\numberwithin{equation}{section}
\newtheorem{theorem}[equation]{Theorem}
\newtheorem{proposition}[equation]{Proposition}
\newtheorem{lemma}[equation]{Lemma}
\newtheorem{corollary}[equation]{Corollary}
\theoremstyle{definition}
\newtheorem{rmk}[equation]{Remark}
\newenvironment{remark}[1][]{\begin{rmk}[#1] \pushQED{\qed}}{\popQED \end{rmk}}
\newtheorem{eg}[equation]{Example}
\newtheorem{defn}[equation]{Definition}
\newenvironment{subeqns}[1][]{\addtocounter{equation}{-1}
\begin{subequations}

}{\end{subequations}}
\newcommand{\bA}{\mathbf{A}}
\newcommand{\bC}{\mathbf{C}}
\newcommand{\cC}{\mathcal{C}}
\newcommand{\sC}{\mathscr{C}}
\newcommand{\cE}{\mathcal{E}}
\newcommand{\rE}{\mathrm{E}}
\newcommand{\bF}{\mathbf{F}}
\newcommand{\bG}{\mathbf{G}}
\newcommand{\rH}{\mathrm{H}}
\newcommand{\rJ}{\mathrm{J}}
\newcommand{\bK}{\mathbf{K}}
\newcommand{\cL}{\mathcal{L}}
\newcommand{\cM}{\mathcal{M}}
\newcommand{\sM}{\mathscr{M}}
\newcommand{\cO}{\mathcal{O}}
\newcommand{\bP}{\mathbf{P}}
\newcommand{\cP}{\mathcal{P}}
\newcommand{\bQ}{\mathbf{Q}}
\newcommand{\rR}{\mathrm{R}}
\newcommand{\cV}{\mathcal{V}}
\newcommand{\cX}{\mathcal{X}}
\newcommand{\bZ}{\mathbf{Z}}
\newcommand{\fe}{\mathfrak{e}}
\newcommand{\re}{\mathrm{e}}
\newcommand{\fh}{\mathfrak{h}}
\newcommand{\rh}{\mathrm{h}}
\newcommand{\bk}{\mathbf{k}}
\newcommand{\fm}{\mathfrak{m}}
\renewcommand{\phi}{\varphi}
\renewcommand{\emptyset}{\varnothing}
\newcommand{\eps}{\varepsilon}
\renewcommand{\tilde}[1]{\widetilde{#1}}
\newcommand{\ol}[1]{\overline{#1}}
\newcommand{\arxiv}[1]{\href{http://arxiv.org/abs/#1}{{\tt arXiv:#1}}}
\def\Ddots{\mathinner{\mkern1mu\raise\p@
\vbox{\kern7\p@\hbox{.}}\mkern2mu
\raise4\p@\hbox{.}\mkern2mu\raise7\p@\hbox{.}\mkern1mu}}
\DeclareMathOperator{\im}{image}
\renewcommand{\hom}{\operatorname{Hom}}
\DeclareMathOperator{\rank}{rank}
\DeclareMathOperator{\Aut}{Aut}
\DeclareMathOperator{\ad}{ad}
\DeclareMathOperator{\Spec}{Spec}
\DeclareMathOperator{\Pic}{Pic}
\newcommand{\GL}{\mathbf{GL}}
\newcommand{\SL}{\mathbf{SL}}
\newcommand{\Sp}{\mathbf{Sp}}
\newcommand{\Gr}{\mathbf{Gr}}
\newcommand{\fsl}{\mathfrak{sl}}
\newcommand{\Flag}{\mathrm{Flag}}
\DeclareMathOperator{\Sel}{Sel}
\newcommand{\ks}{\bk^{\rm sep}}
\newcommand{\PGL}{\mathbf{PGL}}
\newcommand{\SU}{\mathrm{SU}}
\newcommand{\bmu}{\boldsymbol{\mu}}
\newcommand{\df}[1]{{\bf \textsf{#1}}}
\newcommand{\st}{\mathrm{st}}
\title[$\bigwedge^3(9)$ and genus $2$ curves]{Invariant theory of $\bigwedge^3(9)$ and genus $2$ curves}
\date{July 24, 2017}
\author{Eric M. Rains}
\address{Department of Mathematics, California Institute of Technology}
\email{rains@caltech.edu}
\author{Steven V Sam}
\address{Department of Mathematics, University of Wisconsin, Madison}
\email{svs@math.wisc.edu}
\thanks{SS was partially supported by a Miller research fellowship and NSF DMS-1500069.}
\subjclass[2010]{%
14H60, 
15A72
}
\begin{document}

\maketitle

\begin{abstract}
Previous work established a connection between the geometric invariant theory of the third exterior power of a 9-dimensional complex vector space and the moduli space of genus 2 curves with some additional data. We generalize this connection to arbitrary fields, and describe the arithmetic data needed to get a bijection between both sides of this story.
\end{abstract}

\section{Introduction}

This paper is a companion to our previous paper \cite{coble}. We begin by briefly recalling what was done there. Given a genus $2$ curve $C$ over a field $\bk$, let $\SU_3(C)$ be the coarse moduli space of rank $3$ semistable vector bundles on $C$. It admits a degree $2$ map $\SU_3(C) \to \bP^8$ which is branched along a sextic hypersurface. Remarkably, the singular locus of the projective dual of this sextic is a surface which is isomorphic to the Jacobian of $C$ over the algebraic closure of $\bk$. This story has been developed over algebraically closed fields of characteristic $0$ in \cite{ortega, minh} and connected to the invariant theory of the action of $\SL_9(\bk)$ on $\bigwedge^3 \bk^9$ in \cite{GS, GSW}. In \cite{coble}, the setting is generalized to arbitrary fields, and the purpose of this paper is to extend the invariant-theoretic aspects.

More precisely, let $V$ be a $9$-dimensional vector space over an arbitrary field $\bk$ and consider the action of $\SL(V)$ on $\bigwedge^3 V$. Given a stable (in the sense of geometric invariant theory) element $\gamma \in \bigwedge^3 V$, we generalize the constructions in \cite{GSW, GS} to produce:
\begin{itemize}
\item a genus $2$ curve $C$ with a Weierstrass point $P \in C(\bk)$ and
\item a cubic hypersurface in $\bP(V^*)$ whose singular locus is a smooth surface $X$, 
\end{itemize}
such that $X$ is isomorphic to the Jacobian $\rJ(C)$ of $C$ over the algebraic closure of $\bk$. In fact, we also get some interesting arithmetic data:
\begin{itemize}
\item a $3$-covering $X \to \rJ(C)$ which becomes the multiplication by $3$ map over $\ol{\bk}$, i.e., an element in $\rH^1(\bk; \rJ(C)[3])$; furthermore, it lies in the kernel of a map $\rH^1(\bk; \rJ(C)[3]) \to \rH^1(\bk; \SL_9/\bmu_3)$.
\end{itemize}
Conversely, given this data, we show how to construct a stable element in $\bigwedge^3 V$ (which is only well-defined up to scalar multiple and the action of $\SL(V)$). A bulk of the work in this paper is to show that these two constructions are inverse to one another.

Our work is partially motivated by recent work in ``arithmetic invariant theory'' (see \cite{AIT} for example). One goal is to count arithmetic objects of interest, and the first step in many of these cases is to parametrize them by orbits in a linear space. This first step is achieved here; when $\bk$ is a global field, we show that the $3$-Selmer group of $\rJ(C)$ is a subgroup of the kernel of $\rH^1(\bk; \rJ(C)[3]) \to \rH^1(\bk; \SL_9/\bmu_3)$, so that, in fact, they are parametrized by special kinds of orbits in $\bigwedge^3 V$. Following the analogies of previous work in the area, we may hope to count the average size of this $3$-Selmer group using $\bigwedge^3 V$.

Here is a brief overview of the contents. In \S\ref{sec:invt-theory}, we work out the aspects of the invariant theory of $\bigwedge^3 V$ which are needed in the rest of the paper. In \S\ref{sec:3covers}, we generalize the construction of \cite{GS, GSW} to arbitrary fields, i.e., produce the data above starting from a stable element $\gamma$. In \S\ref{sec:trivector}, we provide a construction in the reverse direction: starting from the data above, we produce a stable element $\gamma$, and in \S\ref{sec:bijection}, we show that these two constructions are inverse to one another. Finally, in \S\ref{sec:complements}, we discuss a few additional topics: Selmer groups, ordinary curves, and an explicit model for the $3$-torsion of $\rJ(C)$ given $\gamma$ above.

\subsection{Notation}

$\bk$ is a field and $R$ is a complete discrete valuation ring (DVR from now on) of characteristic $0$ whose residue field is $\bk$; the quotient field of $R$ is denoted $\bK$. Write $\ks$ for a separable closure of $\bk$.

If $G$ is a group scheme defined over $\bk$, we let $\rH^*(\bk; G)$ denote the flat cohomology of $G$. When $G$ is smooth, this coincides with the Galois cohomology of $G$, but we will not have any use for Galois cohomology of non-smooth group schemes.

\section{Invariant theory preliminaries} \label{sec:invt-theory}

\subsection{Geometric invariant theory review}

Let $G$ be a reductive group acting linearly on a vector space $V$. A point $u \in V$ is \df{stable} if its stabilizer subgroup in $G$ is finite and its orbit is closed, and it is \df{semistable} if $0$ is not in the closure of its orbit. If $u$ is not semistable, then it is \df{unstable}. Hence, an element is ``non-stable'' if it is unstable or if it is semistable, but not stable.

The Hilbert--Mumford criterion says that $u$ is stable if and only if $\lim_{t \to 0} \rho(t).u$ does not exist for any $1$-parameter subgroup $\rho \colon \bG_m \to G$, and that $u$ is semistable if and only if $\lim_{t \to 0} \rho(t).u$ does not exist, or it is nonzero whenever the limit exists. Note that we will work over arbitrary fields, but one must consider all $1$-parameter subgroups which are defined over the algebraic closure.

The set of unstable points form a Zariski closed set, and is the zero locus of all positive degree $G$-invariant homogeneous polynomials on $V$. Similarly, the set of non-stable points form a Zariski closed set. Finally, two points $x,y \in V$ are \df{S-equivalent} if $f(x) = f(y)$ for all homogeneous $G$-invariant polynomials $f$ on $V$, and they are \df{projectively S-equivalent} if $\alpha x$ is S-equivalent to $y$ for some $\alpha \ne 0$. If $x$ and $y$ are S-equivalent semistable points, then their orbit closures have a semistable point in common. Furthermore, the orbit closure of any semistable point $x$ contains a unique closed orbit of semistable points, and if $x$ is not stable, then neither are the points of this closed orbit.

Let $V_9$ denote a vector space of dimension $9$ with basis $e_1, \dots, e_9$. The group $G = \SL(V_9)/\bmu_3$ acts on $\bigwedge^3(V_9)$, and the invariant theory of this representation is the main focus of this paper (see also \cite{EV} for earlier work). It has a natural basis of monomials $e_i \wedge e_j \wedge e_k$ (with $1 \le i < j < k \le 9$), and we will use $[ijk]$ as shorthand for this monomial.

For the following, see \cite{GGR}.

\begin{proposition} \label{prop:S-equiv}
Over an algebraically closed field, every element of $\bigwedge^3(V_9)$ is
S-equivalent to an element $\gamma_c$ of the form
\begin{align*}
[267]+[258]+[348]+[169]+[357]+[249]+[178]+[456]\\
-c_3 [257]
-c_6 [247]
+c_9 [148]
-c_{12} [147]
+c_{15} [235]
+c_{18} [145]
+c_{24} [134]
+c_{30} [123],
\end{align*}
where if $2$ is invertible we may take $c_3=c_9=c_{15}=0$ and if $5$ is
invertible we may take $c_6=0$.  Two such elements are projectively
S-equivalent if and only if the corresponding pairs $(C_c,P_c)$ are isomorphic, where $C_c$ is the curve
\[
C_c: x^2+z^5 + c_3 x z^2+ c_6 z^4
+ c_9 x z + c_{12} z^3+ c_{15} x + c_{18} z^2  + c_{24} z + c_{30} = 0,
\]
and $P_c$ is the point at infinity.
\end{proposition}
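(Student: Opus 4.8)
The plan is to recognize the elements $\gamma_c$ as an explicit transversal for the $G$-action on $\bigwedge^3 V_9$ — essentially a Kostant-type section for the Vinberg $\theta$-group attached to the $\bZ/3$-grading of $\fe_8$, with $\gamma_0$ (the sum of the eight ``fixed'' monomials) playing the role of a regular nilpotent and the coordinates $c_i$ cutting out the centralizer direction — and then to track how the two equivalence relations in play interact with this transversal: projective $S$-equivalence of trivectors on one side, and isomorphism of pointed Weierstrass models $(C_c,P_c)$ on the other. Let $\Gamma\subset\GL(V_9)\times\bG_m$ be the subgroup whose action on $\bigwedge^3 V_9$ carries the affine family $\{\gamma_c\}$ into itself; I expect $\Gamma$ to be precisely a lift of the group of admissible substitutions $z\mapsto az+b$, $x\mapsto dx+(\text{quadratic in }z)$ of $C_c$ that fix the point $P_c$ at infinity. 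With this in place the Proposition splits into three pieces: (a) $c\mapsto\gamma_c$ is surjective onto $S$-equivalence classes; (b) $\gamma_c$ and $\gamma_{c'}$ are projectively $S$-equivalent if and only if $c'\in\Gamma\cdot c$; and (c) $\Gamma\cdot c=\Gamma\cdot c'$ if and only if $(C_c,P_c)\cong(C_{c'},P_{c'})$.

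For (a) I would first invoke the GIT facts recalled above: every $u\in\bigwedge^3 V_9$ is $S$-equivalent to a point of the unique closed $G$-orbit inside $\ol{Gu}$, and over an algebraically closed field of characteristic $0$ these closed-orbit representatives are exactly the semisimple elements of the $\theta$-group, all conjugate into a fixed Cartan subspace $\fc$ with $\bigwedge^3 V_9\git G\cong\fc/W$ for the little Weyl group $W$ (the content of \cite{GGR}, following Vinberg and Elashvili). It then remains to verify that the explicit family $\{\gamma_c\}$ surjects onto the $4$-dimensional quotient $\bigwedge^3 V_9\git G$ — equivalently, that the generating invariants restrict to a complete system of coordinates on it — which is a finite explicit computation. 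The reductions $c_3=c_9=c_{15}=0$ (when $2$ is invertible) and $c_6=0$ (when $5$ is invertible) I would read off from the substitutions $x\mapsto x-\tfrac12(c_3z^2+c_9z+c_{15})$ and $z\mapsto z-\tfrac{c_6}{5}$ on $C_c$, after checking that these are induced by elements of $\Gamma$ acting on the trivector.

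For (b) and (c): the implication from $(C_c,P_c)\cong(C_{c'},P_{c'})$ to projective $S$-equivalence follows once every admissible transformation of the pointed Weierstrass model is realized by an element of $\GL(V_9)\times\bG_m$ taking $\gamma_c$ to a scalar multiple of $\gamma_{c'}$ — this is the explicit trivector/curve dictionary, mostly bookkeeping in the basis $e_1,\dots,e_9$. For the converse, projectively $S$-equivalent $\gamma_c,\gamma_{c'}$ have the same $G$-invariants up to the weighted $\bG_m$-action, hence by (a) lie in a single $\Gamma$-orbit on $\{\gamma_c\}$; and to identify $\Gamma$-orbits with isomorphism classes of $(C,P)$ I would show that the Igusa-type invariants of $C_c$ and the $G$-invariants of $\gamma_c$ determine one another, i.e.\ that $\{\gamma_c\}/\Gamma$ maps isomorphically onto the coarse moduli space of genus-$2$ curves equipped with a marked Weierstrass point — once more a comparison of two $3$-dimensional weighted-polynomial rings.

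The hard part will be positive characteristic, in particular the bad primes $2$, $3$, $5$: Vinberg's structure theory (polynomiality of the invariant ring, semisimplicity of closed orbits, conjugacy of Cartan subspaces) is a characteristic-$0$ input, and $2$ and $5$, together with the characteristic $3$ equal to the order of the grading group $\bZ/3$, are precisely where it can degenerate. Since the family $\{\gamma_c\}$, the invariants, and the curve $C_c$ are all defined over $\bZ$, the natural route is to establish (a)--(c) over $\bZ[1/6]$ by reduction to characteristic $0$, and then to treat the residual primes by lifting a given trivector over $\bk$ to the complete DVR $R$ of the Notation section, arguing over the characteristic-$0$ field $\bK$, and specializing back to $\bk$; the extra coordinates $c_3,c_6,c_9,c_{15}$ have to be kept because the normalizing substitutions above are unavailable in characteristics $2$ and $5$. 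Arranging the spreading-out and specialization so that they respect $S$-equivalence, and not merely the numerical invariants, is the delicate point.
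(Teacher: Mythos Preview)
The paper does not give a self-contained proof of this proposition: it simply cites \cite{GGR} (work in preparation). What the paper does supply is the Remark immediately following, which isolates the one implication actually used downstream---namely, that isomorphic pairs $(C_c,P_c)\cong(C_{c'},P_{c'})$ yield projectively equivalent $\gamma_c,\gamma_{c'}$---and observes that this is a direct computation: every isomorphism of pointed models has the form $(x,z)\mapsto(\alpha^5 x+b_3z^2+b_9z+b_{15},\,\alpha^2 z+b_6)$, and one verifies that the corresponding substitution on trivectors is realized by an \emph{upper-triangular} element of $\GL_9$. This is exactly your ``easy direction'' in (b)/(c), and the paper's point is that the upper-triangular ansatz makes it a finite symbolic check rather than a search in $\GL_9$. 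The Remark further notes that, for stable $\gamma$, the remaining claims of the proposition are \emph{consequences} of the paper's later bijection (Theorem~\ref{thm:genus2-param}) rather than inputs to it; so the paper in effect gives an a posteriori proof for the stable locus via the geometric construction of \S\S\ref{sec:3covers}--\ref{sec:bijection}, bypassing Vinberg theory entirely.

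Your Kostant-section strategy is a reasonable outline of what a direct proof (presumably the one in \cite{GGR}) would look like, and your identification of the $c_i$-family as a slice transverse to the regular nilpotent $\gamma_0$ is the right picture. Two places where the sketch would need tightening: first, in your step (b) the inference ``same invariants, hence by (a) lie in a single $\Gamma$-orbit'' is not justified by surjectivity alone---you need the separate fact that $\{\gamma_c\}/\Gamma\to\bigwedge^3 V_9\git G$ is \emph{injective}, which is what the invariant comparison in your (c) is really doing, so the logic of (b) and (c) is entangled rather than sequential. Second, the spreading-out base should avoid all of $2,3,5$ if you want the Vinberg machinery to apply cleanly (you wrote $\bZ[1/6]$); and at the residual primes the statement about S-equivalence classes (not just stable orbits) cannot be obtained by the DVR lifting you describe, since S-equivalence is about closed orbits in the special fiber and these need not lift to closed orbits generically---this is the genuinely delicate point you flag at the end, and it is precisely why the paper defers to \cite{GGR} rather than attempting it here.
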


\begin{remark}
Above, we see that projective S-equivalence classes classify pairs $(C,P)$ where $C$ is a genus $2$ curve and $P \in C(\bk)$ is a rational Weierstrass point. In fact, one can show that the S-equivalence classes themselves classify triples $(C,P,\phi)$ where $\phi \colon \omega_C\otimes \cO_P\cong \cO_P$ specifies a nonzero tangent vector at $P$. We omit the details, as we have not been able to figure out how to build $\phi$ into the construction below, and can thus only work at the level of projective S-equivalence. 
\end{remark}

\begin{remark}
The only way in which the results below will logically depend on Proposition~\ref{prop:S-equiv} is the claim that elements corresponding to isomorphic $(C_c,P_c)$ pairs are projectively equivalent.  This is quite easy to check computationally: any isomorphism of pairs has the form
\[
(x,z)\mapsto (\alpha^5 x + b_3 z^2 + b_9 z + b_{15},\alpha^2 z + b_6),
\]
and it is easy to find an equivalence between the corresponding trivectors given the ansatz that the element of $\GL_9$ be upper-triangular.  As for the other claims of Proposition~\ref{prop:S-equiv}, not only will they not be used below, but in fact for stable $\gamma$, they are easy consequences of our results!  (Of course, this is more in the nature of a proof, while \cite{GGR} gives an actual derivation.)
\end{remark}

\begin{proposition}\label{prop:homogeneity}
If $F$ be an $\SL_9$-invariant section of $\cO_{\bP(\bigwedge^3(V_9))}(d)$. Then $F(\gamma_c)$ is a homogeneous polynomial of degree $d$ in $c_3, \dots, c_{30}$, with $\deg(c_i)=i$.
\end{proposition}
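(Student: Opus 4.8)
The plan is to realize the substitution $c_i \mapsto s^i c_i$ on $\gamma_c$ by a one-parameter subgroup of $\GL(V_9)$ and then play this off against the $\SL(V_9)$-invariance and degree-$d$ homogeneity of $F$. Regard $F$ as a homogeneous polynomial of degree $d$ on $\bigwedge^3(V_9)$ that is invariant under $\SL(V_9)$ (an invariant section of $\cO_{\bP(\bigwedge^3(V_9))}(d)$ is precisely such a polynomial). Since $\gamma_c$ depends affine-linearly on $(c_3,\dots,c_{30})$, the expression $F(\gamma_c)$ is automatically a polynomial in these variables of ordinary degree at most $d$; the content of the proposition is the weighted-homogeneity refinement.

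The key step is to introduce the cocharacter $\rho\colon\bG_m\to\GL(V_9)$ given by $\rho(s)=\diag(s^{21},s^{15},s^{12},s^{9},s^{6},s^{3},1,s^{-3},s^{-6})$, so that $\rho(s)$ scales the basis monomial $[ijk]$ by $s^{a_i+a_j+a_k}$ with $(a_1,\dots,a_9)=(21,15,12,9,6,3,0,-3,-6)$. One checks directly that each of the eight background monomials $[267],[258],[348],[169],[357],[249],[178],[456]$ has $a_i+a_j+a_k=18$, while the monomial multiplying $c_i$ has $a_i+a_j+a_k=18+i$ for each $i\in\{3,6,9,12,15,18,24,30\}$. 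Hence $\rho(s)\cdot\gamma_c=s^{18}\gamma_{c'}$ where $c'_i=s^i c_i$. Producing the weight vector $(a_j)$ is the one genuine input: it is the solution of the linear system demanding that the background monomials acquire a common weight, and the fact that the remaining eight monomials then automatically receive weights in the arithmetic progression $18+i$ is exactly what forces the grading $\deg(c_i)=i$. I expect this bookkeeping, rather than any conceptual difficulty, to be the only point requiring care.

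To finish, observe that $\SL(V_9)$-invariance together with homogeneity of degree $d$ forces every monomial occurring in $F$ to involve each of the nine basis directions $e_1,\dots,e_9$ the same number of times, namely $d/3$ times (in particular $3\mid d$, for otherwise $F=0$ and there is nothing to prove): this is immediate from invariance under the diagonal torus of $\SL(V_9)$ together with a count of index occurrences. Consequently, for every diagonal element $\diag(s^{a_1},\dots,s^{a_9})\in\GL(V_9)$ one has $F(\diag(s^{a_j})\cdot\gamma)=s^{(d/3)\sum_j a_j}\,F(\gamma)$ for all $\gamma$. Applied to $\rho$, for which $\sum_j a_j=57$, this gives $F(\rho(s)\cdot\gamma)=s^{19d}F(\gamma)$; on the other hand $F(\rho(s)\cdot\gamma_c)=F(s^{18}\gamma_{c'})=s^{18d}F(\gamma_{c'})$ by homogeneity. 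Comparing the two yields $F(\gamma_{c'})=s^{d}F(\gamma_c)$, i.e.\ replacing each $c_i$ by $s^i c_i$ multiplies $F(\gamma_c)$ by $s^d$, which is exactly the asserted homogeneity. The one subtlety to keep straight is the cube introduced by $\bigwedge^3$ (a scalar $\lambda$ on $V_9$ acts on $\bigwedge^3(V_9)$ by $\lambda^3$); it is this, together with $57=3\cdot 19$ and the background weight $18$, via $19d-18d=d$, that makes the degree come out to be exactly $d$ rather than some other multiple of it.
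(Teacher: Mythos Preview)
Your proof is correct and follows essentially the same approach as the paper: both exhibit a one-parameter subgroup of $\GL(V_9)$ that realizes $c_i\mapsto s^i c_i$ on $\gamma_c$ and then invoke the transformation law $F(g\gamma)=\det(g)^{d/3}F(\gamma)$. Your weight vector $(21,15,12,9,6,3,0,-3,-6)$ differs from the paper's $(15,9,6,3,0,-3,-6,-9,-12)$ only by a global shift of $6$ (i.e.\ a central scalar), so the background weight $18$ and the sum $57$ in your computation play the roles of $0$ and $3$ in the paper's, with the same final answer.
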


\begin{proof}
The $1$-parameter subgroup of $\GL_9$ of weight $(15, 9, 6 , 3 ,0,-3, -6,-9,-12)$ preserves the space of elements $\gamma_c$ and acts on each $c_i$ by $t^i$.  Since $F$ is an $\SL_9$-invariant of degree $d$, $F(g\gamma) = \det(g)^{d/3} F(\gamma)$ for any $g\in \GL_9$, and thus the $1$-parameter subgroup multiplies $F$ by $t^d$, so that $F(\gamma_{t\cdot c}) = t^d F(\gamma_c)$, implying the desired homogeneity.
\end{proof}

\subsection{Cartan subspaces} \label{ss:cartan}

Assume the characteristic of $\bk$ is different from $3$.
Let $G = \SL(V_9) / \bmu_3$ and let $\fe_8$ be the split Lie algebra of type $\rE_8$ and let $\Gamma$ be its simply-connected group. We have a $\bZ/3$-graded decomposition
\begin{align} \label{eqn:e8-decomp}
\fe_8 = \fsl(V_9) \oplus \bigwedge^3 V_9 \oplus \bigwedge^6 V_9.
\end{align}
The decomposition \eqref{eqn:e8-decomp} corresponds to an order $3$ automorphism $\theta$ of $\Gamma$ such that $G = \Gamma^\theta$ and $\bigwedge^3 V_9$ is one of the nontrivial eigenspaces of $\theta$ acting on $\fe_8$. More explicitly, pick a set of simple roots $\alpha_1,\dots,\alpha_8$ for the root system of $\fe_8$. Then the height of a root is the sum of its coefficients when expressed as a sum of the $\alpha_i$, and the $\bZ/3$-grading comes from taking the height modulo $3$.

The $4$ dimensional subspace $\fh$ of $\bigwedge^3 V_9$ spanned by 
\begin{equation} \label{eqn:cartan}
\begin{split}
[123]+[456]+[789], \qquad [147]+[258]+[369],\\ 
[159]+[267]+[348], \qquad [168]+[249]+[357],
\end{split}
\end{equation}
is the \df{standard Cartan subspace}. It may be helpful to visualize this in terms of the finite geometry $\bP_{\bF_3}^2$, namely, each basis vector is a sum over all lines in a direction of the following table:
\[
\begin{array}{ccc}
1&2&3\\
4&5&6\\
7&8&9
\end{array}
\]
This carries the action of the Weyl group $W = N(\fh) / Z(\fh)$ (normalizer modulo centralizer).

\begin{proposition} \label{prop:chevalley}
If $\bk$ has characteristic $0$, the restriction map
\[
\bk[\bigwedge^3 V_9]^G \xrightarrow{\cong} \bk[\fh]^W
\]
is an isomorphism, and both are polynomial rings generated by elements of degrees $12, 18, 24, 30$.
\end{proposition}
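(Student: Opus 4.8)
The plan is to recognize Proposition~\ref{prop:chevalley} as the Chevalley restriction theorem in Vinberg's theory of $\theta$-groups, applied to the $\bZ/3$-grading \eqref{eqn:e8-decomp}, and then to descend from $\ol\bk$ to $\bk$ by flat base change. Over $\ol\bk$ one first checks that the span $\fh$ of the four vectors in \eqref{eqn:cartan} really is a Cartan subspace of $\bigwedge^3 V_9$: a maximal subspace consisting of commuting semisimple elements of $\fe_8$. Concretely this means verifying that the four vectors bracket to zero inside $\bigwedge^6 V_9$ and are semisimple, with maximality then being a dimension count. This (the ``standard Cartan subspace'') and the structure of the little Weyl group $W = N(\fh)/Z(\fh) \subseteq \GL(\fh)$ are worked out for $\bigwedge^3 V_9$ in \cite{EV} (see also \cite{GGR}): $W$ is the exceptional complex reflection group number $32$ in the Shephard--Todd list, of order $155520 = 12\cdot 18\cdot 24\cdot 30$, with fundamental degrees $12, 18, 24, 30$. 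Vinberg's theorems then give, in one stroke, that the restriction map $\ol\bk[\bigwedge^3 V_9]^G \to \ol\bk[\fh]^W$ is an isomorphism, and---$W$ being generated by pseudo-reflections---that $\ol\bk[\fh]^W$ is a polynomial ring, on generators of degrees $12, 18, 24, 30$. (These degrees are also consistent with Proposition~\ref{prop:homogeneity} applied to the normal form $\gamma_c$, which in characteristic $0$ carries precisely the four parameters $c_{12}, c_{18}, c_{24}, c_{30}$ of the respective degrees.)

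For an arbitrary field $\bk$ of characteristic $0$ I would then descend. The subspace $\fh$ is defined over $\bQ$ (indeed over $\bZ$) by the explicit generators \eqref{eqn:cartan}, the group $G = \SL(V_9)/\bmu_3$ is split, and $W = N(\fh)/Z(\fh)$ is a finite group scheme over $\bk$; all of these objects, together with the restriction map, arise by base change from fixed $\bQ$-forms. Formation of $G$-invariants and of $W$-invariants commutes with the flat extension $\bk \to \ol\bk$ and is compatible with restriction, so the restriction map over $\bk$ becomes an isomorphism after $-\otimes_\bk \ol\bk$, hence is an isomorphism by faithfully flat descent. Moreover $\bk[\fh]^W \otimes_\bk \ol\bk \cong \ol\bk[\fh]^W$ is a polynomial ring on generators of degrees $12, 18, 24, 30$; for a finitely generated connected graded $\bk$-algebra $S$, being such a polynomial ring is detected by the Hilbert series of $S$ together with the numbers $\dim_\bk (S_+/S_+^2)_d$ (these pin down a minimal presentation and whether it has no relations), and both are unchanged under faithfully flat base change. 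Hence $\bk[\fh]^W$, and therefore $\bk[\bigwedge^3 V_9]^G$, is a polynomial ring with generators of degrees $12, 18, 24, 30$.

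The one genuine input is Vinberg's theorem---both the isomorphism of invariant rings and the fact that $W$ acts on $\fh$ as a pseudo-reflection group---which I would cite rather than reprove. If one wanted a self-contained argument, the real work would lie in checking that $\fh$ is a Cartan subspace and in determining $W$ (and hence the degrees) explicitly; by contrast the Chevalley--Shephard--Todd input and the base-change bookkeeping are routine. I therefore expect the explicit identification of $W$, rather than the descent, to be the main obstacle in a from-scratch treatment.
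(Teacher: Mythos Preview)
Your proposal is correct and follows essentially the same approach as the paper, which simply cites Vinberg \cite[Theorem~7 and \S 9]{vinberg} for both the isomorphism and the degrees. You are more explicit than the paper in identifying $W$ with Shephard--Todd group $32$ and in spelling out the descent from $\ol\bk$ to an arbitrary characteristic-$0$ field $\bk$, a point the paper leaves implicit.
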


\begin{proof}
See \cite[Theorem 7]{vinberg} for the isomorphism, and see \cite[\S 9]{vinberg} for the degrees of the invariants.
\end{proof}

When $\bk = \bC$, the quotient space $\fh/W$ is classically known to parametrize genus $2$ curves together with a choice of Weierstrass point, see \cite[\S 4]{dolgachev-lehavi}.

$W$ is a complex reflection group (the reflections have order 3), and there are 40 reflection hyperplanes. With respect to the $4$ basis vectors in \eqref{eqn:cartan} for the standard Cartan subspace, the matrix representation of the reflection group in characteristic $0$ is given in \cite[\S 3.1]{GS}. Each reflection hyperplane is in the orbit of the hyperplane spanned by the first $3$ basis vectors (see \cite[Table 1]{GS}). As an abstract finite group, we have an isomorphism $W \cong \bZ/3 \times \Sp_4(\bF_3)$.

\begin{lemma} \label{lem:closedorbit}
Suppose $\bk$ has characteristic $0$. If $x$ is semistable and $Gx$ is closed, then $Gx \cap \fh \ne 0$.
\end{lemma}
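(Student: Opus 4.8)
The statement is geometric in nature: closedness of the orbit $Gx$ and the condition $Gx \cap \fh \ne 0$ are both insensitive to extending scalars to $\ol{\bk}$ (an orbit is closed over $\bk$ if and only if it is closed over $\ol{\bk}$, and $\fh$ is defined over the prime field), so I would reduce at once to the case $\bk = \ol{\bk}$. Since $x$ is semistable, some positive-degree $G$-invariant is nonzero on $x$; hence $x \ne 0$ and $0 \notin Gx$, and it suffices to exhibit any point of $\fh$ lying in $Gx$.

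The plan is to compare $x$ with a point of $\fh$ through the affine GIT quotient $\pi\colon \bigwedge^3 V_9 \to Q := \Spec \bk[\bigwedge^3 V_9]^G$. By Proposition~\ref{prop:chevalley} the restriction map $\bk[\bigwedge^3 V_9]^G \to \bk[\fh]^W$ is an isomorphism, so the composite $\fh \hookrightarrow \bigwedge^3 V_9 \xrightarrow{\pi} Q$ is, on spectra, the finite quotient map $\fh \to \fh \git W$; in particular it is surjective on points. Hence there is $h \in \fh$ with $\pi(h) = \pi(x)$, i.e. $h$ and $x$ take the same value under every homogeneous $G$-invariant, so they are $S$-equivalent. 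Now I would invoke Vinberg's theory of $\bZ/3$-graded Lie algebras \cite{vinberg}: $\fh$ is a Cartan subspace for the $\theta$-representation $\bigwedge^3 V_9$ attached to the grading \eqref{eqn:e8-decomp}, so every element of $\fh$ is a semisimple element of $\fe_8$, and a semisimple element of $\bigwedge^3 V_9 \subset \fe_8$ has closed $G$-orbit. Thus $Gh$ is closed; in particular $0 \notin Gh$ (again by invariant non-vanishing, as $h \ne 0$), so $h$ is semistable.

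Finally, $x$ and $h$ are $S$-equivalent semistable points, so by the review of geometric invariant theory above their orbit closures share a semistable point, and the orbit closure of that point lies in $\ol{Gx}\cap\ol{Gh}$ and contains a semistable closed orbit $O$ inside both closures. Since $\ol{Gx} = Gx$ is itself a closed orbit, uniqueness of the closed orbit in $\ol{Gx}$ forces $O = Gx$; likewise $O = Gh$. Therefore $Gx = Gh$, and $h \in Gx \cap \fh$ is the desired point. The one nonformal ingredient is the assertion that every element of the Cartan subspace $\fh$ has closed $G$-orbit; this is the heart of the matter and is precisely where characteristic $0$ and the embedding into $\fe_8$ are used (it is the $\theta$-group analogue of ``semisimple elements have closed orbits'', via the fact that a Cartan subspace is contained in a $\theta$-stable Cartan subalgebra of $\fe_8$), and I would cite \cite{vinberg} for it rather than reprove it.
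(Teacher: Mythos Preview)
Your argument is correct. The paper's own proof is simply a two-line citation of Vinberg: Proposition~4 of \cite{vinberg} (an element of $\bigwedge^3 V_9$ has closed $G$-orbit if and only if it is semisimple in $\fe_8$) together with the Corollary of Theorem~1 (every semisimple element is $G$-conjugate to an element of a Cartan subspace). So the paper goes directly: $Gx$ closed $\Rightarrow$ $x$ semisimple $\Rightarrow$ $x$ is conjugate into $\fh$.

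Your route is a legitimate variant: rather than invoking ``closed $\Rightarrow$ semisimple'', you use the Chevalley isomorphism (Proposition~\ref{prop:chevalley}, itself Vinberg's Theorem~7) to produce a point $h\in\fh$ with the same invariants as $x$, then cite Vinberg only for the converse direction (elements of $\fh$ are semisimple, hence have closed orbit), and finish with the GIT uniqueness of the closed orbit in a fiber of the quotient map. This trades one Vinberg citation for another plus a standard GIT fact already recorded in the paper. Both arguments live entirely inside Vinberg's $\theta$-group framework and are of comparable depth; yours has the mild advantage of making the role of the invariant-theory quotient explicit, while the paper's is shorter because the implication ``closed orbit $\Rightarrow$ conjugate into $\fh$'' is exactly what Vinberg packages.
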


\begin{proof}
Combine \cite[Proposition 4]{vinberg} and \cite[Corollary of Theorem 1]{vinberg}.
\end{proof}

\begin{proposition} \label{prop:refl-posdim}
Any element of a reflection hyperplane in the standard Cartan subspace has a positive-dimensional stabilizer subgroup in $G$.
\end{proposition}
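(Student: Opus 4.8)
The plan is to prove a slightly stronger statement: that a fixed reflection hyperplane $H \subseteq \fh$ is fixed \emph{pointwise} by a two-dimensional subtorus of $G$, and then to transport this to every reflection hyperplane using that $G$ acts on $\fh$ through $W$.

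First I would reduce to a single hyperplane. Since all $40$ reflection hyperplanes lie in one $W$-orbit and $W = N(\fh)/Z(\fh)$ with $N(\fh) \subseteq G$, every reflection hyperplane has the form $gH$ for some $g \in G$, where $H$ denotes the hyperplane spanned by the first three basis vectors in \eqref{eqn:cartan}. Because $\operatorname{Stab}_G(gx) = g\,\operatorname{Stab}_G(x)\,g^{-1}$, it then suffices to exhibit a positive-dimensional subgroup of $G$ fixing $H$ pointwise, as this forces $\operatorname{Stab}_G(x) \ne 0$ for every $x \in H$, hence for every point of every reflection hyperplane.

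To build such a subgroup I would use the combinatorial picture of the Cartan subspace. Identify $\{1,\dots,9\}$ with the nine points of the $3\times 3$ array shown above, i.e.\ with $\bF_3^2$; then the four basis vectors of \eqref{eqn:cartan} are, respectively, the sums of the three parallel lines in the four directions of $\bF_3^2$, and $H$ is the span of three of these directions, the omitted one being the direction whose basis vector is $[168]+[249]+[357]$. A diagonal matrix $\operatorname{diag}(t_1,\dots,t_9) \in \GL_9$ scales the monomial $[ijk]$ by $t_it_jt_k$, so it fixes each of the three spanning vectors of $H$, hence all of $H$, provided $t_it_jt_k = 1$ for each of the nine lines $\{i,j,k\}$ occurring in those three directions. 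For $\lambda,\mu \in \bG_m$ I would take $t_p$ to be $\lambda\mu$, $\lambda^{-1}$, or $\mu^{-1}$ according as the coordinate sum $a+b$ of $p=(a,b)\in\bF_3^2$ is $\equiv 0,1,2 \pmod 3$. Each of the nine relevant lines meets each of the three ``coordinate-sum'' classes exactly once, so $t_it_jt_k = (\lambda\mu)\lambda^{-1}\mu^{-1} = 1$, while $\prod_i t_i = (\lambda\mu)^3\lambda^{-3}\mu^{-3} = 1$; thus $(\lambda,\mu)\mapsto\operatorname{diag}(t_1,\dots,t_9)$ is a homomorphism $\bG_m^2 \to \SL_9$, and it is injective since already the triple $(\lambda\mu,\lambda^{-1},\mu^{-1})$ determines $(\lambda,\mu)$. (Equivalently, the nine line characters span a rank-$7$ sublattice of the character lattice of the diagonal torus, the only relations being that the three lines in each direction add to the all-ones character.) Composing with $\SL_9 \to G = \SL_9/\bmu_3$ therefore produces a two-dimensional subtorus of $G$ fixing $H$ pointwise, which proves the proposition with room to spare.

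The only real content is the bookkeeping in the last paragraph: identifying which nine lines occur, checking that the displayed values $t_p$ kill all of them and have product $1$, and confirming that the resulting torus is genuinely two-dimensional rather than collapsing. This is a short, elementary verification (valid in any characteristic for which $\fh$ and $W$ are defined); the passage between distinct reflection hyperplanes is purely formal.
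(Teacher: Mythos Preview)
Your proof is correct and follows the same route as the paper: reduce via $W$-transitivity to the hyperplane $H$ spanned by the first three basis vectors of \eqref{eqn:cartan}, then exhibit an explicit diagonal torus fixing $H$ pointwise. The paper's one-parameter subgroup $\diag(t^{-2},t,t,t,t,t^{-2},t,t^{-2},t)$ is exactly your diagonal $\lambda=\mu=t^{-1}$, so you have located the full two-dimensional torus containing it (and your coordinate-sum description on $\bF_3^2$ gives a clean reason why it works, the omitted pencil of lines being precisely the level sets of $a+b$); the paper also prefaces its argument with a reduction to characteristic $0$ via a DVR lift and semicontinuity of stabilizer dimension, which your direct construction makes unnecessary.
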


\begin{proof}
In positive characteristic, lift our element over the DVR $R$ to characteristic $0$ and use semicontinuity of stabilizer dimension to reduce the proof to the case of characteristic $0$.

The reflection hyperplanes form a single orbit under the reflection group, so it suffices to consider a single one. From the discussion above, we may assume that this hyperplane is the span of $[123]+[456]+[789], [147]+[258]+[369], [159]+[267]+[348]$. Then for any $t$, the diagonal matrix with entries $(t^{-2}, t, t, t, t, t^{-2}, t, t^{-2}, t)$ stabilizes each of these $3$ basis vectors, and hence any element in this hyperplane. So the stabilizer of any element has positive dimension.
\end{proof}

\begin{proposition} \label{prop:stable-refl}
An element $u$ in the standard Cartan subspace is stable if and only if it does not lie in any reflection hyperplanes.
\end{proposition}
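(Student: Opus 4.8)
The plan is to prove the two directions separately, leveraging the characteristic-$0$ structure and then descending to positive characteristic via the DVR $R$ as in Proposition~\ref{prop:refl-posdim}. For the ``only if'' direction (stable $\Rightarrow$ not on a reflection hyperplane), this is immediate from Proposition~\ref{prop:refl-posdim}: an element on a reflection hyperplane has a positive-dimensional stabilizer in $G$, hence cannot be stable, since stability requires a finite stabilizer. So the real content is the converse.

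For the ``if'' direction, assume $u \in \fh$ lies on no reflection hyperplane; I want to show $u$ is stable, i.e., $Gu$ is closed and $\operatorname{Stab}_G(u)$ is finite. First reduce to characteristic $0$: lift $u$ to $\tilde u$ over $R$; if the generic fiber is stable, then semicontinuity of stabilizer dimension forces the special fiber to have $0$-dimensional stabilizer, and closedness of the orbit should also descend (or one argues directly using Hilbert--Mumford, since the relevant $1$-parameter subgroups are defined over $\ol{\bk}$ and a destabilizing one in the special fiber would lift). So assume $\ch \bk = 0$. For closedness of $Gu$: the categorical quotient $\bigwedge^3 V_9 \git G$ has coordinate ring $\bk[\bigwedge^3 V_9]^G \cong \bk[\fh]^W$ by Proposition~\ref{prop:chevalley}, and $\fh/W$ injects into this quotient. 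A general fact (as recalled in the GIT review) is that each fiber of $\bigwedge^3 V_9 \to \bigwedge^3 V_9 \git G$ contains a unique closed orbit; combined with Lemma~\ref{lem:closedorbit}, the closed orbit in the fiber over the image of $u$ meets $\fh$, so it equals $Wu'$-worth of points for some $u'$ in the fiber. One then needs: the fiber of $\fh \to \fh/W$ through $u$ is a single $W$-orbit (true when $u$ avoids all reflection hyperplanes — this is exactly where freeness of the $W$-action off the hyperplanes is used), and that the closed orbit $Gu'$ with $u' \in \fh$ having the same $W$-invariants as $u$ actually contains $u$. Since $u$ and $u'$ are $W$-conjugate and $W = N_G(\fh)/Z_G(\fh)$, we have $u' \in Gu$, so $Gu = Gu'$ is closed.

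For finiteness of the stabilizer: suppose $\operatorname{Stab}_G(u)$ were positive-dimensional, hence contains a nontrivial $1$-parameter subgroup $\rho\colon \bG_m \to G$ fixing $u$ (over $\ol\bk$). The centralizer of $\rho$ acts on a graded decomposition; the key point is that $\fh$ should be contained in (a conjugate of) the space on which such a $\rho$ acts nontrivially unless $u$ lies on a wall. More precisely, I expect the argument to run: the fixed space of $\rho$ acting on $\bigwedge^3 V_9$ meets $\fh$ in the fixed space of some element (or subgroup) of $W$ acting on $\fh$ — and by the theory of reflection groups, a non-identity element of $W$ fixing $u$ would force $u$ into a reflection hyperplane (since $W$ acts freely off the hyperplanes, any $w \neq 1$ fixing $u$ must be, or have a power that is, a reflection with $u$ in its mirror). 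Translating the ``positive-dimensional stabilizer'' statement into a ``nontrivial $W$-stabilizer'' statement is the crux: one uses that $Z_G(\fh)$ is finite (indeed $\fh$ is a Cartan subspace, so its centralizer is its ``maximal torus'' analog, which in the graded setting is finite modulo the grading torus) and $N_G(\fh)/Z_G(\fh) = W$, so $\operatorname{Stab}_G(u) \cap N_G(\fh)$ maps to $\operatorname{Stab}_W(u)$ with finite kernel. The subtlety is that $\operatorname{Stab}_G(u)$ need not be contained in $N_G(\fh)$; here I would invoke that if $g \in \operatorname{Stab}_G(u)$ then $g$ preserves the (unique, if $u$ is regular) Cartan subspace through $u$ — but $\fh$ is the Cartan subspace through $u$ since $u \in \fh$ and $u$ is regular — so $g \in N_G(\fh)$, as desired. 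Regularity of $u$ here is precisely the condition of avoiding reflection hyperplanes.

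The main obstacle I anticipate is the rigidity statement ``an element $g \in G$ stabilizing a regular $u \in \fh$ must normalize $\fh$,'' which requires knowing that $\fh$ is the \emph{unique} Cartan subspace containing a regular element $u$ — the graded analogue of the fact that a regular semisimple element of a Lie algebra lies in a unique Cartan subalgebra. This should follow from Vinberg's theory of $\theta$-groups (the same source, \cite{vinberg}, cited for Proposition~\ref{prop:chevalley} and Lemma~\ref{lem:closedorbit}), but pinning down the precise statement and checking that ``regular'' in Vinberg's sense coincides with ``off all reflection hyperplanes'' is where the care is needed. An alternative, more hands-on route for the stabilizer finiteness — paralleling Proposition~\ref{prop:refl-posdim}'s explicit diagonal matrix — would be to show directly that the Lie algebra stabilizer $\fg_u = \{X \in \fg : [X,u] = 0\}$ is zero when $u$ avoids the hyperplanes, by a weight computation in the $\rE_8$ grading \eqref{eqn:e8-decomp}; this sidesteps the uniqueness-of-Cartan issue but trades it for an explicit (though routine) linear-algebra verification.
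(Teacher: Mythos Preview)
Your ``only if'' direction matches the paper's exactly. For the ``if'' direction, however, the paper takes a much shorter route than your Vinberg-intrinsic argument: it observes that the Cartan subspace $\fh$ sits inside a full Cartan subalgebra $\ft$ of $\fe_8$, and (citing Elkies) that no root hyperplane of $\ft$ contains $\fh$, so the $40$ reflection hyperplanes of $W$ are precisely the traces on $\fh$ of the $240$ root hyperplanes of $\ft$. Hence $u$ off the $W$-hyperplanes is regular semisimple in $\fe_8$, so its $\Gamma$-centralizer is the maximal torus $T$ with $\Lie(T)=\ft$; since $\ft\cap\fsl(V_9)=0$, the $G$-stabilizer $T\cap G$ is finite, and Hilbert--Mumford (applied to $1$-parameter subgroups of $G$ viewed inside $\Gamma$) gives $G$-stability. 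This is essentially the ``hands-on alternative'' you sketch at the very end --- that is the paper's actual argument, and it works uniformly in characteristic $\ne 3$ with no reduction to characteristic $0$.

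Your primary route is also sound in characteristic $0$, and you correctly isolate the Vinberg input needed (uniqueness of the Cartan subspace through a regular element, equivalence of regularity with avoiding the hyperplanes). But your reduction step contains a slip: semicontinuity of stabilizer dimension goes the wrong way for what you want --- the stabilizer can only \emph{grow} in the special fiber, so ``generic fiber stable $\Rightarrow$ special fiber has finite stabilizer'' is false as stated. Your parenthetical alternative (lift a destabilizing $1$-PS from the special fiber to $R$ after diagonalizing, exactly as in the proof of Proposition~\ref{prop:non-stable-hyper}(a)) is the correct way to descend stability, so the argument can be repaired; but note that the paper's $\fe_8$ approach sidesteps this reduction entirely.
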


\begin{proof}
The standard Cartan subspace is the intersection of a Cartan subalgebra of $\fe_8$ with $\bigwedge^3 V_9$ and none of the reflection hyperplanes of the Cartan subalgebra of $\fe_8$ contain the standard Cartan subspace (this follows from the discussion in \cite[\S 3]{elkies}), so $u$ is contained in the complement of reflection hyperplanes in a Cartan subalgebra of $\fe_8$, which means that it is stable under the action of $\Gamma$. The Hilbert--Mumford criterion implies that $u$ is stable as an element of $\bigwedge^3 V_9$ under the action of $G$. Conversely, we have already seen that any element in a reflection hyperplane has a positive-dimensional stabilizer, so cannot be stable.
\end{proof}

\subsection{Stable elements}

\begin{lemma} \label{lem:deg120}
In characteristic $0$, the locus of non-stable elements of $\bigwedge^3 V_9$ is contained in an irreducible $G$-invariant hypersurface of degree $120$.
\end{lemma}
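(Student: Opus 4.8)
The plan is to realize the non-stable locus inside the zero set $\{F=0\}$ of a single explicit $G$-invariant $F$ of degree $120$ built from the reflection group $W$ acting on the Cartan subspace $\fh$, and then to deduce that $\{F=0\}$ is irreducible from the transitivity of $W$ on the reflection hyperplanes. For the construction, let $Z\subseteq\fh$ be the union of the $40$ reflection hyperplanes, and let $\delta=\prod_H\ell_H$ be the product of linear forms $\ell_H$ cutting them out, a homogeneous degree-$40$ polynomial with zero locus $Z$. Since $Z$ is $W$-stable, $W$ acts on the line $\bk\delta$ through a linear character $\chi\colon W\to\bk^\times$; as $\Sp_4(\bF_3)$ is perfect we have $W^{\mathrm{ab}}\cong\bZ/3$, so $\chi^3=1$ and $\delta^3=\prod_H\ell_H^3$ is $W$-invariant (this is also the standard fact that $\prod_H\ell_H^{e_H}$ is an invariant, here with all $e_H=3$). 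Under the graded isomorphism $\bk[\bigwedge^3 V_9]^G\xrightarrow{\ \cong\ }\bk[\fh]^W$ of Proposition~\ref{prop:chevalley}, let $F\in\bk[\bigwedge^3 V_9]^G$ be the element restricting to $\delta^3$ on $\fh$; then $F$ is homogeneous of degree $120$ (consistently with Proposition~\ref{prop:homogeneity}, which makes $F(\gamma_c)$ weighted-homogeneous of degree $120$).

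Next I would show every non-stable $\gamma\in\bigwedge^3 V_9$ satisfies $F(\gamma)=0$. If $\gamma$ is unstable this is immediate, since $F$ has positive degree and the unstable locus is the common zero set of all positive-degree $G$-invariants. If $\gamma$ is semistable but not stable, then the unique closed orbit $O$ in $\overline{G\gamma}$ consists of non-stable points; by Lemma~\ref{lem:closedorbit} it meets $\fh$, say at $y\in O\cap\fh$, and by Proposition~\ref{prop:stable-refl} the non-stable point $y$ lies in a reflection hyperplane, so $\delta^3(y)=0$. Being $G$-invariant, $F$ is constant on $\overline{G\gamma}$, hence $F(\gamma)=F(y)=\delta^3(y)=0$. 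Thus the non-stable locus lies in the $G$-invariant hypersurface $\{F=0\}$ of degree $120$; in fact equality holds, but only the containment is needed.

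It remains to prove $F$ is irreducible. Because $W$ permutes the hyperplanes transitively, the zero locus of $\delta^3$ in $\fh/W$ is the image of a single hyperplane, hence irreducible, so $(\delta^3)$ has a unique minimal prime in the polynomial ring $\bk[\fh]^W$; therefore $\delta^3=u\,p^a$ for a unit $u$ and an irreducible $p$. Passing to the UFD $\bk[\fh]$, the irreducible factors of $p$ lie among the $\ell_H$, and $W$-invariance of $p$ forces $p=u'\delta^c$ with a common exponent $c\ge 1$, so $ca=3$. The case $c=1$ is impossible, since then $p$ would be a homogeneous element of $\bk[\fh]^W$ of degree $40$, whereas every nonzero homogeneous element of $\bk[\fh]^W$ has degree divisible by $6$ (the fundamental degrees being $12,18,24,30$). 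Hence $a=1$, so $\delta^3$ is, up to a unit, the irreducible element $p$; since the isomorphism of Proposition~\ref{prop:chevalley} preserves irreducibility, $F$ is irreducible in $\bk[\bigwedge^3 V_9]^G$. Finally, $G=\SL_9/\bmu_3$ is connected and has no nontrivial characters, so any factorization of $F$ in $\bk[\bigwedge^3 V_9]$ is into $G$-invariant factors, which then lie in $\bk[\bigwedge^3 V_9]^G$; therefore $F$ is irreducible there as well, and $\{F=0\}$ is an irreducible hypersurface of degree $120$ containing the non-stable locus.

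The main obstacle is the second step: correctly tracking the closed orbit in $\overline{G\gamma}$ and S-equivalence through Lemma~\ref{lem:closedorbit} and Proposition~\ref{prop:stable-refl}, together with pinning the exponent to $3$ — the common order of the reflections, equivalently the ramification index of $\fh\to\fh/W$ along a reflection hyperplane. Granting these, the degree and irreducibility follow formally from the transitivity of $W$ on the hyperplanes, the fundamental degrees $12,18,24,30$, and the absence of characters of $G$.
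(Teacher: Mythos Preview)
Your proof is correct and follows essentially the same approach as the paper: construct the degree-$120$ invariant as the cube of the product of the $40$ reflection hyperplane forms, lift via Proposition~\ref{prop:chevalley}, trap non-stable points using Lemma~\ref{lem:closedorbit} and Proposition~\ref{prop:stable-refl}, and deduce irreducibility from connectedness of $G$ together with the fact that no smaller $W$-invariant vanishes on the hyperplanes. Your irreducibility step (ruling out $c=1$ via the observation that every homogeneous $W$-invariant has degree divisible by $6$) is a slightly cleaner packaging of the paper's claim that $f^3$ is the minimal $W$-invariant vanishing on all reflection hyperplanes, but the content is the same.
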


\begin{proof}
Let $x$ be a semistable, but not stable point, and let $y$ be a point in
its orbit closure such that $Gy$ is closed. Then $Gy \cap \fh \ne 0$ by
Lemma~\ref{lem:closedorbit}, and we may assume $y \in \fh$. By
Proposition~\ref{prop:stable-refl}, $y$ lies on a reflection
hyperplane. Let $f$ be the product of the linear forms vanishing on the
reflection hyperplanes of $\fh$, so $\deg f = 40$. The reflections
transform $f$ by a cube root of unity, so $f^3$ is the lowest degree
$W$-invariant vanishing on each reflection hyperplane. Let $\delta$ be the
$G$-invariant function on $\bigwedge^3 V_9$ which corresponds to $f^3$
under the isomorphism in Proposition~\ref{prop:chevalley}. Then $\delta$
vanishes on $y$ since it restricts to $f^3$, and hence $\delta$ also
vanishes on $x$. Finally, $\delta$ is irreducible: if not, then each
component is cut out by a $G$-invariant since $G$ is connected, and would
restrict to a $W$-invariant function of degree $<120$ vanishing on some of
the reflection hyperplanes, but no such function exists.
\end{proof}

\begin{proposition} \label{prop:non-stable-hyper}
\begin{enumerate}[\rm (a)]
\item An element $u \in \bigwedge^3 V_9$ is non-stable if and only if there exists a $6$-dimensional subspace $U \subset V_9$ such that $\gamma \in \bigwedge^3 U + \bigwedge^2 U \otimes (V_9/U)$.

\item The set of non-stable elements in $\bigwedge^3 V_9$ is an irreducible hypersurface which is set-theoretically defined by a polynomial of degree $120$. This hypersurface is reduced in characteristic $0$.
\end{enumerate}
\end{proposition}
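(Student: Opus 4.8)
The plan is to prove (a) and (b) together, using the characteristic-$0$ hypersurface $\delta$ of degree $120$ from Lemma~\ref{lem:deg120} as the anchor and then transferring to arbitrary characteristic via the DVR $R$. First I would establish (a). The ``if'' direction is a Hilbert--Mumford computation: given such a $U$, choose a basis $e_1,\dots,e_9$ with $U=\langle e_1,\dots,e_6\rangle$, and use the $1$-parameter subgroup $\rho(t)=\diag(t,\dots,t,t^{-2},t^{-2},t^{-2})$ (six $t$'s, three $t^{-2}$'s), which lies in $\SL_9$; every monomial $e_i\wedge e_j\wedge e_k$ with at most one index in $\{7,8,9\}$ has $\rho$-weight $\ge 0$, so $\lim_{t\to 0}\rho(t).u$ exists, and $u$ is non-stable. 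For the ``only if'' direction, reduce to an element $y$ in a closed orbit inside $\fh$ lying on a reflection hyperplane (as in the proof of Lemma~\ref{lem:deg120}); for the standard reflection hyperplane spanned by $[123]+[456]+[789]$, $[147]+[258]+[369]$, $[159]+[267]+[348]$, one exhibits an explicit $6$-dimensional $U$ with $y\in\bigwedge^3 U+\bigwedge^2 U\otimes(V_9/U)$ (the same $U=\langle e_1,e_5,e_9,\dots\rangle$-type subspace implicit in the stabilizing torus of Proposition~\ref{prop:refl-posdim}), and then propagate along the $G$-orbit and the specialization $\ol{Gy}\ni x$: the condition ``$\exists U$ of dimension $6$ with $u\in\bigwedge^3 U+\bigwedge^2 U\otimes(V_9/U)$'' is $G$-invariant and Zariski-closed (it is the image of a proper map from an incidence variety over the Grassmannian $\Gr(6,V_9)$), hence passes from $y$ to its whole orbit closure, and in particular to $x$. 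This shows the non-stable locus equals the image of that incidence variety, which is irreducible (the Grassmannian is irreducible and the fibers are linear spaces of constant dimension), proving the first half of (b).

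Next I would pin down the degree and the scheme structure. The incidence variety $Z=\{(U,u): u\in\bigwedge^3 U+\bigwedge^2 U\otimes(V_9/U)\}\subset\Gr(6,V_9)\times\bP(\bigwedge^3 V_9)$ has a fiber over $\Gr(6,V_9)$ of dimension $\binom{6}{3}+\binom{6}{2}\cdot 3-1=20+45-1=64$, and $\Gr(6,V_9)$ has dimension $18$, so $\dim Z=82$, while $\dim\bP(\bigwedge^3 V_9)=83$; thus the image is a hypersurface and the projection $Z\to\bP(\bigwedge^3 V_9)$ is generically finite onto its image. To get degree $120$ and reducedness in characteristic $0$, I would invoke Lemma~\ref{lem:deg120}: the irreducible hypersurface cut out by $\delta$ (degree $120$) contains the non-stable locus, and the non-stable locus is itself an irreducible hypersurface by the previous paragraph, so the two coincide as sets; since $\delta$ is irreducible it generates the radical ideal, so the non-stable hypersurface is defined set-theoretically by a degree-$120$ polynomial, and it is reduced in characteristic $0$ because $\delta$ is (an irreducible polynomial generates a radical ideal in characteristic $0$; more precisely one should check $\delta$ is not a proper power, which follows from irreducibility).

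Finally, the transfer to arbitrary characteristic. Having proved (a) over $\ol{\bk}$ in all characteristics (the Hilbert--Mumford ``if'' direction is characteristic-free, and the ``only if'' direction in positive characteristic follows by lifting over $R$ as in Proposition~\ref{prop:refl-posdim}: if $u\in\bigwedge^3 V_9$ over $\bk$ is non-stable, lift it to $\tilde u$ over $\bK$; non-stability is detected by a positive-dimensional stabilizer or a destabilizing $1$-PS, and semicontinuity of stabilizer dimension plus properness of $\Gr(6,-)$ let one specialize a characteristic-$0$ subspace $U$ down to characteristic $p$). For (b) in positive characteristic: the non-stable locus is still the image of $Z$, hence still an irreducible hypersurface, and the equations of $Z$ are defined over $\bZ$, so the degree of the image hypersurface is constant (the class of the image cycle in the Chow ring is computed over $\bZ$), giving degree $120$ in every characteristic as well. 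I would drop the reducedness claim outside characteristic $0$, as the statement only asserts it there.

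I expect the main obstacle to be the ``only if'' direction of (a): concretely producing, for an arbitrary element of a reflection hyperplane, the $6$-dimensional subspace $U$, and then checking that the closed condition ``$u$ lies in the image of $Z$'' really does propagate correctly through the orbit-closure degeneration $\ol{Gy}\ni x$ together with the S-equivalence reduction—i.e., verifying that one does not merely get $U$ for the special representative $y$ but for the original semistable non-stable $x$. The cleanest way around this is probably to argue purely in terms of the map $Z\to\bP(\bigwedge^3 V_9)$: show its image is closed and $G$-invariant of the right dimension, show it contains a reflection hyperplane's worth of points in $\fh$, and then use irreducibility of the non-stable locus (from Lemma~\ref{lem:deg120}) to force equality of the two hypersurfaces, sidestepping any pointwise degeneration argument.
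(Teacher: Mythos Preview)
Your outline matches the paper's in broad strokes (incidence variety over $\Gr(6,V_9)$, Hilbert--Mumford for the ``if'' direction, the sandwich $\text{image}(Z)\subseteq\text{non-stable}\subseteq\{\delta=0\}$ in characteristic $0$, DVR transfer), but there are two real gaps.

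The minor one: your dimension count $\dim Z=82$ does \emph{not} by itself make the image a hypersurface; you must also know the projection $Z\to\bP(\bigwedge^3 V_9)$ has generically finite fibers. The paper handles this (and simultaneously gets the degree bound $\mid 120$) by computing the Hilbert series of $\rR\pi_*\cO_\cE$ via a Koszul resolution and Borel--Weil--Bott. Your alternative route through Lemma~\ref{lem:deg120} is fine, but you still owe a single example with finitely many destabilizing $U$'s to close this. Relatedly, your first plan for ``only if'' in characteristic $0$ (find $U$ on a reflection hyperplane and propagate to $x$) fails exactly as you suspect: the specialization runs $y\in\ol{Gx}$, so $y\in Z$ does not give $x\in Z$. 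Your fallback sandwich argument is the one the paper actually uses.

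The serious gap is the transfer to positive characteristic. You lift $u$ to an arbitrary $\tilde u$ over $R$ and invoke ``semicontinuity of stabilizer dimension'' to make $\tilde u_\bK$ non-stable, but semicontinuity points the wrong way (the special fiber has stabilizer dimension $\ge$ that of the generic fiber), and in any case non-stable elements need not have positive-dimensional stabilizer. Since stability is open, a generic lift of a non-stable element can easily be stable over $\bK$. The paper's missing idea is to lift the \emph{destabilizing $1$-parameter subgroup} first: change basis so $\rho$ is diagonal, lift $\rho$ trivially to $\tilde\rho$ over $R$, and then lift $u$ \emph{inside the non-negative $\tilde\rho$-weight subspace} of $\bigwedge^3 R^9$. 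This forces $\tilde u_\bK$ to be destabilized by $\tilde\rho$, hence non-stable; only then does the characteristic-$0$ case furnish a $6$-plane over $\bK$ that you can specialize via properness of the Grassmannian. Without this step your positive-characteristic ``only if'' does not go through.
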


\begin{proof}
Let $Z$ be the set of $u$ such that there exists a 6-dimensional subspace $U \subset V_9$ such that $\gamma \in \bigwedge^3 U + \bigwedge^2 U \otimes (V_9/U)$.

Pick $\gamma \in Z$ with $U$ as above. Pick a basis $u_1, \dots, u_6$ for $U$ and extend it to a basis $u_1, \dots, u_9$ for $V_9$. Then $\gamma$ is a sum of trivectors $[ijk]$ where $|\{i,j,k\} \cap \{7,8,9\}| \le 1$. In particular, given the diagonal 1-parameter subgroup $\rho(t) = (t^3, t^3, t^3, t^3, t^3, t^3, t^{-6}, t^{-6}, t^{-6})$, we have $\lim_{t \to 0} \rho(t) \cdot \gamma$ exists, and is the result of throwing away the $[ijk]$ where $|\{i,j,k\} \cap \{7,8,9\}| = 0$. By the Hilbert--Mumford criterion, $\gamma$ is non-stable, so $Z$ is contained in the non-stable locus.

Let $P$ be the stabilizer in $\GL(V_9)$ of the subspace $e_1, \dots, e_6$ and let $E$ be the span of $e_1, \dots, e_6$. Then the span of $\bigwedge^3 E$ and $\bigwedge^2 E \otimes (V_9/E)$ is a $P$-submodule of $\bigwedge^3 V_9$ and by algebraic induction, this $P$-submodules becomes a rank $65$ vector bundle $\cE$ which is a subbundle of $\bigwedge^3 V_9 \times \Gr(6,V_9)$ where $\Gr(6,V_9)$ is the Grassmannian of $6$-dimensional subspaces of $V_9$. By the discussion above, the image of the projection $\pi \colon \cE \to \bigwedge^3 V_9$ is $Z$. In particular, $Z$ is irreducible. Let $\xi \subset \bigwedge^3 V_9^* \times \Gr(6,V_9)$ be the annihilator of $\cE$; then the Koszul complex $\bigwedge^\bullet \xi$ is a locally free resolution of $\cE$ as a subscheme of $\bigwedge^3 V_9 \times \Gr(6,V_9)$, and so its derived pushforward with respect to $\pi$ has the same Euler characteristic as $\rR \pi_* \cO_{\cE}$ (for a discussion of this, see \cite[Chapter 5]{weyman}). More specifically, everything respects the natural $\bZ$-grading, so we can calculate the Hilbert series of $\rR \pi_* \cO_{\cE}$ as:
\[
\sum_{i \ge 0} (-1)^i \rH_{\rR^i \pi_* \cO_\cE}(t) = \sum_{i = 0}^{19} (-1)^i \chi(\Gr(6,V_9); \bigwedge^i \xi) \frac{t^i}{(1-t)^{84}}.
\]
The right hand side can be computed using Borel--Weil--Bott \cite[Corollary 4.1.7]{weyman} and yields
\[
\frac{1 + t^6 + t^9 + 81t^{18} - 84t^{19}}{(1-t)^{84}} = \frac{h(t)}{(1-t)^{83}}
\]
where
\begin{align*}
h(t) &= 84t^{18}+3t^{17}+3t^{16}+3t^{15}+3t^{14}+3t^{13}+3t^{12}+3t^{11}\\
& \qquad +3t^{10}+3t^9+2t^8+2t^7+2t^6+t^5+t^4+t^3+t^2+t+1.
\end{align*}
In particular, the support of $\rR \pi_* \cO_\cE$ has dimension $83$, and this support is $Z$. This matches the dimension of the total space of $\cE$, so generically, the map $\pi$ has $0$-dimensional fibers. Since $\pi$ is projective, this implies that the support of $\rR^i \pi_* \cO_\cE$ for each $i>0$ has dimension $\le 82$. In particular, the multiplicity of $\pi_* \cO_\cE$ is $h(1) = 120$, and the degree of $Z$ divides $120$. 

In characteristic $0$, we know that $Z$ is contained in an irreducible hypersurface of degree $120$ by Lemma~\ref{lem:deg120}, so we conclude that $Z$ coincides with this hypersurface. This proves (a) and (b) in characteristic $0$.

Now we prove (a) in general. What remains is to show that every non-stable element belongs to $Z$. Let $\gamma$ be a non-stable element. Let $R$ be a complete DVR with residue field $\bk$ and fraction field $\bK$ of characteristic $0$. Let $\rho$ be a 1-parameter subgroup of $G(\bk)$ such that $\lim_{t \to 0} \rho(t) \cdot \gamma$ exists. By changing basis, we may assume that the image of $\rho$ is contained in the diagonal matrices, and hence $\rho$ can be lifted to a 1-parameter subgroup $\tilde{\rho}$ of $G(R)$. The action of $\tilde{\rho}$ on $\bigwedge^3 R^9$ decomposes it into weight spaces which are free $R$-submodules, we are interested in the negative versus non-negative subspaces. The non-negative subspace corresponds to all elements which have a limit under the action of $\tilde{\rho}(t)$ for $t \to 0$ and its reduction to $\bk$ is the non-negative subspace of the action of $\rho$ on $\bigwedge^3 V_9$. So we can lift $\gamma$ to a non-stable element $\tilde{\gamma} \in \bigwedge^3 R^9$ such that $\tilde{\gamma}_\bK \in \bigwedge^3 \bK^9$ is also non-stable. By what we just showed, there exists a $6$-dimensional subspace $U \subset \bK^9$ such that $\tilde{\gamma}_\bK \in \bigwedge^3 U + \bigwedge^2 U \otimes (\bK^9/U)$. Since the Grassmannian is proper, $U$ can be lifted to a rank $6$ $R$-submodule $\tilde{U} \subset R^9$ such that $R^9/\tilde{U}$ is free. In particular, $\tilde{\gamma} \in \bigwedge^3 \tilde{U} + \bigwedge^2 \tilde{U} \otimes R^9/\tilde{U}$ since this is a closed condition on the fibers of $R$ and it is true generically. In particular, the special fiber of $\tilde{U}$ gives a subspace which shows that $\gamma \in Z$. 

By what was shown already, we know that $Z$ is an irreducible hypersurface whose degree divides 120, so we conclude that the same is true for the non-stable locus.
\end{proof}

\begin{proposition} \label{prop:stable-smooth}
$\gamma_c$ is stable if and only if $C_c$ is smooth.
\end{proposition}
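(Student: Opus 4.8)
The plan is to identify the locus in the $8$-dimensional parameter space $\bA^8$ (with coordinates $c_3,c_6,c_9,c_{12},c_{15},c_{18},c_{24},c_{30}$) where $\gamma_c$ fails to be stable with the locus where $C_c$ is singular. In characteristic $0$ the first is $V(\delta(\gamma_c))$, where $\delta$ is the irreducible degree-$120$ $G$-invariant of Lemma~\ref{lem:deg120}/Proposition~\ref{prop:non-stable-hyper} cutting out the non-stable hypersurface; by Proposition~\ref{prop:homogeneity}, $\delta(\gamma_c)$ is weighted-homogeneous of degree $120$ in the $c_i$ (with $\deg c_i=i$). The second is $V(D_c)$, where $D_c$ is the discriminant of the genus-$2$ curve $C_c$: completing the square (when $2$ is invertible) writes $C_c$ as $x^2=f(z)$ with $f$ a quintic whose coefficient of $z^j$ is weighted-homogeneous of degree $6(5-j)$ in the $c_i$, and since the discriminant of a degree-$n$ polynomial is homogeneous of degree $2(n-1)$ and isobaric of weight $n(n-1)$ in its coefficients, every monomial of $D_c$ has weighted degree $30\cdot 2(n-1)-6\cdot n(n-1)=30\cdot 8-6\cdot 20=120$ for $n=5$; so $D_c$ too is weighted-homogeneous of degree $120$. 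It therefore suffices to show that $\delta(\gamma_c)$ and $D_c$ cut out the same hypersurface of $\bA^8$.

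The essential input is: if $C_c$ is singular then $\gamma_c$ is non-stable. Working over $\ol{\bk}$, a singular point of $C_c$ is a finite point, and translating it to the origin by a change of variables $(x,z)\mapsto(x-x_0,z-z_0)$ — an isomorphism of the pair $(C_c,P_c)$, hence realized by an (upper-triangular) element of $\GL_9$ on the trivector side by the first Remark after Proposition~\ref{prop:S-equiv}, so preserving (non-)stability — transforms $\gamma_c$ into some $\gamma_{c'}$ with $c'_{15}=c'_{24}=c'_{30}=0$ (imposing that the defining equation and its two partial derivatives vanish at the origin; this computation is characteristic-free). For such $c'$, the trivector $\gamma_{c'}$ is supported on the thirteen monomials $[267],[258],[348],[169],[357],[249],[178],[456],[257],[247],[148],[147],[145]$, each of which contains at most one index from $\{1,2,3\}$; hence $\gamma_{c'}\in\bigwedge^3 U+\bigwedge^2 U\otimes(V_9/U)$ for the coordinate subspace $U=\langle e_4,\dots,e_9\rangle$, and so $\gamma_{c'}$, hence $\gamma_c$, is non-stable by Proposition~\ref{prop:non-stable-hyper}(a).

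Thus $V(D_c)\subseteq V(\delta(\gamma_c))$ in $\bA^8$. Since the discriminant of a (depressed) quintic is a reduced — indeed irreducible — polynomial in its coefficients, $D_c$ is squarefree, so by the Nullstellensatz $D_c$ divides $\delta(\gamma_c)$ in $\bk[c]$. Provided $\delta(\gamma_c)\not\equiv 0$, the equality of weighted degrees forces $\delta(\gamma_c)=\lambda D_c$ for some $\lambda\in\bk^\times$, whence $V(\delta(\gamma_c))=V(D_c)$ and the proposition follows in characteristic $0$: $\gamma_c$ stable $\iff\delta(\gamma_c)\ne 0\iff D_c\ne 0\iff C_c$ smooth. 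The nonvanishing of $\delta(\gamma_c)$ (equivalently, the existence of one $c$ with $\gamma_c$ stable) can be obtained either by checking that the restrictions of the generators $F_{12},F_{18},F_{24},F_{30}$ of the invariant ring to the subfamily $c_3=c_6=c_9=c_{15}=0$ have nonzero leading coefficients, so that $\bA^4\to\bigwedge^3 V_9\to\bigwedge^3 V_9\git G$ is dominant and the general $\gamma_c$ is stable, or from Proposition~\ref{prop:S-equiv}: a stable element exists because the non-stable locus is a proper hypersurface, over $\ol{\bk}$ it is $S$-equivalent to some $\gamma_c$, and a semistable element $S$-equivalent to a stable one is itself stable.

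For positive characteristic (still $\ne 3$), the implication ``$C_c$ singular $\Rightarrow\gamma_c$ non-stable'' was proved above with no characteristic assumption. For the converse, given $\gamma_c$ non-stable over $\bk$, lift it over the DVR $R$ exactly as in the proof of Proposition~\ref{prop:non-stable-hyper} to a non-stable $\tilde\gamma_{\tilde c}$ whose generic fibre $\tilde\gamma_{\tilde c,\bK}$ is still non-stable; by the characteristic-$0$ case $C_{\tilde c,\bK}$ is singular, and since the discriminant is compatible with base change, $C_c=C_{\tilde c,\bk}$ is singular over $\bk$. Hence the two loci agree over every field. The main obstacle is the middle step — producing the destabilizing subspace and reducing a general singular fibre, by an allowed change of variables, to the normalized one; once this is in place the degree count and the DVR descent are routine bookkeeping, with the nonvanishing of $\delta(\gamma_c)$ a minor separate point.
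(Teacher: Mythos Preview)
Your characteristic-$0$ argument is correct and takes a genuinely different route from the paper's. You match the weighted degree $120$ of $\delta(\gamma_c)$ against that of the discriminant $D_c$, use the easy inclusion $V(D_c)\subseteq V(\delta(\gamma_c))$ and irreducibility of $D_c$ to force $\delta(\gamma_c)=\lambda D_c$, and check $\lambda\neq 0$ via the existence of a stable $\gamma_c$. The paper instead argues uniformly in all characteristics by studying the incidence scheme
\[
I=\{(c,U)\in\bA^8\times\Gr(6,9):\gamma_c\in\lw{3} U+\lw{2} U\otimes(V_9/U)\};
\]
it shows that the $\bG_m$-action of Proposition~\ref{prop:homogeneity} contracts every point of $I$ to the single point $(\gamma_0,\langle e_4,\dots,e_9\rangle)$, computes the tangent space there to be $7$-dimensional, and concludes that $I$ is smooth, irreducible, and $7$-dimensional --- hence its image in $\bA^8$ is exactly the (already $7$-dimensional) singular-curve locus. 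Your approach is slicker in characteristic $0$ but does not extend directly.

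The positive-characteristic step has a genuine gap. The lifting procedure in the proof of Proposition~\ref{prop:non-stable-hyper} produces a non-stable $\tilde\gamma\in\lw{3} R^9$ reducing to $\gamma_c$, but there is no reason this $\tilde\gamma$ lies in the $8$-parameter family, i.e.\ is of the form $\gamma_{\tilde c}$ for some $\tilde c\in R^8$: that proof diagonalizes the destabilizing one-parameter subgroup and lifts arbitrarily inside its non-negative weight space. Without $\tilde\gamma=\gamma_{\tilde c}$ you cannot pass to a curve $C_{\tilde c,\bK}$ and invoke the characteristic-$0$ case. What you actually need is to lift the pair $(c,U)\in I(\bk)$ to $I(R)$, and that is exactly what smoothness of $I$ (the content of the paper's argument) would give you via Hensel. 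An alternative patch --- proving $\delta(\gamma_c)=\pm D_c$ over $\bZ$ and that the reduction of $\delta$ still cuts out the non-stable locus over $\bF_p$ --- runs into the difficulty that Proposition~\ref{prop:non-stable-hyper}(b) only asserts the non-stable hypersurface has degree \emph{dividing} $120$ in positive characteristic, so you cannot conclude that $\bar\delta$ is its reduced equation.
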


\begin{proof}
If the curve $C_c$ is singular, then translating the singular point to $(0,0)$ gives a curve 
\[
C_{c'}: x^2+z^5 + c'_3 x z^2+ c'_6 z^4 + c'_9 x z + c'_{12} z^3+ c'_{15} x + c'_{18} z^2  + c'_{24} z + c'_{30} = 0.
\]
In particular, $c'_{30} = 0$ (since $(0,0)$ is a point) and $c'_{15}=c'_{24}=0$ (since the partial derivatives of $x$ and $z$ vanish at $(0,0)$). If we take $U = \langle e_4, e_5, e_6, e_7, e_8, e_9\rangle$, then $\gamma_{c'} \in \bigwedge^3 U + \bigwedge^2 U \otimes (V_9/U)$, so $\gamma_{c'}$ is non-stable by Proposition~\ref{prop:non-stable-hyper}, so the same is true for $\gamma_c$ since they are projectively S-equivalent by Proposition~\ref{prop:S-equiv}.

Consider the set of all pairs $(\gamma_c,U)$ with $\gamma_c\in \bigwedge^3
U + \bigwedge^2 U\otimes (V_9/U)$.  This is a closed subscheme of
$\bA^8\times \Gr(6,9)$, and is thus proper over $\bA^8$.  We claim that in
any characteristic, the total space is smooth of dimension $7$ and
irreducible.

The 1-parameter subgroup of Proposition \ref{prop:homogeneity} acts on this
scheme, and since the limit $t\to 0$ always exists in $\bA^8$, properness
implies that it exists in the scheme of pairs.  We claim that the limit
must, in fact, be $(\gamma_0,\langle e_4,\dots,e_9\rangle)$.  Indeed,
since we are taking a limit along a diagonal 1-parameter subgroup with
distinct eigenvalues, the limiting subspace is a coordinate subspace, and
there is only coordinate subspace that destabilizes $\gamma_0$.  Since the
limit point is independent of the starting point, we can bound the
dimension of every tangent space by computing its dimension at the limit.
This is straightforward linear algebra, and we find that it is indeed
7-dimensional.  Since we already know a 7-dimensional component and every
component meets the limit point, there can be no other components, and the
component we know is smooth.  In particular, the image of this scheme in
$\bA^8$ must be precisely the locus where $C_c$ is singular, as required.
\end{proof}

\section{Parametrizing $3$-coverings of abelian surfaces} \label{sec:3covers}

Let $(\bigwedge^3 V_9)_\st$ be the set of stable elements of $\bigwedge^3 V_9$ with respect to the $\SL(V_9)/\bmu_3$-action.

Fix $u \in (\bigwedge^3 V_9)_\st$. From this data, we will construct:
\begin{itemize}
\item a genus $2$ curve $C$ with a marked Weierstrass point $P \in C(\bk)$, 
\item a $3$-covering $\psi \colon X \to J$ (where $J = \rJ(C)$ is the Jacobian of $C$) such that $[\psi] \in \ker(\rH^1(\bk; \rJ(C)[3]) \to \rH^1(\bk; \SL(V_9)/\bmu_3))$.
\end{itemize}
Recall that $\psi \colon X \to J$ is a $3$-covering if $X$ is a torsor for $J$ and $\psi$ can be identified with the multiplication-by-$3$ map over an algebraic closure of $\bk$; $3$-coverings are classified by cohomology classes in $\rH^1(\bk; J[3])$ \cite[Proposition 3.3.2]{skorobogatov}.

To simplify notation, we will not label the objects by $u$, but we emphasize that all constructions depend on the $\PGL(V_9)$-orbit of $[u] \in \bP((\bigwedge^3 V_9)_\st)$.

Let $\bP(V_9^*)$ denote the space of lines in $V_9^*$. Then $V_9$ is the space of linear functions on $\bP(V_9^*)$, so we can treat $e_1, \dots, e_9$ as coordinate functions. Following \cite[\S 3.2]{GS}, we interpret $u \in \bigwedge^3 V_9$ as a family of $9 \times 9$ skew-symmetric matrices
\[
\Phi \colon V_9^* \to V_9 \otimes \cO_{\bP(V_9^*)}(1)
\]
over $\bP(V_9^*)$. In more details, given $u \in \bigwedge^3 V_9$, apply the comultiplication map $\bigwedge^3 V_9 \to \bigwedge^2 V_9 \otimes V_9$, use the natural surjection $V_9 \otimes \cO_{\bP(V_9^*)} \to \cO_{\bP(V_9^*)}(1)$, and interpret $\bigwedge^2 V_9$ as the space of skew-symmetric matrices $V_9^* \to V_9$. In particular, this construction is $\GL(V_9)$-equivariant, so acting by $\GL(V_9)$ amounts to a projective linear change of coordinates in $\bP(V_9^*)$.

Let $Y \subset \bP(V_9^*)$ be the locus where $\rank \Phi \le 6$. Let $X \subset \bP(V_9^*)$ be the locus where $\rank \Phi \le 4$. 

\begin{lemma}
$X$ is smooth of dimension $2$, and the locus where $\rank \Phi \le 2$ is empty.
\end{lemma}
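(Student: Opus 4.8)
The plan is to reduce both assertions to the generic structure of the skew-symmetric family $\Phi$ over $\bP(V_9^*)$, exploiting the $\GL(V_9)$-equivariance of the construction together with the stability hypothesis on $u$. First I would recall the general fact (going back to the geometry of generic skew-symmetric matrices, cf. the resolutions in \cite{weyman}) that the locus where a generic family of $9 \times 9$ skew-symmetric matrices has rank $\le 2k$ has expected codimension $\binom{9-2k}{2}$: so the rank $\le 6$ locus $Y$ has expected codimension $\binom{3}{2} = 3$, the rank $\le 4$ locus $X$ has expected codimension $\binom{5}{2} = 10$, and the rank $\le 2$ locus has expected codimension $\binom{7}{2} = 21$. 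Since $\dim \bP(V_9^*) = 8$, the rank $\le 2$ locus is expected to be empty and $X$ is expected to be empty as well on dimension grounds for a \emph{generic} family; but our $\Phi$ is far from generic (it has only 9 parameters' worth of coordinate functions feeding a very special equivariant map). So the real content is to show that for \emph{stable} $u$ the actual dimensions agree with a controlled count: $\dim X = 2$ and the rank $\le 2$ locus is empty.

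The key steps, in order: (1) Show the rank $\le 2$ locus is empty. Suppose $v^* \in V_9^*$ lies in it; then $\Phi(v^*)$ has rank $\le 2$, which forces $u$, viewed through the comultiplication $\bigwedge^3 V_9 \to \bigwedge^2 V_9 \otimes V_9$ and paired against $v^*$, to land in a very degenerate locus — concretely, after a change of coordinates putting $v^* = e_9^*$ (wedging with the generator), the condition says that the "partial contraction" of $u$ by $e_9^*$, an element of $\bigwedge^2 V_9$, has rank $\le 2$, i.e.\ is decomposable or a sum of two decomposables. I would then exhibit a $6$-dimensional subspace $U \subset V_9$ with $u \in \bigwedge^3 U + \bigwedge^2 U \otimes (V_9/U)$, contradicting stability via Proposition~\ref{prop:non-stable-hyper}(a). (2) For $X$, establish $\dim X \le 2$ by the same mechanism: a point of $X$ gives enough vanishing of Pfaffian-type minors that one again produces a $6$-dimensional subspace destabilizing $u$ unless the point moves in a family of dimension $\le 2$; more precisely, I expect the cleanest route is to pass to the algebraic closure, use Proposition~\ref{prop:S-equiv} to reduce to $u = \gamma_c$ with $C_c$ smooth (Proposition~\ref{prop:stable-smooth}), and there identify $X$ explicitly with (an embedding of) the Jacobian $\rJ(C_c)$ — this is exactly the identification quoted from \cite{GS, GSW} in the introduction, where $X$ is cut out inside $\bP^8$ by the $6 \times 6$ Pfaffians and is known to be the abelian surface. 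Smoothness of $X$ then follows from smoothness of $\rJ(C_c)$ over $\ol{\bk}$ together with the fact that smoothness is insensitive to base field extension, or alternatively by a direct Jacobian-criterion computation on the Pfaffian ideal at a point, again reduced to the limit/normal-form computation as in the proof of Proposition~\ref{prop:stable-smooth}.

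\textbf{Main obstacle.} The hard part is step (2): making rigorous the passage from "stable $u$" to "$X = \rJ(C)$ smooth of dimension $2$" without circularly invoking the very equivalence this paper sets out to prove. In characteristic $0$ this is available from \cite{GS, GSW}, but over an arbitrary field one must either (a) run the Pfaffian Jacobian-criterion computation characteristic-freely — feasible but fiddly, since one has to control the rank of the differential of the map cutting out $X$ at every point, and the special structure of $\Phi$ makes the generic codimension count non-applicable — or (b) lift $u$ over the DVR $R$ to characteristic $0$, deduce smoothness and dimension of the generic fiber from the known complex picture, and then use flatness/semicontinuity (as in Propositions~\ref{prop:refl-posdim} and~\ref{prop:stable-smooth}) to descend to the special fiber, being careful that the Pfaffian scheme structure is what is being lifted and that no extra components appear in the special fiber. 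Approach (b) is the one I would expect to pursue, mirroring the lifting arguments already used repeatedly in Section~\ref{sec:invt-theory}; the emptiness of the rank $\le 2$ locus in step (1) is comparatively soft and should be a short direct argument from Proposition~\ref{prop:non-stable-hyper}.
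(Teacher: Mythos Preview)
Your preferred route for smoothness, lifting over a DVR, has a genuine gap: knowing that the generic fiber $X_K$ is a smooth abelian surface does not force the special fiber $X_\bk$ to be smooth, since smoothness of fibers is not preserved under specialization (a flat family of smooth surfaces can acquire a singular special fiber). Flatness of $X_R/R$, which one could indeed extract from the Pfaffian resolution of \cite{GSW}, would give $\dim X_\bk = 2$ but no more; and you cannot import smoothness by identifying $X_\bk$ with $\rJ(C_c)$, because the paper's later identification (Proposition~\ref{prop:torsor}) passes through the Bombieri--Mumford classification and thus \emph{presupposes} that $X$ is smooth. The models you cite do not transfer either: Proposition~\ref{prop:refl-posdim} descends positive-dimensionality of a stabilizer, which is upper-semicontinuous in the helpful direction, whereas smoothness is not.

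The paper instead carries out your alternative (a), directly and characteristic-freely. At $x \in X$ one block-diagonalizes $\Phi$ over the local ring as $\Phi' \oplus \Psi$ with $\Phi'$ a $5\times 5$ skew form vanishing at $x$ and $\Psi$ an invertible $4\times 4$ block; the tangent space of $X$ at $x$ is then the kernel of the Jacobian map $\cV_9/\cV_1 \to \bigwedge^2(\cV_5/\cV_1)$ whose entries are those of $\Phi'$. Failure of surjectivity yields a nonzero $\lambda \in \bigwedge^2(\cV_5/\cV_1)_x^*$ annihilating the image, and after normalizing $\lambda$ by the $\SL_4$-action one writes down an explicit destabilizing $1$-parameter subgroup (weights $(-2,-2,-2,1,\dots,1)$ if $\rank\lambda=2$, and $(-4,-1,-1,-1,-1,2,2,2,2)$ if $\rank\lambda=4$), checking in each case that the monomials obstructing destabilization are killed by the hypothesis on $\lambda$. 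For the rank $\le 2$ locus, your plan to produce a destabilizing $6$-subspace via Proposition~\ref{prop:non-stable-hyper} is also harder than advertised: from the normal form $u = [123] + u'$ with $u' \in \bigwedge^3\langle e_2,\dots,e_9\rangle$ arbitrary, there is no evident choice of $U$. The paper bypasses this entirely by observing that the substitutions $e_1 \mapsto e_1 + \alpha e_2 + \beta e_3$ fix $u$, exhibiting a $2$-dimensional stabilizer and contradicting stability on the spot.
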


\begin{proof}
If there is a point in the rank $2$ locus, then we can choose a basis so that it is the point $[1:0:0: 0:0:0: 0:0:0]$. So $u = [123] + u'$ where no monomial in $u'$ contains $e_1$. Then $(e_1\mapsto e_1+ \alpha e_2+ \beta e_3)$ is a $2$-dimensional subgroup of the stabilizer of $u$, which contradicts that $u$ has a finite stabilizer group. A similar argument works if there is a point in the rank $0$ locus.

Let $\cV_1 = \cO(-1)|_X$ be the restriction of the tautological subbundle of lines to $X$. Also let $\cV_9 = V_9^*|_X$ and $\cV_5 = \ker \Phi|_X$. Then $\cV_5$ is a rank $5$ vector bundle on $X$ satisfying $\cV_1 \subset \cV_5 \subset \cV_9$.

We now compute the tangent space of $x \in X$. Do a change of basis so that $e_i(x) = 0$ for $i>1$ and so that $(\cV_5)_x$ is defined by $e_i=0$ for $i>5$. Let $R$ be the local ring of $\bP(V_9^*)$ at $x$, and let $\fm$ be its maximal ideal. Over the fiber of $x$, i.e., working modulo $\fm$, the matrix $\Phi$ has rank $4$ and its kernel is the fiber of $\cV_5$, so looks like $\begin{pmatrix} 0 & 0 \\ 0 & \psi \end{pmatrix}$ where $\psi$ is an invertible $4 \times 4$ skew-symmetric matrix. In particular, the determinant of the corresponding $4 \times 4$ block over $R$ does not belong to $\fm$, so is a unit, and hence that block is invertible. So after a change of basis over $R$, we can assume that the matrix over $R$ is of the form $A = \begin{pmatrix} \Phi' & 0 \\ 0 & \Psi \end{pmatrix}$, where $\Phi'$ is $5 \times 5$ and all of its entries belong to $\fm$, and $\Psi = \begin{pmatrix} 0 & -1 & 0 & 0 \\ 1 & 0 & 0 & 0 \\ 0 & 0 & 0 & -1 \\ 0 & 0 & 1 & 0 \end{pmatrix}$.

The $6 \times 6$ Pfaffians of $A$ are the equations that locally cut out $X$ at $x$, and their partial derivatives are the $4 \times 4$ Pfaffians. So any $6 \times 6$ Pfaffian that uses at least $3$ rows from $\Phi'$ has identically $0$ partial derivatives. Hence the only $6 \times 6$ Pfaffians that have nonzero derivatives are those that use $2$ rows from $\Phi'$ together with the last $4$ rows of $A$. The partial derivatives of these Pfaffians are the entries of $\Phi'$, so the tangent space of $x \in X$ is the kernel of the Jacobian map $\cV_9/\cV_1\to \bigwedge^2(\cV_5/\cV_1)$ restricted to $x$. 

Hence it suffices to prove that the Jacobian map is surjective at all points of $X$. Suppose there is a point $x \in X$ so that the map is not surjective. Choose a nonzero linear functional $\lambda$ that annihilates the image. The calculation is equivariant under $\SL((\cV_5/\cV_1)_x)$, so we only need to check what happens for a single representative in each orbit in $\bigwedge^2(\cV_5/\cV_1)^*$. 

If $\lambda$ has rank $2$ (say $\lambda(m)$ is the coefficient of $e_2 \wedge e_3$), it induces a subspace $V_3$ of $(\cV_5)_x$ containing $(\cV_1)_x$, and the $1$-parameter subgroup with weight $(-2,-2,-2,1,1,1,1,1,1)$ is destabilizing. Indeed, before imposing the condition that $\lambda$ annihilates the map to $\bigwedge^2(V_5/V_1)$, the only monomials preventing that weight from destabilizing are $[23i]$ for $4\le i\le 9$; let $\alpha_i$ be the coefficient of $[23i]$. So the $e_2 \wedge e_3$ entry, which is $0$, is $\sum_{i=4}^9 \pm \alpha_i e_i$, so $\alpha_i = 0$ for all $i$.

If $\lambda$ has rank $4$, we similarly find that the weight $(-4,-1,-1,-1,-1,2,2,2,2)$ is destabilizing. Here the only monomials preventing that weight from destabilizing are $[ijk]$ where $\{i,j,k\} \subset \{2,3,4,5\}$, and it is easy to see that they cannot appear. 
\end{proof}

\begin{lemma}
If $u$ is not stable, then $X$ is singular.
\end{lemma}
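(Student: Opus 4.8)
The plan is to prove the contrapositive directly via the Hilbert--Mumford criterion, using Proposition~\ref{prop:non-stable-hyper}(a). Suppose $u$ is not stable. Then there is a $6$-dimensional subspace $U \subset V_9$ with $u \in \bigwedge^3 U + \bigwedge^2 U \otimes (V_9/U)$. After a change of coordinates we may assume $U = \langle e_1, \dots, e_6 \rangle$, so every monomial $[ijk]$ appearing in $u$ has $|\{i,j,k\} \cap \{7,8,9\}| \le 1$. I want to produce a point of $\bP(V_9^*)$ where $\rank \Phi \le 4$ but where the tangent-space/Jacobian criterion from the previous lemma fails --- equivalently, a point of $X$ whose stabilizer (or rather, whose local structure) forces singularity. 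The natural candidate is the point $x = [0:\dots:0:1]$ corresponding to the functional $e_9^*$, i.e. the line spanned by $e_9^*$ in $V_9^*$; more symmetrically, any point of $\bP(\langle e_7^*, e_8^*, e_9^*\rangle) \subset \bP(V_9^*)$.

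First I would compute $\Phi(e_9^*)$: by the recipe (comultiply $u \to \bigwedge^2 V_9 \otimes V_9$, then pair the last tensor factor against $e_9^*$ and restrict), $\Phi(e_9^*)$ is the skew-symmetric form obtained by contracting $u$ with $e_9^*$, i.e. $\iota_{e_9^*} u \in \bigwedge^2 V_9$. Since every monomial of $u$ meets $\{7,8,9\}$ in at most one index, the bivector $\iota_{e_9^*} u$ is supported entirely on $\bigwedge^2 U = \bigwedge^2 \langle e_1,\dots,e_6\rangle$ --- a skew form on a $6$-dimensional space --- so its rank is at most $6$, and generically it will be exactly... well, I need it to be $\le 4$, not just $\le 6$. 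Here is where I think the real argument lies: I should instead run the Hilbert--Mumford / destabilizing-subgroup argument \emph{relative to} $x$. Pick the point $x$ to lie in the "most degenerate" locus: because $u$ lies on the non-stable hypersurface, there is a $1$-parameter subgroup $\rho$ with $\lim_{t\to 0}\rho(t)\cdot u$ existing; the associated filtration gives a coordinate subspace, and one checks that for a suitable coordinate point $x$ in the span of the negative-weight coordinate directions of $V_9^*$, the matrix $\Phi(x)$ drops rank to $\le 4$ and moreover the Jacobian map $\cV_9/\cV_1 \to \bigwedge^2(\cV_5/\cV_1)$ of the previous lemma fails to be surjective at $x$ --- precisely because the destabilizing weight vector produces a linear functional annihilating the image, exactly as in the rank-$2$ and rank-$4$ alternatives analyzed there.

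Concretely, I would argue: the $1$-parameter subgroup exhibiting non-stability, together with the explicit form of $\Phi$, shows that on the corresponding coordinate point $x$ the kernel $\ker\Phi(x) = (\cV_5)_x$ exists (rank exactly $4$, or if it is $\le 2$ we are done by the first lemma's argument since then $u$ would have positive-dimensional stabilizer, contradiction with $u$ merely being non-stable only if... — actually non-stable allows infinite stabilizer, so rank $\le 2$ is fine and even easier: $X$ would fail to have pure dimension $2$ or would be visibly singular). Assuming $\Phi(x)$ has rank exactly $4$, the point $x$ lies in $X$, and the destabilizing $1$-parameter subgroup, read through the tangent-space computation of the preceding lemma, exhibits a nonzero linear functional $\lambda \in \bigwedge^2(\cV_5/\cV_1)_x^*$ annihilating the image of the Jacobian map (the weight being destabilizing is equivalent to the relevant coefficients $\alpha_i$ vanishing, which is exactly the statement that $\lambda$ kills the Jacobian image). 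Hence the tangent space to $X$ at $x$ has dimension $> 2$, so $X$ is singular at $x$.

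The main obstacle I anticipate is the bookkeeping to guarantee that the destabilizing coordinate point $x$ actually lands in $X$ with $\rank\Phi(x) = 4$ (not $6$) --- i.e. matching the ``$6$-dimensional subspace destabilizes $u$'' data with the ``rank-$\le 4$ locus of $\Phi$'' data. I would handle this by noting that the rank-$\le 4$ locus $X$ is cut out by the $6\times 6$ Pfaffians of $\Phi$, these Pfaffians are (up to scalar) the coefficients of $u$ when $u$ is viewed via its matrix presentation, and the non-stability condition $u \in \bigwedge^3 U + \bigwedge^2 U \otimes (V_9/U)$ forces a whole linear family of these Pfaffians to vanish along the line $\langle e_7^*, e_8^*, e_9^*\rangle$; a short rank count then pins down a point of that line (or the whole line) lying in $X$, and one checks the Jacobian degeneracy there. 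An alternative, possibly cleaner, route is the degeneration argument in the style of the proof of Proposition~\ref{prop:stable-smooth}: work over the DVR $R$, lift $u$ to a generic stable $\tilde u$ over $\bK$ with the same non-stable special fiber, track $X$ in the family, and use properness plus the fact that over $\bK$ the non-stable locus already forces $X_\bK$ to be singular (or forces the limit of $X$ to acquire an extra tangent direction), concluding semicontinuously that $X$ over $\bk$ is singular; but I expect the direct Hilbert--Mumford computation to be shorter.
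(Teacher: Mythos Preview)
Your instinct to use Proposition~\ref{prop:non-stable-hyper}(a) and look at the plane $\bP(U^\perp) = \bP(\langle e_7^*,e_8^*,e_9^*\rangle)$ is exactly right, and the paper does the same. But the proposal has a genuine gap at precisely the point you flag as ``the main obstacle'': you never establish that $X \cap \bP(U^\perp)$ is nonempty. Your fix (``a whole linear family of these Pfaffians vanish \dots\ a short rank count then pins down a point'') is too vague, and in places confused --- $\bP(U^\perp)$ is a $\bP^2$, not a line, and the Pfaffians are cubic forms on $\bP(V_9^*)$, not coefficients of $u$.

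The missing observation is short: for any $x \in U^\perp$ the contraction $\Phi(x) = \iota_x u$ lies in $\bigwedge^2 U$, so as a $9\times 9$ skew matrix it is supported on a single $6\times 6$ block. Hence on $\bP(U^\perp)\cong\bP^2$ the condition $\rank\Phi\le 4$ is cut out by \emph{one} cubic form (the Pfaffian of that $6\times 6$ block), and a cubic on $\bP^2$ either vanishes identically or cuts out a curve. Either way $X\cap\bP^2\neq\emptyset$.

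Once you have a point $p\in X\cap\bP^2$, your plan to ``reverse'' the previous lemma's argument can be made to work in the rank-$4$ case (with $V_3 = U^\perp$ automatically satisfying $\cV_1\subset V_3\subset\cV_5$, since $U^\perp\subset\ker\Phi(p)$), but you should say this explicitly rather than leave it implicit. The paper instead does a direct dimension count: the Jacobian map $\cV_9/\cV_1\to\bigwedge^2(\cV_5/\cV_1)$ at $p$ factors through the $5$-dimensional space $\bigwedge^2(U/\im\Phi(p))\oplus (U/\im\Phi(p))\otimes(\ker(p)/U)$, hence cannot surject onto the $6$-dimensional target, so the tangent space has dimension $\ge 3$. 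If instead $\rank\Phi(p)\le 2$, every $6\times 6$ Pfaffian vanishes to order $\ge 2$ at $p$, so the tangent space is all of $\bP^8$. Your aside that rank $\le 2$ ``contradicts $u$ merely being non-stable'' is wrong: non-stable elements are allowed to have positive-dimensional stabilizer, so this case must be handled, not dismissed.

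Finally, the DVR alternative you sketch is circular: ``over $\bK$ the non-stable locus already forces $X_\bK$ to be singular'' is the characteristic-zero case of the very statement you are proving.
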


\begin{proof}
If $u$ is not stable, let $U \subset V_9$ be the corresponding destabilizing subspace as in Proposition~\ref{prop:non-stable-hyper}, i.e., $\dim U = 6$ and $u \in \bigwedge^3 U + \bigwedge^2 U \otimes (V_9/U)$.  Then $U$ cuts out a $\bP^2 \subset \bP(V_9^*)$, and the restriction of $\Phi$ to that plane is supported on $U$.  Thus the intersection of $X$ and that plane is cut out by a single $6 \times 6$ Pfaffian. So either $X$ contains the plane, or it meets it in a cubic curve.

Let $p$ be a point of the intersection.  If $\rank (\Phi|_p) \le 2$, then the $6 \times 6$ Pfaffians of any matrix $\Phi|_p+\eps \Psi$ (here $\eps^2=0$) all vanish, and thus the tangent space of $X$ at $p$ is $8$-dimensional.  If $\rank (\Phi|_p) = 4$, then the tangent space consists of $v$ in $V_9^*/ \langle p \rangle$ such that the restriction of $\Phi(v)$ to the kernel of $\Phi|_p$ is $0$.  Since this restriction is contained in the $5$-dimensional space $\bigwedge^2(U/ \im \Phi|_p) \oplus (U/\im\Phi_p)\otimes (\ker(p)/U)$, it follows that the tangent space is at least $3$-dimensional. Either way, $X$ cannot be a smooth surface. 
\end{proof}

\begin{lemma} \label{lem:coble}
$Y$ is a cubic hypersurface whose singular locus is $X$.
\end{lemma}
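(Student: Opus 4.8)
The plan is to exhibit the cubic directly from a kernel identity, then prove the two inclusions between $X$ and the singular locus of $Y$ separately. Write $\bx=(e_1,\dots,e_9)$ for the tautological vector of coordinate linear forms on $\bP(V_9^*)$; at a point $[\xi]$ this evaluates to the vector representing $\xi\in V_9^*$. Because $\Phi$ is obtained by contracting the totally antisymmetric $u$, one has $\Phi\cdot\bx\equiv 0$: the $a$-th entry of $\Phi(\xi)\xi$ is $\sum_{j,k}u_{ajk}\xi_j\xi_k$, which vanishes since $u_{ajk}$ is antisymmetric in $j,k$ while $\xi_j\xi_k$ is symmetric. Hence $\rank\Phi\le 8$ everywhere, with $\bx$ always in $\ker\Phi$. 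On the other hand the vector $v=(v_1,\dots,v_9)$ of signed $8\times 8$ sub-Pfaffians of $\Phi$ (a vector of quartics) satisfies $\Phi\cdot v\equiv 0$ by the classical Pfaffian identity, and spans $\ker\Phi$ wherever $\rank\Phi=8$. Therefore $v=F\cdot\bx$ for a unique cubic form $F$, and since $\{\rank\Phi\le 6\}=\{v=0\}$ and $\bx$ is nowhere zero on $\bP(V_9^*)$, we get $Y=\{F=0\}$. (That $F\not\equiv 0$, so that $Y$ is a genuine hypersurface, will come out of the last step.)

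For the inclusion $X\subseteq\operatorname{Sing}Y$: differentiating $v=F\bx$ gives $\partial_l v_i=(\partial_l F)\,x_i+F\,\delta_{il}$, so on $Y$ we have $\partial_l v_i=(\partial_l F)\,x_i$. By the chain rule each $\partial_l v_i$ is a linear combination, with coefficients the structure constants of $u$, of the $6\times 6$ sub-Pfaffians of $\Phi$. At a point $p\in X$ all $6\times 6$ sub-Pfaffians of $\Phi|_p$ vanish, so all $\partial_l v_i$ vanish at $p$; picking $i$ with $x_i(p)\ne 0$ forces $\nabla F(p)=0$, and as $F(p)=0$ the point $p$ is singular on $Y=V(F)$ (in characteristic $\ne 3$ this condition is exactly $\nabla F(p)=0$, by Euler).

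For the reverse inclusion $\operatorname{Sing}Y\subseteq X$, note that the rank $\le 2$ locus of $\Phi$ is empty, so on $Y$ the rank is $4$ or $6$ and $Y\setminus X$ is precisely the rank-$6$ locus; it is enough to show such points are smooth on $Y$. Fix $p$ with $\rank\Phi|_p=6$ and choose coordinates so that $p=[e_9^*]$, $\ker\Phi|_p=\langle e_7^*,e_8^*,e_9^*\rangle$, and $\Phi|_p$ is nondegenerate on $\langle e_1^*,\dots,e_6^*\rangle$; this forces $u_{ij9}=0$ unless $i,j\le 6$ (with $(u_{ij9})_{i,j\le 6}$ nondegenerate). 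Over the local ring at $p$ the $6\times 6$ block is invertible, so a Schur-complement gauge transformation (equal to the identity at $p$) puts $\Phi$ in block-diagonal form $\operatorname{diag}(\Psi',\Phi'')$ with $\Psi'$ invertible and $\Phi''$ a $3\times 3$ alternating matrix with entries in $\fm$. The identity $\Phi\bx\equiv 0$ transports to a twisted tautological section with a unit last coordinate lying in $\ker\operatorname{diag}(\Psi',\Phi'')$; this forces $\Phi''$ to be a scalar multiple of the "cross-product" matrix of that section, so that $Y$ is locally the principal divisor $\{a=0\}$ for one entry $a=\Phi''_{78}$, whose linear part at $p$ is $\sum_{m=1}^{6}\pm u_{78m}\,y_m$ (the gauge corrections to this linear part vanish because $\ker\Phi|_p$ is isotropic for $\Phi|_p$). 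If $p$ were singular on $Y$ then $a\in\fm^2$, hence $u_{78m}=0$ for $1\le m\le 6$; combining this with the normalization above (so in particular $u_{789}=0$ and no monomial of $u$ contains both $e_7$ and $e_8$) one reads off $u\in\bigwedge^3 W+\bigwedge^2 W\otimes(V_9/W)$ with $W=\langle e_1,\dots,e_6\rangle$, contradicting stability of $u$ via Proposition~\ref{prop:non-stable-hyper}. The same computation rules out $Y=\bP(V_9^*)$ (which would force $a\equiv 0$ at a general, necessarily rank-$6$, point), giving $F\not\equiv 0$; together with Step 2 this yields $\operatorname{Sing}Y=X$.

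The routine portions are the production of $F$ and the chain-rule inclusion $X\subseteq\operatorname{Sing}Y$. The main obstacle is the last step: choosing the adapted coordinates, pushing $\Phi\bx\equiv 0$ through the Schur complement to pin down $\Phi''$ exactly, extracting the linear term of the local equation $a$ of $Y$, and recognizing the resulting vanishing pattern $u_{78m}=0$ as exactly a destabilizing $6$-dimensional subspace in the sense of Proposition~\ref{prop:non-stable-hyper}. One also has to handle a few scheme-versus-set points — that the rank-$\le 6$ locus with its Pfaffian scheme structure has $V(F)$ as its reduction, and that $p\in\operatorname{Sing}V(F)$ implies $a\in\fm^2$ via divisibility in the (regular, hence factorial) local ring — but these are bookkeeping once the local normal form is in hand, and the reduction of the positive-characteristic case to characteristic $0$ (if one prefers to cite \cite{GS,GSW} there) proceeds by the usual lift over the DVR $R$ as in the proof of Proposition~\ref{prop:non-stable-hyper}.
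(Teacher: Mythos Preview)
Your argument is correct and follows essentially the same strategy as the paper's: chain rule for $X\subseteq\operatorname{Sing}Y$, and for the reverse inclusion, normalize coordinates at a rank-$6$ point so that the kernel of $\Phi$ is a coordinate $3$-plane, read off that singularity forces the coefficients $u_{78m}$ (in the paper's indexing, the $[23i]$) to vanish, and conclude that $u$ is destabilized. The differences are in packaging rather than substance: you derive the cubic $F$ directly from the identity $v=F\bx$ (the paper cites \cite{GSW}), you extract the local equation via an explicit Schur complement where the paper phrases the tangent space as the kernel of the Jacobian map $\cV_9/\cV_1\to\bigwedge^2(\cV_3/\cV_1)$, and you invoke Proposition~\ref{prop:non-stable-hyper} where the paper writes down the destabilizing $1$-parameter subgroup $(-2,-2,-2,1,1,1,1,1,1)$ directly.
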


\begin{proof}
The fact that $Y$ is a cubic hypersurface follows from \cite[\S 5]{GSW}. We remark that while that paper works over the complex numbers, the particular calculation that $Y$ is a cubic hypersurface is independent of the field since it only relies on knowing that the determinant of the tautological quotient bundle on projective space is the line bundle $\cO(1)$.

It follows from the chain rule that all partial derivatives of the cubic defining $Y$ vanish on $X$, so we just need to show that $Y$ is smooth away from $X$. Let $\cV_1$ denote the restriction of the tautological subbundle of lines to $Y \setminus X$. Note that $\cV_1 = \cO(-1)|_{Y \setminus X}$. Also let $\cV_9 = V_9^*|_{Y \setminus X}$ and $\cV_3 = \ker \Phi|_{Y \setminus X}$. Then $\cV_3$ is a rank $3$ vector bundle on $Y \setminus X$ satisfying $\cV_1 \subset \cV_3 \subset \cV_9$.

The tangent space at a point $x \in Y \setminus X$ is the kernel of the Jacobian map $\cV_9 / \cV_1 \to \bigwedge^2(\cV_3 / \cV_1)$ restricted to $x$ (this is similar to the argument in the previous proof). So $x$ is smooth if and only if this map is nonzero. Suppose that the map is zero at $x$ and do a change of basis so that $e_i(x) = 0$ for $i>1$ and so that $(\cV_3)_x$ is defined by $e_i=0$ for $i>3$. The entries of the Jacobian matrix are given by the coefficients of $[23i]$ for $i=4,\dots,9$, and so those coefficients are $0$. This means that the $1$-parameter subgroup with weight $(-2,-2,-2, 1,1,1, 1,1,1)$ destabilizes $u$, which contradicts that $u$ is stable. So $Y$ is indeed a smooth hypersurface away from $X$.
\end{proof}

Recall that given a variety $X$, its Albanese variety is an abelian variety satisfying a certain universal property (which will not relevant for our purposes).

\begin{proposition} \label{prop:torsor}
  $X$ is a torsor over its Albanese variety $J$ and $\cO_X(1)$ is a $(3,3)$-polarization, i.e., becomes a $(3,3)$-polarization upon passing to the algebraic closure of $\bk$.

  Furthermore, $J$ is indecomposable as a polarized variety, i.e., is not a product of two elliptic curves upon passing to the algebraic closure of $\bk$.
\end{proposition}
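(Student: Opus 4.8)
The plan is to establish everything over $\ol{\bk}$ first, then descend the torsor structure. Since $X$ is smooth of dimension $2$ (by the first lemma of this section), over $\ol{\bk}$ we may invoke the description of $X$ from the complex-analytic story in \cite{GS, GSW, ortega, minh}: there $X$ is shown to be an abelian surface (in fact a torsor over the Jacobian of the associated genus $2$ curve), with $\cO_X(1)$ a $(3,3)$-polarization. Concretely, the degree and canonical bundle of $X$ can be computed from the resolution of $\cO_X$ as a sheaf on $\bP(V_9^*)$ built from the Pfaffian complex associated to the family $\Phi$ of skew-symmetric matrices; one reads off that $\omega_X$ is trivial and $h^0(X,\cO_X(1)) = 9$ with the linear system giving the standard $(3,3)$-embedding of an abelian surface in $\bP^8$. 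Since these computations are cohomological and depend only on the formal structure of the Pfaffian resolution (and on $\det$ of the tautological quotient bundle being $\cO(1)$, exactly as noted in the proof of Lemma~\ref{lem:coble}), they go through over any field; I would cite \cite{GSW} for the detailed Chern class bookkeeping rather than redo it. This gives: $X_{\ol\bk}$ is an abelian surface with a principal... — more precisely, a $(3,3)$-polarized abelian surface, hence $\cO_X(1)$ descends the principal polarization of an abelian surface $A_{\ol\bk}$ with $X_{\ol\bk} \cong A_{\ol\bk}$ as varieties, and $A_{\ol\bk}$ carries a principal polarization $\Theta$ with $3\Theta \equiv \cO_X(1)$.

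Next I would identify $A_{\ol\bk}$ with $\rJ(C)_{\ol\bk}$. This is where Proposition~\ref{prop:S-equiv} and the discussion following Proposition~\ref{prop:chevalley} enter: stable elements are, up to projective S-equivalence, the $\gamma_c$ with $C_c$ smooth (Proposition~\ref{prop:stable-smooth}), and the classical picture over $\bC$ identifies the surface $X$ attached to $\gamma_c$ with $\rJ(C_c)$ as a principally polarized abelian variety. Since $X$ is, by construction, $\PGL(V_9)$-equivariantly attached to $[u]$, and $\ol\bk$-points of the moduli of such data are covered by the $\gamma_c$, the identification $A_{\ol\bk} \cong \rJ(C)_{\ol\bk}$ as ppav's follows over $\ol\bk$. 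Over $\ol\bk$, the Albanese of an abelian variety is itself, so $J := \mathrm{Alb}(X)$ satisfies $J_{\ol\bk} \cong \rJ(C)_{\ol\bk}$, and $X$ is a torsor over $J$: the Albanese map $X \to J$ becomes, over $\ol\bk$, translation of $A_{\ol\bk}$, so after base change $X$ acquires a free transitive $J$-action, and this action — being canonical via the Albanese — is defined over $\bk$. Thus $X$ is a $J$-torsor over $\bk$ and $\cO_X(1)$ is a $(3,3)$-polarization in the required sense.

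Finally, indecomposability: I would argue that if $\rJ(C)_{\ol\bk} \cong E_1 \times E_2$ as a polarized abelian surface, then $C$ would be reducible or singular — a smooth genus $2$ curve has an indecomposable Jacobian (its theta divisor is the irreducible curve $C$ itself), contradicting $C_c$ smooth from Proposition~\ref{prop:stable-smooth}. Alternatively, and perhaps more self-containedly, a product $E_1 \times E_2$ with the product principal polarization would make the associated $(3,3)$-theta embedding factor through a Segre-type image of $\bP^2 \times \bP^2$ or contain a line, whereas $X$ (being cut out by Pfaffian equations as above) contains no lines — this can be checked on the explicit model. I expect the main obstacle to be making the identification $\mathrm{Alb}(X)_{\ol\bk} \cong \rJ(C)_{\ol\bk}$ genuinely field-independent rather than merely quoting the characteristic-$0$ literature: one must be careful that the Pfaffian-complex computation of $X$'s numerical invariants, the triviality of $\omega_X$, and the matching with the genus $2$ curve from Proposition~\ref{prop:S-equiv} all survive in positive characteristic. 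Reducing to the universal family over $\bZ[1/3]$ (or over the relevant DVR $R$, as done repeatedly above via semicontinuity and lifting) and invoking the characteristic-$0$ fiber should handle this, but the bookkeeping is the delicate part.
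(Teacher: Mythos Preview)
Your overall strategy (reduce to characteristic $0$ via the DVR $R$, where \cite{GSW} applies) is exactly what the paper does, and you correctly flag this at the end. But two steps in your write-up are genuine gaps rather than bookkeeping.

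\textbf{The abelian-surface step.} Your first paragraph asserts that ``these computations are cohomological and depend only on the formal structure of the Pfaffian resolution\ldots they go through over any field,'' and concludes that $X_{\ol\bk}$ is an abelian surface. This is the crux, and it is not true as stated: knowing $\omega_X\cong\cO_X$ and the degree (or $h^0(\cO_X(1))=9$) does \emph{not} force $X$ to be an abelian surface in positive characteristic. The paper's argument is: lift $u$ to $u_R$ over the DVR, get a smooth family $\cX_R$ whose generic fiber $\cX_K$ is known (by \cite{GSW}) to be an abelian-surface torsor; then the $\ell$-adic Betti numbers of $\cX_K$ and $X=\cX_\bk$ agree by smooth proper base change, and together with $\omega_X=\cO_X$ this lets one invoke the Bombieri--Mumford characteristic-$p$ Enriques classification \cite{bombieri-mumford} to conclude $X_{\ks}$ is an abelian surface. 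The Betti-number comparison and the Bombieri--Mumford input are the missing ideas; the Pfaffian resolution alone does not see them. (Similarly, the $(3,3)$-polarization is not read off formally: the paper finds a cube root of $\cO_\cX(1)$ on the generic fiber after a finite extension and extends it across $\Spec R$ by properness of the Picard scheme.)

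\textbf{Indecomposability.} Your primary argument---a smooth genus $2$ curve has an irreducible theta divisor, hence indecomposable Jacobian---is circular here: at this point in the paper no curve $C$ has been attached to $u$, and no identification $J\cong\rJ(C)$ has been made (that comes only after Lemma~\ref{lem:ZL}). The paper's argument is self-contained and purely about the embedding: if $J_{\ol\bk}\cong E\times E'$ as polarized varieties, then $X\subset\bP^8$ is the Segre image of two plane cubics, which cannot be cut out set-theoretically without two independent cubic equations; but $X$, being the singular locus of a cubic hypersurface $Y$, is cut out set-theoretically by the quadric partials of $Y$ together with the single cubic $Y$. Your ``alternatively'' sketch gestures at this but with the wrong invariant (lines rather than counting cubic generators).

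Finally, your second paragraph's attempt to identify $\mathrm{Alb}(X)$ with $\rJ(C)$ is extraneous to the proposition as stated and imports later material; once $X_{\ol\bk}$ is known to be an abelian surface, the torsor-over-Albanese claim is immediate.
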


\begin{proof}
Let $R$ be a DVR whose residue field is $\bk$ and whose fraction field $K$ is of characteristic $0$. Pick a lift $u_R$ of $u$ to $\bigwedge^3(R^9)$; then $u_K$ is a stable element of $\bigwedge^3(K^9)$ since being non-stable is a closed condition. The construction that we just discussed gives a surface $\cX_R$ over $R$ whose generic fiber $\cX_K$ is a torsor over its Albanese variety \cite[Theorem 5.5]{GSW} and whose special fiber is $\cX_\bk = X$. Let $\ell$ be a prime different from the characteristic of $\bk$. Then the $\ell$-adic Betti numbers of $\cX_K$ and $X$ are the same \cite[Corollary VI.4.2]{milne}. We also know that $\omega_X = \cO_X$ (from the locally free resolution of $\cO_X$ in \cite[\S 5.2]{GSW}). So over $\ks$, $X$ is isomorphic to an abelian surface \cite{bombieri-mumford}. In particular, $X$ is a torsor over its Albanese variety (see the proof of \cite[Theorem 3.1]{GSW}).

The statement about $\cO_X(1)$ is proven in \cite[Proposition 5.6]{GSW} for a field of characteristic $0$. In particular, after base changing to a finite extension of $R$, we can find a cube root of $\cO_X(1)$ over the generic fiber. This can be extended to a line bundle over the whole family whose cube is $\cO_\cX(1)$ (using properness of the Picard variety), which means that it is a $(3,3)$-polarization over the special fiber as well.

For the last statement, note that if $J$ is isomorphic to a product of elliptic curves $E,E'$ as a polarized variety (after passing to the algebraic closure of $\bk$), then the embedding of $X$ into $\bP^8$ is the Segre embedding of the product of $E$ and $E'$ in their plane embeddings. But $X$ the singular locus of a cubic hypersurface, and hence can be set-theoretically cut out by its partial derivatives (quadrics) together with the equation of the cubic. The Segre embedding of two plane cubics requires two cubic equations to be cut out set-theoretically, so they cannot be the same.
\end{proof}

Since $\cO_X(1)$ is a $(3,3)$-polarization, the action of $J[3]$ on $X$ extends to an action of $J[3]$ on $\bP(V_9^*)$. Let $X^i$ be the Picard variety of line bundles on $X$ whose polarization is of type $(i,i)$. By \cite[Theorem 3.6]{GS} (although it is stated in characteristic $0$, the proof does not rely on this assumption, except for the reference to \cite[Proposition 5.6]{GSW}, but see the last paragraph of the previous proof to work around this), we have an isomorphism 
\begin{align*}
X(\ks) &\to X^1(\ks)\\
x &\mapsto \bP(\ker \Phi(x)) \cap X(\ks).
\end{align*}
Since $\Phi$ is defined over $\bk$, this map descends to an isomorphism $X \to X^1$ defined over $\bk$. Furthermore, we have a cubing map $X^1 \to X^3$ and $\cO_X(1) \in X^3$ gives us an isomorphism $X^3 \cong J$. Combining this, we have a map
\[
\psi \colon X \to J
\]
which gives $X$ the structure of a $3$-covering of $J$. 

The preimage of $\cO_X(1)$ under the cubing map $X^1 \to X^3$ is a torsor for $J[3]$. Each geometric point represents a line bundle $\cL$ such that $\rh^0(X; \cL) = 1$, and the zero locus $Z(\cL)$ of the unique, up to scalar multiple, section is a theta divisor of $X$. So $Z(\cL)$ is a genus $2$ curve whose Jacobian is $X$. 

\begin{lemma} \label{lem:ZL}
Under the isomorphism $X \to X^1$, the image of $Z(\cL)$ contains the point representing $\cL$. Furthermore, this point is a Weierstrass point of $Z(\cL)$.
\end{lemma}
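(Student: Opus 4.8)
Throughout let $\nu\colon X\xrightarrow{\ \sim\ }X^1$ be the isomorphism of \cite[Theorem~3.6]{GS}, so that by construction $\nu(x)=\cO_X\bigl(\bP(\ker\Phi(x))\cap X\bigr)$, and set $x_0:=\nu^{-1}(\cL)$. Since $h^0(X,\cL)=1$ we have $|\cL|=\{Z(\cL)\}$, so $\bP(\ker\Phi(x_0))\cap X=Z(\cL)$, and ``the point representing $\cL$'' means exactly $x_0$. The lemma thus asserts (i) $x_0\in Z(\cL)$, and (ii) $x_0$ is a Weierstrass point of the genus $2$ curve $Z(\cL)$.

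For (i): because $\Phi$ is obtained from $u\in\bigwedge^3 V_9$ by comultiplication, the $V_9$-valued bilinear form $(v,w)\mapsto\Phi(v)(w)$ on $V_9^*$ is alternating — this is the same total antisymmetry of $u$ that makes each $\Phi(v)$ skew. Hence $\Phi(x_0)(x_0)=0$, i.e.\ $x_0\in\bP(\ker\Phi(x_0))$; combined with $x_0\in X$ this gives $x_0\in Z(\cL)$. (The argument applies to any point of $X^1$.) The same identity also yields $\Phi(y)(x_0)=-\Phi(x_0)(y)$, hence $\langle x_0,y\rangle\subseteq\ker\Phi(y)$ for every $y\in\bP(\ker\Phi(x_0))$, which reproves that this $\bP^4$ lies in the cubic $Y$ and gives the ``symmetric incidence'' $y\in Z(\cL)\implies x_0\in\bP(\ker\Phi(y))\cap X$; both facts are used below.

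For (ii) the plan is first to reduce to a statement about the hyperelliptic involution $\iota$ of $Z(\cL)$. By adjunction on $X$ (with $\omega_X\cong\cO_X$), $\cL|_{Z(\cL)}\cong\omega_{Z(\cL)}$, hence $\cO_X(1)|_{Z(\cL)}\cong\cL^{\otimes3}|_{Z(\cL)}\cong\omega_{Z(\cL)}^{\otimes3}$. From the sequence $0\to\cL^{\otimes2}\to\cO_X(1)\to\omega_{Z(\cL)}^{\otimes3}\to0$ and $H^1(X,\cL^{\otimes2})=0$ one gets that $V_9=H^0(X,\cO_X(1))$ surjects onto $H^0(Z(\cL),\omega_{Z(\cL)}^{\otimes3})$ with $4$-dimensional kernel $H^0(X,\cL^{\otimes2})$; dualizing identifies $\ker\Phi(x_0)$ with $H^0(Z(\cL),\omega_{Z(\cL)}^{\otimes3})^*$, so that $Z(\cL)\hookrightarrow\bP(\ker\Phi(x_0))\cong\bP^4$ is the tricanonical embedding. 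Using the double cover $\pi\colon Z(\cL)\to\bP^1$ (so $\omega_{Z(\cL)}=\pi^*\cO_{\bP^1}(1)$ and $\pi_*\cO_{Z(\cL)}=\cO_{\bP^1}\oplus\cO_{\bP^1}(-3)$), the space $H^0(Z(\cL),\omega_{Z(\cL)}^{\otimes3})$ contains the $\iota$-invariant subspace $\pi^*H^0(\bP^1,\cO_{\bP^1}(3))$ of dimension $4$ with $1$-dimensional complement; in characteristic $\ne2$ this complement is the $\iota$-anti-invariant line, so the fixed locus of $\iota$ on $\bP^4$ is a hyperplane $\bP^3$ together with one isolated point, the latter lying off $Z(\cL)$ because $\pi^*|\cO_{\bP^1}(3)|$ is base-point-free, and $Z(\cL)\cap\bP^3$ is exactly the set of Weierstrass points. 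Thus (ii) is equivalent to $\iota(x_0)=x_0$, i.e.\ to: $Z(\cL)\subset X$ is symmetric as a theta divisor about the point $x_0$.

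The hard part is this last equivalence, and it is genuinely non-formal: one must use that $\cL$ is a cube root of $\cO_X(1)$ — this is the only thing distinguishing $x_0$ among the points of $Z(\cL)$, and in general the involution of $X$ with centre $x_0$ is not even linear on $\bP(V_9^*)$. I see two routes. The direct one is to transport the problem to $X^1$: by the symmetric incidence above, $\nu(Z(\cL))\subset X^1$ is the theta divisor $\Theta_{x_0}:=\{\cL'\in X^1:\text{the section of }\cL'\text{ vanishes at }x_0\}$, it contains $\cL=\nu(x_0)$ by (i), and one must show $\Theta_{x_0}$ is symmetric about $\cL$ — equivalently that, for $\cL'\in\Theta_{x_0}$, the reflected bundle $\cO_X(1)\otimes(\cL\otimes\cL')^{-1}$ again has its section vanishing at $x_0$ — and it is precisely here that $\cL^{\otimes3}\cong\cO_X(1)$ must be fed in. The safer route, in keeping with the rest of the paper, is to lift $u$ over the DVR $R$ to characteristic $0$, where $Z(\cL)$, $x_0$ and $\nu$ are the objects of \cite{GS,GSW} and the Weierstrass assertion is known (alternatively one checks it directly in the normal form of Proposition~\ref{prop:S-equiv}, taking a cube-root point to be a coordinate point of $\bP(V_9^*)$ and matching it with the point $P_c$ of $C_c$), and then specialize, as in the proofs of Propositions~\ref{prop:refl-posdim} and~\ref{prop:torsor}; what remains is only to check that $\nu$, $Z(\cL)$ and $x_0$ behave well under specialization, which the preceding discussion already provides. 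Either way the crux is the same: recognizing $x_0$ as a \emph{fixed} point of $\iota$ rather than an arbitrary point of $Z(\cL)$.
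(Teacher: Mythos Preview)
Your argument for (i) is correct and is exactly the paper's: $x_0\in\ker\Phi(x_0)$ because $\Phi$ is the contraction of an alternating trilinear form.

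For (ii), your second route---lift over a DVR to characteristic~$0$, invoke \cite{GS}, then specialize---is the paper's approach as well, though the paper carries out the specialization more explicitly: with $\cC\to\Spec R$ the family of curves and $\cP$ the section, one observes that $\cO_\cC(2\cP)$ agrees with $\Omega^1_{\cC/R}$ on the generic fibre (where the Weierstrass property is known), hence $\cO_\cC(2\cP)=\Omega^1_{\cC/R}$ globally, and restricting to the special fibre gives $2P\sim K_{Z(\cL)}$. This is cleaner than ``$Z(\cL)$, $x_0$, $\nu$ behave well under specialization'', and in particular sidesteps your characteristic-$2$ caveat about eigenspaces of~$\iota$.

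Where your write-up goes astray is the framing of the first route. You assert that ``one must use that $\cL$ is a cube root of $\cO_X(1)$---this is the only thing distinguishing $x_0$'', but in fact the paper proves the \emph{stronger} statement that for \emph{every} $x\in X$ the point $x$ is a Weierstrass point of $\bP(\ker\Phi(x))\cap X$. The reason is that $\nu\colon X\to X^1$ is $J$-equivariant, so $D_{x+b}=t_b(D_x)$ for all $b\in J$; hence the condition ``$x$ is a Weierstrass point of $D_x$'' is translation-invariant and it suffices to verify it at a single point, which is what \cite[Remark~3.15]{GS} does. The cube-root hypothesis plays no role. (In the Poincar\'e-divisor language of \S\ref{sec:trivector} this is transparent: the fibre over $\cL_1$ is $\{\cL_1(Q-P):Q\in C\}$, on which $\cL_1$ corresponds to $Q=P$, a Weierstrass point of $C$ regardless of~$\cL_1$.) So your ``hard part'' dissolves once one notices this invariance; the direct route you sketch, feeding in $\cL^{\otimes3}\cong\cO_X(1)$, is not needed.
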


\begin{proof}
The first statement is equivalent to $x \in \ker \Phi(x)$. But this follows from the fact that $\Phi(x)$ is the contraction of an alternating trilinear form on $V_9$ by $x$. 

For the second statement, let $P$ be the point on $Z(\cL)$. First assume that the characteristic of $\bk$ is $0$. Then we can check more generally that for any point $x \in X$, we have that $x$ is a Weierstrass point of $\bP(\ker \Phi(x)) \cap X$. For this, it suffices to check a single point since the property is invariant under translation, and this is done in \cite[Remark 3.15]{GS}. 

For the general case, pick a DVR $R$ as in the proof of Proposition~\ref{prop:torsor} and a lift $u_R$ of $u$ to $\bigwedge^3(R^9)$. Our construction is valid in families, so we get a curve $\cC$ over $R$ together with a section $\cP \colon \Spec(R) \to \cC$. Since $\cM = \cO_\cC(\cP)^{\otimes 2}$ extends the canonical bundle on $\cC_K$, we see that $\cM = \Omega^1_{\cC/R}$. In particular, $\cM_\bk = \omega_{Z(\cL)}$, and so $P$ is a Weierstrass point.
\end{proof}

For any two choices $\cL, \cL'$, $Z(\cL)$ and $Z(\cL')$ differ by translation by an element of $J[3]$, so they have the same image under $\psi$. So the reduced image of the union of these curves under $\psi$ is a genus $2$ curve $C \subset J$ (defined over $\bk$) whose Jacobian is $J$ and $P := \cO_X(1) \in C(\bk)$ is a Weierstrass point. 

Using basic properties of finite Heisenberg group schemes, we know that the inclusion $J[3] \subset \PGL(V_9)$ coming from the translation action of $J[3]$ on $\bP(V_9^*)$ lifts to an inclusion $J[3] \subset \SL(V_9) / \bmu_3$.

\begin{lemma} \label{lem:galois}
The kernel of the map of pointed sets $\rH^1(\bk; \SL(V_9)/\bmu_3) \to \rH^1(\bk; \PGL(V_9))$ is trivial, i.e., nontrivial cohomology classes map to nontrivial cohomology classes.
\end{lemma}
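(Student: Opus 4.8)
The plan is to deduce this from the fact that $\SL(V_9)/\bmu_3$ is an intermediate quotient between $\SL(V_9)$ and $\PGL(V_9)$. First I would identify $\PGL(V_9)$ with $\SL(V_9)/\bmu_9$, where $\bmu_9$ is the scheme-theoretic center of $\SL(V_9)$; then the homomorphism in question is the quotient map $\SL(V_9)/\bmu_3 \to \SL(V_9)/\bmu_9$, whose kernel is $\bmu_9/\bmu_3$ and is central (being the image of the center of $\SL(V_9)$). So we have a short exact sequence of group schemes $1 \to \bmu_9/\bmu_3 \to \SL(V_9)/\bmu_3 \to \PGL(V_9) \to 1$, and the associated exact sequence of pointed sets in flat cohomology identifies the kernel of $\rH^1(\bk; \SL(V_9)/\bmu_3) \to \rH^1(\bk; \PGL(V_9))$ with the image of $\rH^1(\bk; \bmu_9/\bmu_3) \to \rH^1(\bk; \SL(V_9)/\bmu_3)$. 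Thus it suffices to prove that this last map is trivial.

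To do that, I would factor the central inclusion $\bmu_9 \hookrightarrow \SL(V_9)$ through the quotient as $\bmu_9 \xrightarrow{\alpha} \bmu_9/\bmu_3 \xrightarrow{\beta} \SL(V_9)/\bmu_3$, where $\beta$ is the inclusion above and $\beta\alpha$ is the composite $\bmu_9 \hookrightarrow \SL(V_9) \twoheadrightarrow \SL(V_9)/\bmu_3$. Then $(\beta\alpha)_*$ on $\rH^1$ factors through $\rH^1(\bk; \SL(V_9))$, which is trivial since every $\SL_9$-torsor over a field splits; hence $\beta_* \circ \alpha_*$ is trivial. It remains to check that $\alpha_* \colon \rH^1(\bk; \bmu_9) \to \rH^1(\bk; \bmu_9/\bmu_3)$ is surjective: identifying $\bmu_9/\bmu_3$ with $\bmu_3$ via cubing turns $\alpha$ into the cubing homomorphism $\bmu_9 \to \bmu_3$, and comparing the Kummer sequences $1 \to \bmu_9 \to \bG_m \xrightarrow{9} \bG_m \to 1$ and $1 \to \bmu_3 \to \bG_m \xrightarrow{3} \bG_m \to 1$ shows the induced map on $\rH^1$ is the natural projection $\bk^\times/(\bk^\times)^9 \twoheadrightarrow \bk^\times/(\bk^\times)^3$. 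Surjectivity of $\alpha_*$ combined with triviality of $\beta_*\alpha_*$ forces $\beta_*$ itself to be trivial, which is exactly what we want. Working with flat cohomology throughout is harmless, since the Kummer sequence is fppf-exact in every characteristic, $\rH^1(\bk;\bG_m)=0$ by Hilbert 90, and $\SL(V_9)$ is smooth with vanishing $\rH^1$.

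This is essentially a formal argument, so I do not expect a serious obstacle. The one step that deserves care — and the only place I could see a slip — is pinning down the map $\alpha_*$ on $\rH^1$: one must verify it is the evident surjection of Kummer groups and not, say, multiplication by $3$ (which would be zero and would break the argument), and this is settled by the short diagram chase comparing the two Kummer sequences indicated above.
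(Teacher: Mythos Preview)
Your argument is correct, but it takes a different route from the paper's. The paper compares the two central extensions
\[
1 \to \bmu_3 \to \SL_9 \to \SL_9/\bmu_3 \to 1
\qquad\text{and}\qquad
1 \to \bG_m \to \GL_9 \to \PGL_9 \to 1,
\]
and uses the resulting commutative square of boundary maps
\[
\xymatrix{
\rH^1(\bk;\SL_9/\bmu_3) \ar[r] \ar[d] & \rH^2(\bk;\bmu_3) \ar[d] \\
\rH^1(\bk;\PGL_9) \ar[r] & \rH^2(\bk;\bG_m).
}
\]
Both horizontal maps have trivial kernel (since $\rH^1$ of $\SL_9$ and $\GL_9$ vanish), and the right vertical map has trivial kernel (since $\bG_m/\bmu_3\cong\bG_m$ and $\rH^1(\bk;\bG_m)=0$); a diagram chase finishes. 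By contrast, you work entirely in $\rH^1$: you use the extension $1\to\bmu_9/\bmu_3\to\SL_9/\bmu_3\to\PGL_9\to 1$, identify the kernel with the image of $\rH^1(\bk;\bmu_9/\bmu_3)$, and kill that image by factoring through $\rH^1(\bk;\SL_9)=1$ after checking (via Kummer) that $\rH^1(\bk;\bmu_9)\to\rH^1(\bk;\bmu_9/\bmu_3)$ is surjective. The ingredients are the same (Hilbert 90 and the vanishing of $\rH^1(\bk;\SL_9)$); the paper's version is a bit more conceptual via Brauer classes, while yours has the virtue of never touching $\rH^2$. Your verification that the cubing map induces the natural surjection $\bk^\times/(\bk^\times)^9\twoheadrightarrow\bk^\times/(\bk^\times)^3$ is the one nontrivial check, and your diagram of Kummer sequences handles it correctly.
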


\begin{proof}
We have the following commutative diagram
\[
\xymatrix{ 1 \ar[r] & \bmu_3 \ar[r] \ar[d] & \SL_9 \ar[r] \ar[d] & \SL_9 / \bmu_3 \ar[r] \ar[d] & 1 \\
1 \ar[r] & \bG_m \ar[r] & \GL_9 \ar[r] & \PGL_9 \ar[r] & 1 }
\]
which gives the following commutative diagram
\[
\xymatrix{
\rH^1(\bk; \SL_9 / \bmu_3) \ar[r] \ar[d] & \rH^2(\bk; \bmu_3) \ar[d] \\
\rH^1(\bk; \PGL_9) \ar[r] & \rH^2(\bk; \bG_m) }
\]
The horizontal maps have trivial kernel since $\rH^1(\bk; \SL_9) = \rH^1(\bk; \GL_9) = 1$ and the right vertical map has trivial kernel since $\bG_m / \bmu_3 \cong \bG_m$ and $\rH^1(\bk; \bG_m) = 1$. So we conclude that the map $\rH^1(\bk; \SL_9/\bmu_3) \to \rH^1(\bk; \PGL_9)$ has trivial kernel.
\end{proof}

Recall that $3$-coverings $\psi \colon X \to J$ are classified by cohomology classes $[\psi] \in \rH^1(\bk; J[3])$. To get the cohomology class, note that $\psi^{-1}(0)$ is a torsor under $J[3]$.

\begin{lemma} \label{lem:kereta}
$[\psi] \in \ker(\rH^1(\bk; J[3]) \to \rH^1(\bk; \SL(V_9)/\bmu_3))$.
\end{lemma}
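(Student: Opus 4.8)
The plan is to reduce, via Lemma~\ref{lem:galois}, to a statement about Brauer--Severi varieties, and then to produce a rational point by twisting the embedding $X\subset\bP(V_9^*)$.

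First, since the inclusion $J[3]\hookrightarrow\SL(V_9)/\bmu_3$ lifts the inclusion $\rho\colon J[3]\hookrightarrow\PGL(V_9)$ coming from the translation action of $J[3]$ on $\bP(V_9^*)$, the pushforwards of $[\psi]$ to $\rH^1(\bk;\SL(V_9)/\bmu_3)$ and to $\rH^1(\bk;\PGL(V_9))$ are compatible with the map $\rH^1(\bk;\SL(V_9)/\bmu_3)\to\rH^1(\bk;\PGL(V_9))$, which has trivial kernel by Lemma~\ref{lem:galois}. Hence it suffices to show that $\rho_*[\psi]$ is trivial in $\rH^1(\bk;\PGL(V_9))$. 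That group injects into $\operatorname{Br}(\bk)$, and the composite $\rH^1(\bk;J[3])\to\operatorname{Br}(\bk)$ is a group homomorphism (it is the obstruction map of the central extension of $J[3]$ by $\bG_m$ pulled back from $\GL(V_9)\to\PGL(V_9)$). So it is enough to show that the Brauer class of $-\rho_*[\psi]$ vanishes, i.e.\ that the Brauer--Severi variety obtained by twisting $\bP(V_9^*)$ by the inverse of the $J[3]$-torsor $\psi^{-1}(0)$ has a $\bk$-rational point.

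To produce the point, make the twist explicit over $\ks$. Choose $t_0\in\psi^{-1}(0)(\ks)$; translation by $t_0$ is a $\ks$-isomorphism of $J$-torsors $J_{\ks}\xrightarrow{\sim}X_{\ks}$, under which the Galois action on $X$ becomes $v\mapsto{}^\sigma v+d_\sigma$, where $d_\sigma:={}^\sigma t_0-t_0\in J[3](\ks)$ is a cocycle representing $[\psi]$. Postcomposing with the $\bk$-embedding $\iota\colon X\hookrightarrow\bP(V_9^*)$ gives $\ol\iota\colon J_{\ks}\to\bP(V_9^*)_{\ks}$, and because $\iota$ is defined over $\bk$ and is equivariant for the translation action of $J[3]$ on $X$ and the action $\rho$ on $\bP(V_9^*)$, one gets
\[
{}^\sigma\!\bigl(\ol\iota(v)\bigr)=\rho(d_\sigma)\cdot\ol\iota\bigl({}^\sigma v\bigr)\qquad(v\in J(\ks)).
\]
Taking $v=0$ shows that $\ol\iota(0)$ is fixed by the twisted Galois action on $\bP(V_9^*)$ defining the Brauer--Severi variety of $-\rho_*[\psi]$; equivalently, $\ol\iota$ descends to a $\bk$-morphism from $J$ into that variety, and the image of $0\in J(\bk)$ is the desired rational point. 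Together with the first paragraph, this finishes the proof.

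I expect the real work to be bookkeeping rather than ideas. One must check that the action $\rho$ appearing here is literally the inclusion $J[3]\subset\PGL(V_9)$ of Lemma~\ref{lem:galois}, so that the triangle of $\rH^1$'s commutes; identify the twist of $\bP(V_9^*)$ by $\psi^{-1}(0)$ with the Brauer--Severi variety of the pushed-forward class; and keep the inverse/sign conventions for cocycles and twists consistent (these are harmless, since triviality of $\rho_*[\psi]$ is detected by a group homomorphism into $\operatorname{Br}(\bk)$). The one genuinely separate point is characteristic $3$, where $J[3]$ is not étale: there one runs the same argument with fppf cocycles over a finite flat $\bk$-algebra that splits $\psi^{-1}(0)$ in place of Galois cocycles, or reduces to characteristic $0$ by lifting $u$ over the DVR $R$ as in earlier proofs.
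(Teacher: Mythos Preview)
Your argument is correct and is essentially the paper's proof unpacked into cocycles. The paper reduces via Lemma~\ref{lem:galois} exactly as you do, then observes in one line that the pushforward $\PGL(V_9)$-torsor $\psi^{-1}(0)\times^{J[3]}\PGL(V_9)$ is trivial because $\psi^{-1}(0)$ already sits $J[3]$-equivariantly inside the honest projective space $\bP(V_9^*)$ over~$\bk$; your rational point $\bar\iota(0)=\iota(t_0)$ is precisely a witness to this, so the detour through $\operatorname{Br}(\bk)$ and the sign bookkeeping become unnecessary once one works with torsors rather than cocycles. (Your concern about characteristic~$3$ is addressed in the paper by working with flat cohomology throughout, so the torsor-level argument applies uniformly without a separate reduction.)
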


\begin{proof}
By Lemma~\ref{lem:galois}, it suffices to show that $[\psi]$ is in the kernel of the composition $\rH^1(\bk; \rJ(C)[3]) \to \rH^1(\bk; \PGL(V_9))$. The map sends the $J[3]$-torsor $\psi^{-1}(0)$ to the $\PGL(V_9)$-torsor $\psi^{-1}(0) \times^{J[3]} \PGL(V_9)$. The data of this $\PGL(V_9)$-torsor is equivalent to the embedding $\psi^{-1}(0) \subset \bP(V_9^*)$. Projective space represents the trivial $\PGL$-torsor, so the image of $[\psi]$ in $\rH^1(\bk; \PGL(V_9))$ is trivial. 
\end{proof}

The trivectors $\gamma_c$ described in Proposition~\ref{prop:S-equiv} are particularly nice for
this construction.  Recall from Proposition~\ref{prop:stable-smooth} that $\gamma_c$ is stable
whenever the corresponding curve $C_c$ is smooth.

\begin{proposition} \label{prop:c-curve}
  Suppose that $C_c$ is smooth.  Then the pair $(C,P)$ corresponding to
  $\gamma_c$ is isomorphic to $(C_c,P_c)$, and the corresponding torsor
  $\psi_c^{-1}(0)$ is trivial.
\end{proposition}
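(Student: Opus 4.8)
The plan is to verify both assertions by an explicit computation with the trivector $\gamma_c$, taking advantage of the triangular structure of $\gamma_c$ and of the monomials appearing in \eqref{eqn:cartan}-free form of the standard basis. For the first assertion, I would begin by computing the skew-symmetric matrix family $\Phi$ attached to $\gamma_c$ via comultiplication: each basis monomial $[ijk]$ contributes entries $\pm e_k$ (resp.\ $\pm e_j$, $\pm e_i$) to the $(i,j)$, $(i,k)$, $(j,k)$ positions, so $\Phi$ is a concrete $9\times 9$ matrix with linear entries in $e_1,\dots,e_9$ and coefficients $1$ or $c_i$. The locus $Y = \{\rank \Phi \le 6\}$ is then cut out by the $8\times 8$ Pfaffian, which by Lemma~\ref{lem:coble} is a cubic $F_c$ in the $e_i$. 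I would extract this cubic, and then compute the singular locus $X = \{\rank \Phi \le 4\}$ as the $6\times6$ Pfaffian locus. The key point is to identify $X$ with (a translate of) the image of $C_c$ under the Abel--Jacobi-type map into $\bP(V_9^*)$; since by Proposition~\ref{prop:torsor} $X$ is a torsor over its Albanese $J$ with $J \cong \rJ(C)$ over $\ol\bk$, and by the construction preceding Lemma~\ref{lem:ZL} the curve $C$ recovered from $X$ is the reduced image of the theta divisors, it suffices to check that the pair $(C,P)$ obtained this way is isomorphic to $(C_c,P_c)$. By Proposition~\ref{prop:S-equiv} (the only part of it we are allowed to invoke: that isomorphic $(C_c,P_c)$ give projectively equivalent trivectors, and conversely the classification of S-equivalence), and by the fact that the whole construction is $\PGL(V_9)$-invariant, it is enough to match invariants. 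Concretely I would use Proposition~\ref{prop:chevalley} and Proposition~\ref{prop:homogeneity}: the degree $12,18,24,30$ invariants evaluated on $\gamma_c$ are weighted-homogeneous polynomials in $c_3,\dots,c_{30}$, and the same invariants (the Igusa-type invariants of $C$) evaluated on the curve recovered from $X$ must be computed and shown to agree up to the known normalization.

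For the triviality of the torsor $\psi_c^{-1}(0)$, I would argue that $\gamma_c$ visibly admits a rational point in the relevant torsor. Recall that $\psi^{-1}(0)$ is identified (via the chain $X \cong X^1 \to X^3 \cong J$ from before Lemma~\ref{lem:kereta}) with the $J[3]$-torsor of line bundles $\cL$ on $X$ of polarization type $(1,1)$ whose cube is $\cO_X(1)$; by Lemma~\ref{lem:ZL} each such $\cL$ corresponds to a point of $X$ which is a Weierstrass point of the associated theta divisor $Z(\cL)$. So triviality of the torsor amounts to exhibiting a \emph{$\bk$-rational} such point. The natural candidate is the point $P = \cO_X(1)$ itself viewed inside $X$ via $X \cong X^3 \cong J$ and the cubing map, but more directly: since $C_c$ has the rational Weierstrass point $P_c$ at infinity, the curve $C \cong C_c$ sitting inside $J$ has a rational Weierstrass point, and translating so that this point is the origin we get that one of the $27$ theta divisors through the origin (equivalently, one of the line bundles $\cL$) is defined over $\bk$. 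I would make this precise by noting that the section $\cP$ of Lemma~\ref{lem:ZL}'s proof, which in the $\gamma_c$ case is literally the point at infinity on $C_c$, furnishes the rational point of $\psi_c^{-1}(0)$.

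In practice the cleanest route to the second assertion may be to package both claims together: first establish that the $\PGL_9$-orbit of $\gamma_c$ contains a representative for which a distinguished coordinate point lies in $\psi^{-1}(0)$, by directly locating a rank-$4$ point of $\Phi$ whose associated $\bP^2\subset\bP(V_9^*)$ meets $X$ in a theta divisor and reading off that this point is a coordinate point $[1:0:\cdots:0]$ in suitable coordinates---the triangular form of $\gamma_c$ should make such a point manifest. Then rationality of that coordinate point over $\bk$ is automatic, giving triviality.

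I expect the \textbf{main obstacle} to be the identification of $(C,P)$ with $(C_c,P_c)$ in the first part: matching the two genus-$2$ curves requires either a long but mechanical computation of the $6\times 6$ Pfaffian ideal of $\Phi$ and a change of coordinates bringing it into the canonical planar form $x^2 + z^5 + \cdots$ of Proposition~\ref{prop:S-equiv}, or else an invariant-theoretic comparison via Propositions~\ref{prop:homogeneity} and~\ref{prop:chevalley} that still needs one nondegenerate check to pin down the isomorphism rather than just an isogeny. Once that is done, the triviality of $\psi_c^{-1}(0)$ follows quickly from the presence of the rational Weierstrass point $P_c$ and the description of $\psi^{-1}(0)$ as the torsor of cube roots of $\cO_X(1)$ with its distinguished rational element coming from $\cP$.
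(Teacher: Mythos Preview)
Your ``cleanest route'' paragraph is essentially the paper's approach, and you should commit to it rather than the invariant-matching strategy, which has a genuine gap. The problem with matching invariants is circularity: you propose to compare the $G$-invariants $I_{12},I_{18},I_{24},I_{30}$ evaluated on $\gamma_c$ with the Igusa-type invariants of the curve $C$ recovered from $X(\gamma_c)$, but nowhere in the paper (prior to this proposition) is it established that these two sets of invariants agree, or even that the recovered-curve construction $\gamma\mapsto (C,P)$ and the assignment $\gamma_c\mapsto (C_c,P_c)$ of Proposition~\ref{prop:S-equiv} define the same map on S-equivalence classes. That identification is precisely what Proposition~\ref{prop:c-curve} asserts. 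Moreover, even if you could match invariants, this would only yield an isomorphism over $\ol\bk$, whereas the proposition needs a $\bk$-isomorphism (and a $\bk$-rational point on the torsor).

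The paper instead writes down an explicit $\bk$-rational map
\[
f\colon (x,z)\longmapsto [0:0:-1:0:z:0:-z^2:x:z^3]
\]
from $C_c$ into $\bP^8$, observes that the image $C'$ lies in $\bP(\ker\Phi(P'))$ for the coordinate point $P'=[0:\cdots:0:1]$, and then verifies that $C'\subset X(\gamma_c)$ by exhibiting an explicit $5\times 9$ matrix whose rows span $\ker\Phi(f(x,z))$. This is exactly your ``rank-4 coordinate point whose associated projective subspace meets $X$ in a theta divisor'' idea, carried out concretely; the triangular form of $\gamma_c$ is what makes the kernel matrix writable by hand. Once $C'$ is identified as a theta divisor with Weierstrass point $P'$, the first claim follows.

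For the torsor, the paper does not argue via cube roots of $\cO_X(1)$ as you suggest (which, as you note, would presuppose the first claim). Instead it shows directly that $\psi_c(P')=0$ by proving $3\overline{C'}\sim \cO_{\bP^8}(1)|_X$: since $\overline{C'}$ induces a principal polarization, the restriction $\Pic^0(X)\to\Pic^0(\overline{C'})$ is an isomorphism, and both line bundles restrict to $\cL(3K_{\overline{C'}})$ on $\overline{C'}$---one because $\overline{C'}$ is tricanonically embedded in its $\bP^4$, the other by adjunction and $K_X=0$. This argument is short and avoids the circularity in your sketch.
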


\begin{proof}
  Let $C'$ be the image of $C_c$ in $\bP^8$ under the embedding
  \[
f \colon (x,z) \mapsto [0:0:-1:0:z:0:-z^2:x:z^3].
  \]
  The point $P':=[0:0:0:0:0:0:0:0:1]$ is a Weierstrass point of the closure
  of $C'$, and $C'$ is contained in $\bP(\ker(\Phi(P')))$, so the first
  claim will follow if we can show that $C'$ is contained in the rank 4
  locus. (Indeed, then $C'$ is contained in a theta divisor of
  $X(\gamma_c)$, so must be that theta divisor.)  We may verify that the
  subspace with basis
  \[
  \begin{pmatrix}
    1& 0& 0& z^2& x& -c_{12}z - c_{18}& 0& -c_9 x - c_{24}& 0\\
    0& 1& c_3& -z& 0& -z^2 - c_6 z& -x - c_{15}& -c_3x& 0\\
    0& 0& 1& 0& -z& 0& z^2& -x& 0\\
    0& 0& 0& 0& 0& 1& 0& -z& 0\\
    0& 0& 0& 0& 0& 0& 0& 0& 1
  \end{pmatrix}
  \]
  is in the kernel of $\Phi$ restricted to the point $f(x,z)$, and thus
  that $\Phi|_{C'}$ has rank at most $4$ as required.

  To see that $\psi_c(P')=0$, we need to show that $3\overline{C'}$ is a  section of $\cO_{\bP^8}(1)$.  Since $\overline{C'}$ induces a principal  polarization, the restriction map $\Pic^0(X(\gamma_c))\to \Pic^0(\overline{C'})$ is an isomorphism, and thus it suffices to show that $\cO_{\bP^8}(1)$ and $3\overline{C'}$ have the same restriction to $\overline{C'}$.  In fact, both restrictions are isomorphic to $\cL_{\overline{C'}}(3K_{\overline{C'}})$: the first because $\overline{C'}$ is tricanonically embedded in $\bP^4$, and the second by adjunction and the fact that $K_{X(\gamma_c)}=0$.
\end{proof}

\section{A construction of trivectors} \label{sec:trivector}

Let $C$ be a smooth genus $2$ curve with a marked Weierstrass point $P \in C(\bk)$. Let $\rJ^1(C)$ be the Picard variety of degree $1$ line bundles, and let $\rJ(C)$ be the Jacobian of degree $0$ line bundles. We identify $\rJ^1(C) \cong \rJ(C)$ via $\cL \mapsto \cL(-P)$. 

Define $V_9 = \rH^0(\rJ^1(C); 3\Theta)$. Then $\rJ(C) \subset \bP(V_9^*)$ is embedded by a $(3,3)$-polarization, denoted $\cO(1)$. Define a codimension $1$ subvariety (Poincar\'e divisor) of $\rJ(C) \times \rJ(C)$ by
\[
X = X_{C,P} = \{(\cL_1, \cL_2) \mid \hom_C(\cL_1, \cL_2(P)) \ne 0\}.
\]
The line bundle $\cO(1,1) \otimes \cO(-X)$ has divisor class $3\pi_1^* \Theta + 3\pi_2^* \Theta - \Theta_{\rm diag}$. This is the pullback of a principal polarization on $\rJ(C) \times \rJ(C)$ via the endomorphism 
\begin{align*}
\rJ(C) \times \rJ(C) &\to \rJ(C) \times \rJ(C)\\
(a,b) &\mapsto (2a+b, a+2b).
\end{align*}
The kernel of this map is the diagonal copy of $\rJ(C)[3]$ which has degree $81$. In particular, $\cO(1,1) \otimes \cO(-X)$ has a single cohomology group of dimension $9 = \sqrt{81}$. 

\begin{lemma} \label{lem:bilinear-eqn}
$\rh^0(\cO(1,1) \otimes \cO(-X)) = 9$ and all other cohomology groups vanish.
\end{lemma}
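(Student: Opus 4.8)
The plan is to compute the cohomology of the line bundle $\cN := \cO(1,1)\otimes\cO(-X)$ on $A := \rJ(C)\times\rJ(C)$ by exhibiting it as a pullback of an ample line bundle under the isogeny $\phi\colon A\to A$, $(a,b)\mapsto(2a+b,a+2b)$, and then invoking the Riemann–Roch theorem and the index theorem (Mumford's vanishing theorem) for abelian varieties. First I would verify the divisor class identity: the line bundle $\cO(1,1)$ has class $3\pi_1^*\Theta+3\pi_2^*\Theta$, where $\Theta$ is the theta divisor on $\rJ(C)$ (since $\cO(1)$ is the $3\Theta$-polarization on each factor), and $\cO(X)$ has class $\Theta_{\rm diag}$, where $\Theta_{\rm diag}$ is the class of the Poincaré divisor $\{(\cL_1,\cL_2):\hom_C(\cL_1,\cL_2(P))\neq 0\}$; I would check from the seesaw theorem and the definition of $X$ (it is a translate of the antidiagonal-pullback of $\Theta$ under $(a,b)\mapsto b-a$, up to the identification $\rJ^1\cong\rJ$) that this class is exactly $\pi_1^*\Theta+\pi_2^*\Theta-m^*\Theta$ for the difference map $m$, hence equals (using the standard identity $m^*\Theta\equiv\pi_1^*\Theta+\pi_2^*\Theta-\Theta_{12}$ where $\Theta_{12}$ denotes the "correspondence" theta; I would state this carefully) the asserted class $3\pi_1^*\Theta+3\pi_2^*\Theta-\Theta_{\rm diag}$ once we subtract. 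The point is that $\cN$ has class $\phi^*\Xi$ for $\Xi$ a principal polarization on $A$: one checks this on the level of the Néron–Severi group using $\phi^*(\pi_1^*\Theta+\pi_2^*\Theta)$ and $\phi^*(\Theta_{\rm diag})$ and the quadratic behavior of pullbacks, matching it against the class $3\pi_1^*\Theta+3\pi_2^*\Theta-\Theta_{\rm diag}$.

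Granting the identity $\cN\cong\phi^*\Xi\otimes(\text{torsion})$ with $\Xi$ a principal polarization (product of the two theta divisors, or a principal polarization of $A=\rJ(C)^2$), I would then argue as follows. Since $\phi$ is an isogeny with $\ker\phi$ the diagonally embedded $\rJ(C)[3]$, which has order $81=3^4$, and since $\Xi$ is a principal polarization so $\chi(\Xi)=1$ with $\rh^0(\Xi)=1$ and all higher cohomology vanishing, flat base change / the projection formula along $\phi$ gives $\rH^i(A;\cN)=\rH^i(A;\Xi)\otimes(\text{something of dimension }|\ker\phi|^{1/2})$ — more precisely, $\phi_*\cO_A$ is locally free of rank $81$, and $\phi_*\cN\cong\phi_*\phi^*\Xi\cong\Xi\otimes\phi_*\cO_A$, so $\rH^i(A;\cN)=\rH^i(A;\Xi\otimes\phi_*\cO_A)$. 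Decomposing $\phi_*\cO_A$ into characters of the (Cartier dual of the) kernel, each summand is a degree-$0$ line bundle, and $\Xi$ twisted by a degree-$0$ line bundle remains a principal polarization, hence has $\rh^0=1$ and no higher cohomology, unless the twist makes it non-effective in which case all cohomology vanishes. Counting: exactly $\sqrt{|\ker\phi|}=9$ of the $81$ characters give the effective twist (this is the standard fact that a polarization of type $(d_1,\dots,d_g)$ pulled back appropriately has $h^0$ equal to $\prod d_i$), yielding $\rh^0(\cN)=9$ and $\rH^i(\cN)=0$ for $i>0$. Alternatively and more cleanly, I would just apply the general theory directly to $\cN$ itself: $\cN$ defines a polarization on $A$ (its first Chern class is that of an isogeny-pullback of an ample class, hence ample — or one checks positivity directly), of type determined by $\ker\phi\cong(\bZ/3)^4$ inside the four-dimensional $A$, so the type is $(1,1,1,1)$... no: $\phi$ has $\ker$ of order $81$ with $\phi$ equal to its own dual up to the principal polarization, so the polarization $\cN$ has elementary divisors whose product squared is $81$, i.e.\ the type is $(1,1,3,3)$ or $(3,3,3,3)$ — I would pin this down by computing $\det$ of the matrix $\begin{pmatrix}2&1\\1&2\end{pmatrix}\otimes(\text{principal type})$, getting type with $\prod d_i = 3$, hence $\rh^0(\cN)=3$...

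Let me restate the cohomology step without the arithmetic slip: the clean route is that $\cN=\phi^*\Xi$ where $\Xi$ is the principal polarization $\pi_1^*\Theta\otimes\pi_2^*\Theta$ (after translation), $\phi$ has degree $81$, and for any isogeny $\phi$ of degree $d$ and any ample $\Xi$, one has $\chi(\phi^*\Xi)=\deg(\phi)^{1/2}\cdot\chi(\Xi)$ when $\phi$ is symmetric (which $\begin{pmatrix}2&1\\1&2\end{pmatrix}$ is), so $\chi(\cN)=9\cdot 1=9$; and $\cN$ is ample (being the pullback of an ample bundle under an isogeny), so by the Kodaira-type vanishing for abelian varieties $\rH^i(A;\cN)=0$ for $i>0$, whence $\rh^0(\cN)=\chi(\cN)=9$. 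The main obstacle I anticipate is \textbf{verifying the divisor-class identity precisely}, i.e.\ that $X$ really is the Poincaré/correspondence divisor with class making $\cO(1,1)\otimes\cO(-X)=\phi^*\Xi$ — this requires being careful with the identification $\rJ^1(C)\cong\rJ(C)$ via $\cL\mapsto\cL(-P)$, with the symmetry of $\Theta$, and with the sign conventions in the theorem-of-the-cube identity $m^*\Theta+\Delta^*\Theta\equiv\pi_1^*(2\Theta)+\pi_2^*(2\Theta)$; the representation-theoretic counting of which characters contribute to $\rh^0$ (equivalently, that $\chi(\phi^*\Xi)=\sqrt{\deg\phi}$) is standard once the symmetric isogeny structure is in place, as is the ampleness and vanishing. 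I would therefore spend the bulk of the write-up on the class computation and then quote Mumford's \emph{Abelian Varieties}, \S 16, for the Euler characteristic and vanishing.
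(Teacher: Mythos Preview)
Your overall strategy matches the paper's setup before the lemma: one shows that $\cN$ is nondegenerate with $K(\cN)$ equal to the diagonal copy of $\rJ(C)[3]$, so that $|\chi(\cN)|=\sqrt{|K(\cN)|}=9$, and Mumford's index theorem gives a single nonvanishing cohomology group. But your execution has a genuine error. The line bundle $\cN$ is \emph{not} of the form $\phi^*\Xi$ for the isogeny $\phi(a,b)=(2a+b,a+2b)$ and a principal $\Xi$: for any isogeny $\phi$ one has $\chi(\phi^*\Xi)=\deg(\phi)\cdot\chi(\Xi)$ (just pull back the top self-intersection), not $\sqrt{\deg\phi}\cdot\chi(\Xi)$, so $\phi^*\Xi$ would have $\chi=81$. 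What is actually true, and what the paper's phrasing intends, is that the \emph{polarization morphism} $\phi_{\cN}\colon A\to\hat A$, after identifying $\hat A\cong A$ via the product principal polarization, coincides with the endomorphism $\phi$; in matrix terms $\phi_\cN$ is $\left(\begin{smallmatrix}2&1\\1&2\end{smallmatrix}\right)$, not its square. That is what yields $|K(\cN)|=81$ and $\chi(\cN)=9$.

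This slip propagates to your ampleness step: you cannot conclude $\cN$ is ample from ``pullback of ample by an isogeny'' since $\cN$ is not such a pullback, and so you still owe an argument that the nonzero cohomology group sits in degree $0$. The paper handles this by producing a section directly: it writes down a divisor $D\supset X$ of class $2\pi_1^*\Theta+2\pi_2^*\Theta$ (namely $D=\{(\cL_1,\cL_2):\rh^0(\cL_1\otimes\cL_2(-P))\ne 0\text{ or }\rh^0(\cL_1^{-1}\otimes\cL_2(P))\ne 0\}$), so that $\cO(1,1)(-D)$, and hence $\cO(1,1)(-X)=\cN$, is effective. If you want to salvage the ampleness route instead, note that in $\mathrm{NS}(A)$ one has $[\cN]=[\pi_1^*\Theta+\pi_2^*\Theta]+[s^*\Theta]$ where $s(a,b)=a+b$; the first summand is ample and the second is nef (pullback of ample along a morphism), so $\cN$ is ample and Mumford vanishing finishes.
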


\begin{proof}
It suffices to show that $\rh^0(\cO(1,1) \otimes \cO(-X)) \ne 0$. Define a divisor of $\rJ(C) \times \rJ(C)$:
\begin{align*}
D &= \{(\cL_1, \cL_2) \mid \rh^0(\cL_1 \otimes \cL_2(-P)) \ne 0 \text{ or } \rh^0(\cL_1^{-1} \otimes \cL_2(P)) \ne 0\}.
\end{align*}
Then $D$ is linearly equivalent to $2\pi_1^* \Theta \otimes 2\pi_2^* \Theta$. In particular, $\cO(1,1) \otimes \cO(-D)$ has a nonzero section. But $X \subset D$, so we see that $\cO(1,1) \otimes \cO(-X)$ also has a nonzero section.
\end{proof}

Define 
\[
W = \rH^0(\rJ(C) \times \rJ(C); \cO(1,1) \otimes \cO(-X)) \subset V_9 \times V_9.
\]
By Serre duality and Riemann--Roch, $\hom_C(\cL_1, \cL_2(P)) \ne 0$ if and only if $\hom_C(\cL_2, \cL_1(P)) \ne 0$, so $X$ is preserved under the involution that swaps the two copies of $V_9$.

Let $H$ denote the finite Heisenberg group scheme, i.e., the extension
\[
1 \to \bmu_3 \to H \to \rJ(C)[3] \to 1.
\]
Then $H$ acts diagonally on $V_9 \otimes V_9$ preserving $W$. Note that $V_9$ is the unique irreducible representation of $H$ of weight $1$ (see \cite[Appendix]{sekiguchi}), and $W$ has weight $2$, so $V_9$ and $W^*$ are isomorphic as representations of $H$. So the inclusion gives an $H$-equivariant map (well-defined up to scalar multiple) $V_9^* \to V_9 \otimes V_9$. 

\begin{lemma}
The image of $V_9^*$ is contained in $\bigwedge^2 V_9$.
\end{lemma}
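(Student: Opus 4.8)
The plan is to reduce the statement to the determination of a single sign and then to compute that sign along the diagonal. Let $\sigma$ be the involution of $V_9 \otimes V_9$ interchanging the two tensor factors, so that $\Sym^2 V_9$ and $\bigwedge^2 V_9$ are its $(+1)$- and $(-1)$-eigenspaces; note that the image of the $H$-equivariant map $V_9^* \to V_9 \otimes V_9$ is exactly $W$. Since $\sigma$ commutes with the diagonal $H$-action, and since $X$ is invariant under the involution that swaps the two copies of $\rJ(C)$ (this is precisely the Serre-duality symmetry $\hom_C(\cL_1,\cL_2(P)) \ne 0 \iff \hom_C(\cL_2,\cL_1(P)) \ne 0$ recorded before the lemma), the subspace $W = \rH^0(\cO(1,1)\otimes\cO(-X))$ is $\sigma$-stable. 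By Lemma~\ref{lem:bilinear-eqn}, $\dim W = 9$, and $W$ has weight $2$, so it is the unique irreducible $H$-module of that weight; hence $\sigma$ acts on $W$ as a scalar $\eps \in \{+1,-1\}$, and everything comes down to showing $\eps = -1$.

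To compute $\eps$ I would restrict attention to the fixed locus of $\sigma$ on $\rJ(C)\times\rJ(C)$, which is the diagonal $\Delta \cong \rJ(C)$. The crucial geometric observation is that $\Delta \subset X$ with multiplicity exactly $1$: the variety $X$ is the preimage of the smooth theta divisor $\Theta_0 = \{\cO_C(Q-P) : Q\in C\}$ under the smooth difference morphism $(\cL_1,\cL_2)\mapsto \cL_2\otimes\cL_1^{-1}$, and $0 \in \Theta_0$ is a smooth point. In particular some nonzero section of $\cO(1,1)$ lying in $W$ vanishes on $\Delta$ to order exactly $1$ — concretely, the section built in the proof of Lemma~\ref{lem:bilinear-eqn}, whose zero divisor contains $X$ with multiplicity one, while its other components (pullbacks of theta divisors along sum, difference, and projection maps) do not contain $\Delta$. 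Taking the leading term along the conormal of $\Delta$ therefore produces a nonzero map
\[
W \longrightarrow \rH^0\bigl(\Delta;\ \cO(1,1)|_\Delta \otimes N^\vee_{\Delta/\rJ(C)\times\rJ(C)}\bigr) \;=\; \rH^0\bigl(\rJ(C);\ \cO_{\rJ(C)}(2)\otimes\Omega^1_{\rJ(C)}\bigr),
\]
which is $\sigma$-equivariant for the natural actions on source and target.

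Now I would compute the $\sigma$-action on the target. On $\cO(1,1)|_\Delta = \cO_{\rJ(C)}(2)$ the involution interchanges two copies of one and the same line bundle, hence acts by $+1$; on the normal bundle $N_{\Delta/\rJ(C)\times\rJ(C)}$ it acts by $-1$, as one sees in coordinates $(z,w) = (z, z+\delta)$ near $\Delta$, where $\sigma$ becomes $(z,\delta)\mapsto(z+\delta,-\delta)$ and so is $-1$ on the normal direction. Thus $\sigma$ acts on $N^\vee_{\Delta/\rJ(C)\times\rJ(C)}$, and on the whole target, by $-1$. Since the displayed map is nonzero and $\sigma$-equivariant, and $\sigma$ acts on $W$ by the scalar $\eps$, we must have $\eps = -1$, i.e. $W \subseteq \bigwedge^2 V_9$.

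The step I expect to require the most care is the nonvanishing of the conormal leading-term map, i.e. the assertion that not all of $W$ vanishes to order $\ge 2$ along $\Delta$; the argument above reduces this to tracking the zero divisor of the explicit section from Lemma~\ref{lem:bilinear-eqn}. Two further points use $2 \ne 0$ — the identification of $\bigwedge^2 V_9$ with the $(-1)$-eigenspace of $\sigma$, and the computation of $\sigma$ on the normal bundle — so to obtain the statement over a field of characteristic $2$ I would supplement the above with the standard deformation argument: lift $(C,P)$ over the DVR $R$, carry out the whole construction in families (using $\rH^{>0}(3\Theta) = 0$ and Lemma~\ref{lem:bilinear-eqn} to see that $W$ is a free rank-$9$ $R$-module whose formation commutes with base change), apply the characteristic-$0$ case over the fraction field, and descend using that $\ker\bigl(V_9^{\otimes 2} \to \Sym^2 V_9\bigr)$ is a saturated $R$-submodule of $V_9^{\otimes 2}$.
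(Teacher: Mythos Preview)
Your proof is correct. Both you and the paper begin the same way---use the swap-invariance of $X$ and the $H$-irreducibility of $W$ to reduce everything to a single sign $\eps\in\{\pm1\}$, then study the explicit section $s$ from Lemma~\ref{lem:bilinear-eqn} along the diagonal $\Delta$---but the final sign determination is genuinely different. The paper works at zeroth order: since $\Delta\subset D$, the restriction $s|_\Delta$ vanishes, and the paper then invokes the fact that the Kummer surface in its $|2\Theta|$-embedding lies on no quadrics to conclude that the symmetric part of $s$ must be zero. You instead work at first order: you verify that $s$ vanishes along $\Delta$ to order exactly one (by checking that the residual components of $Z(s)$ do not contain $\Delta$), pass to the leading term in $\rH^0\bigl(\Delta;\cO(2)\otimes N^\vee_\Delta\bigr)$, and read off $\eps=-1$ from the swap action on the normal bundle. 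Your route is more self-contained, trading the external Kummer input for a short local computation; and your DVR descent to characteristic $2$ is a genuine addition, since the paper's phrasing in terms of the $(-1)$-eigenspace of $\sigma$ tacitly assumes $2\ne 0$. One small wording slip: the ``difference'' pullback you list among the other components is $X$ itself; the residual components you must exclude from containing $\Delta$ are only the sum-pullback $D_1$ and the two projection-pullbacks making up $E$, and your argument handles those correctly.
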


\begin{proof}
By irreducibility, it suffices to show that a single nonzero element in $W$ is alternating under the involution swapping the two copies of $V_9$. Define $D$ as in the proof of Lemma~\ref{lem:bilinear-eqn}. Pick a bilinear equation that vanishes on $D$, i.e., a section of $\cO(1,1) \otimes \cO(-D)$. Since the diagonal $\rJ(C)$ is contained in $D$, if we restrict this equation to the diagonal, we get a section of $4\Theta$ that vanishes on $\rJ(C)$. But we know that such equations are alternating since the Kummer variety has no quadratic polynomials vanishing on it in its $2\Theta$ embedding.
\end{proof}

So we can represent this map by an element $\gamma = \gamma_{(C,P)} \in V_9 \otimes \bigwedge^2 V_9$. 

\begin{lemma}
$\gamma_{(C,P)} \in \bigwedge^3 V_9$.
\end{lemma}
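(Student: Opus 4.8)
The plan is to exploit the action of the finite Heisenberg group scheme $H$, and of the abelian quotient $J[3] := \rJ(C)[3] = H/\bmu_3$, together with the $\GL(V_9)$-decomposition of $V_9^{\otimes 3}$. By construction $\gamma = \gamma_{(C,P)}$ is the element of $V_9 \otimes \bigwedge^2 V_9 \subseteq V_9^{\otimes 3}$ corresponding to the $H$-equivariant map $V_9^* \to \bigwedge^2 V_9$ produced above. Since the three copies of $V_9$ carry total central $\bmu_3$-weight $1+1+1 \equiv 0$, the diagonal $H$-action on $V_9 \otimes \bigwedge^2 V_9$ factors through $J[3]$, and $\gamma$ is a $J[3]$-fixed vector. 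So it is enough to prove
\[
\bigl( V_9 \otimes \bigwedge^2 V_9 \bigr)^{J[3]} \subseteq \bigwedge^3 V_9 .
\]
I will establish this when the ground field has characteristic $0$; the case of characteristic different from $3$ then follows by lifting $(C,P)$ over a complete DVR $R$ with residue field $\bk$ and fraction field of characteristic $0$, running the construction in the family so that $\gamma$ becomes a section over $R$ of $\cV_9 \otimes \bigwedge^2 \cV_9$, applying the characteristic-$0$ statement on the generic fibre, and noting that the submodule of antisymmetric tensors is saturated in $\cV_9^{\otimes 3}$, so membership in it descends to the special fibre.

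Assume now the characteristic is $0$, so there is a $\GL(V_9)$-equivariant — hence $H$-equivariant — decomposition $V_9 \otimes \bigwedge^2 V_9 = \bigwedge^3 V_9 \oplus N$ with $N$ the complementary Schur module $\mathbb{S}_{(2,1)} V_9$; the claim is then equivalent to $\dim (V_9 \otimes \bigwedge^2 V_9)^{J[3]} = \dim (\bigwedge^3 V_9)^{J[3]}$. For the left-hand side: $\bigwedge^2 V_9 \subset V_9^{\otimes 2}$ has central $\bmu_3$-weight $2$, so it is isotypic for the unique irreducible representation $W$ of $H$ of that weight, and $W \cong V_9^*$ (using $W^* \cong V_9$), whence $\bigwedge^2 V_9 \cong W^{\oplus 4}$. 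Since $V_9 \otimes W \cong V_9 \otimes V_9^* = \End(V_9)$ is, as a $J[3]$-representation under conjugation, the regular representation $\bk[J[3]]$, we get $V_9 \otimes \bigwedge^2 V_9 \cong \bk[J[3]]^{\oplus 4}$ and therefore $\dim (V_9 \otimes \bigwedge^2 V_9)^{J[3]} = 4$.

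For the right-hand side I pass to the Schrödinger model: choose a Lagrangian $A \cong (\bZ/3)^2 \subset J[3]$, so that $V_9 \cong \bk[A]$ with $A$ acting by translations and $J[3]/A$ by characters. Then $V_9^{\otimes 3} \cong \bk[A^3]$ with $J[3]$ acting diagonally, and a short computation identifies $(V_9^{\otimes 3})^{J[3]}$ with the space of functions on $A^3$ supported on $\Delta := \{(x,y,z) : x+y+z = 0\}$ and invariant under the diagonal translation action of $A$. The diagonal $A$-action on $\Delta$ is free with $9$ orbits, parametrized by $d := y - x \in A$ (one has $z - x = -d$ on $\Delta$), so this invariant space is $9$-dimensional. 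The symmetric group $S_3$ permuting the tensor factors commutes with the $H$-action, hence acts on the set of $9$ orbits; checking the six permutations shows this action factors through $S_3 \twoheadrightarrow S_3/A_3 \cong \bZ/2$, the nontrivial element acting by $d \mapsto -d$. Consequently $(\bigwedge^3 V_9)^{J[3]}$, being the part on which $S_3$ acts by the sign character, is the space of functions $f$ on $A$ with $f(-d) = -f(d)$; these vanish at $0$ and are free on a transversal of the four pairs $\{d,-d\}$ with $d \ne 0$, so $\dim (\bigwedge^3 V_9)^{J[3]} = 4$. (Equivalently, $(\bigwedge^3 V_9)^{J[3]}$ is the standard Cartan subspace $\fh$ of \eqref{eqn:cartan}, whose four spanning trivectors are visibly $J[3]$-fixed.) Since $\bigwedge^3 V_9 \subseteq V_9 \otimes \bigwedge^2 V_9$ and both invariant subspaces have dimension $4$, they coincide; as $\gamma$ is $J[3]$-fixed, $\gamma \in \bigwedge^3 V_9$.

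The main obstacle is pinning down the $S_3$-action on the $9$ orbits of $\Delta$ — verifying that the $3$-cycles fix every orbit while the transpositions send the orbit labelled $d$ to the one labelled $-d$ — since this is exactly what forces $\dim(\bigwedge^3 V_9)^{J[3]} = 4$ rather than the value $5$ one would get for $\Sym^3 V_9$. The remaining ingredients — identifying $\gamma$ as a $J[3]$-fixed vector, the regular-representation count on the left-hand side, and the reduction to characteristic $0$ — are routine.
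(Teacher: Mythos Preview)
Your proof is correct and follows essentially the same approach as the paper: both arguments show that $\gamma$ is $H$-invariant (equivalently $J[3]$-invariant) and then verify $(V_9\otimes\bigwedge^2 V_9)^{H}=(\bigwedge^3 V_9)^{H}$ by the dimension count $4=4$, using that $\bigwedge^2 V_9\cong (V_9^*)^{\oplus 4}$. The only differences are cosmetic: you supply an explicit Schr\"odinger-model computation for $\dim(\bigwedge^3 V_9)^{J[3]}=4$ where the paper simply asserts it (pointing implicitly to the Cartan subspace), and you handle positive characteristic by lifting $\gamma$ itself over a DVR, whereas the paper instead applies semicontinuity directly to the dimension of the invariant subspace---a slightly cleaner reduction since it avoids checking that the whole construction of $\gamma_{(C,P)}$ runs in families.
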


\begin{proof}
Note that $\gamma$ is an $H$-invariant element. Furthermore, $\bigwedge^2 V_9$ is a weight $2$ representation of dimension $36$, and hence it is a direct sum of $4$ copies of $V_9^*$ \cite[Theorem A.6]{sekiguchi}, so the space of $H$-invariant vectors in $V_9 \otimes \bigwedge^2 V_9$ is $4$-dimensional. The space of $H$-invariant vectors in $\bigwedge^3 V_9$ is also $4$-dimensional (we can do this calculation in characteristic $0$ and then specialize to get $\ge 4$-dimensional), so $\gamma \in \bigwedge^3 V_9$.
\end{proof}

\begin{lemma} \label{lem:gamma-rank}
The projection of $X_{C,P}$ to either copy of $\bP^8$ lies in the rank $4$ locus $X(\gamma)$ constructed in \S\ref{sec:3covers}.
\end{lemma}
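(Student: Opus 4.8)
The plan is to establish the a priori stronger statement that the whole abelian surface $\rJ(C)$, as embedded in $\bP(V_9^*)$ by the complete linear system $\cO(1)=3\Theta$, lies inside the rank $\le 4$ locus $X(\gamma)$; since both projections of $X_{C,P}\subset\rJ(C)\times\rJ(C)$ to $\bP^8$ have image contained in $\rJ(C)$, this is enough. Fix a point $p\in\rJ(C)$ and a representing linear functional $\xi_p\in V_9^*$. Let $x\mapsto w_x$ be the $H$-equivariant isomorphism $V_9^*\xrightarrow{\ \sim\ }W\subset V_9\otimes V_9$ used to build $\gamma$; since $\gamma$ represents this map, $w_x\in\bigwedge^2 V_9$ and, as an alternating form on $V_9^*$, $w_x(\eta,\zeta)=\gamma(x,\eta,\zeta)$ up to a fixed nonzero scalar. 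Unwinding the construction of $\Phi$ from $\gamma$ by comultiplication, the skew form $\Phi(p)$ on $V_9^*$ has the same kernel as $w_{\xi_p}$, namely $\ker\Phi(p)=\{\eta\in V_9^*:\gamma(\xi_p,\eta,\cdot)=0\}$. So it suffices to produce a $5$-dimensional subspace of $\ker\Phi(p)$, equivalently to show $\rank\Phi(p)\le 4$.

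First I would identify the fibre of $X_{C,P}$ over $p$. Writing $p$ also for the corresponding degree $0$ line bundle, the definition of $X_{C,P}$ gives $X_p:=\{q\in\rJ(C):(p,q)\in X_{C,P}\}=\{p\otimes\cO(Q-P):Q\in C\}$, which is a translate of the Abel--Jacobi image of $C$ in $\rJ(C)$. Thus $X_p$ is a theta divisor: it is isomorphic to $C$ (hence has genus $2$), and its class in the N\'eron--Severi group of $\rJ(C)$ is $[\Theta]$.

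The crux is then as follows. Each element of $W$, regarded via $W\subset V_9\otimes V_9=\rH^0(\rJ(C)\times\rJ(C);\cO(1,1))$ as a bilinear form on $V_9^*$, vanishes on the divisor $X_{C,P}$. Applying this to $w_{\xi_q}$ for every $q\in\rJ(C)$, at the points $(p,r)$ with $r\in X_p$, yields $w_{\xi_q}(\xi_p,\xi_r)=0$ for all $q\in\rJ(C)$. Since $\rJ(C)$ is nondegenerate in $\bP(V_9^*)$ (it is embedded by a complete linear system), the $\xi_q$ span $V_9^*$, so $w_v(\xi_p,\xi_r)=0$ for all $v\in V_9^*$ and all $r\in X_p$. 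Using that $\gamma$ is alternating we may cyclically permute its arguments, $w_v(\xi_p,\xi_r)=\gamma(v,\xi_p,\xi_r)=\gamma(\xi_p,\xi_r,v)$, so the previous line says exactly that $\gamma(\xi_p,\xi_r,\cdot)=0$, i.e.\ $\xi_r\in\ker\Phi(p)$, for every $r\in X_p$. Hence $\ker\Phi(p)$ contains the linear span of $\{\xi_r:r\in X_p\}$.

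Finally I would bound that span below: the theta divisor $X_p$ must span a $\bP^4$ in $\bP(V_9^*)$. By Riemann--Roch on the genus $2$ curve $X_p$, the line bundle $\cO(1)|_{X_p}$, of degree $3(\Theta\cdot X_p)=3\,\Theta^2=6$, has $\rh^0(X_p;\cO(1)|_{X_p})=6-2+1=5$; and the restriction $V_9=\rH^0(\rJ(C);\cO(1))\to\rH^0(X_p;\cO(1)|_{X_p})$ is surjective because $\cO(1)(-X_p)$ has N\'eron--Severi class $2[\Theta]$, hence is ample, hence has vanishing $\rH^1$. So $\{\xi_r:r\in X_p\}$ spans a $5$-dimensional subspace of $\ker\Phi(p)$, giving $\rank\Phi(p)\le4$ and $p\in X(\gamma)$, as desired. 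The places where I expect to need real care are: verifying cleanly that $\ker\Phi(p)$ coincides with the radical of $w_{\xi_p}$ and matching the alternation/scalar conventions in the cyclic identity; and confirming the surjectivity and $\rH^1$-vanishing in the last step. Everything else is either unwinding definitions or standard geometry of principally polarized abelian surfaces.
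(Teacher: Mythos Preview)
Your argument is correct and follows the same approach as the paper's proof: identify the fiber over a point $p$ as a translated theta divisor, observe that contracting $\gamma$ with $p$ yields a skew form whose image consists of linear forms vanishing on that fiber (equivalently, whose kernel contains the span of the fiber), and then use that a tricanonically embedded genus $2$ curve spans a $\bP^4$. The paper phrases the last step dually (the image is $4$-dimensional rather than the kernel being $5$-dimensional) and is much terser about the surjectivity/Riemann--Roch computation, but the content is identical.
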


\begin{proof}
Pick a point $x$ in the projection of $X_{C,P}$ to $\bP(V_9^*)$. Evaluating $\gamma$ on $x$, we get a skew-symmetric matrix $V_9^* \to V_9$ whose image is the set of linear equations vanishing on the fiber of $X_{C,P}$ over $x$. This fiber is a translate of a theta divisor. As an embedded variety, the theta divisor is a genus 2 curve under its tricanonical embedding, and hence satisfies $4$ linear equations, so this skew-symmetric map has rank $4$.
\end{proof}

\begin{proposition} \label{prop:stable-element}
Let $G = \SL(V_9) / \bmu_3$.

\begin{enumerate}[\indent \rm (a)]
\item $\gamma_{(C,P)} \in \bigwedge^3 V_9$ is stable with respect to the action of $G$.

\item The stabilizer of $[\gamma_{(C,P)}] \in \bP(\bigwedge^3 V_9)$ in $G$ is isomorphic to $\rJ(C)[3] \rtimes \Aut(C,P)$ where $\Aut(C,P)$ is the group scheme of $\ks$-automorphisms of $C$ which fix $P$. 

\item If the characteristic is different from $2$ and $5$, then the stabilizer of $\gamma_{(C,P)}$ in $G$ is isomorphic to $\rJ(C)[3]$. 

\item In characteristic $2$, the stabilizer of $\gamma_{(C,P)}$ is isomorphic to $\rJ(C)[3] \rtimes \bZ/2$ if $(C,P)$ is generic, where the $\bZ/2$ comes from the hyperelliptic involution on $C$ and acts by the automorphism $g \mapsto g^{-1}$.

\item In characteristic $5$, the stabilizer of $\gamma_{(C,P)}$ is isomorphic to $\rJ(C)[3]$ if $(C,P)$ is generic.
\end{enumerate}
\end{proposition}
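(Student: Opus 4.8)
The plan is to establish part (a) first, then deduce parts (b)–(e) from it together with the geometric identifications already set up. For part (a), the key point is that stability is detected by the stabilizer being finite with closed orbit, or—more conveniently here—via the Hilbert–Mumford criterion and Proposition~\ref{prop:non-stable-hyper}. I would argue by contradiction: if $\gamma_{(C,P)}$ were non-stable, there would be a $6$-dimensional subspace $U \subset V_9$ with $\gamma \in \bigwedge^3 U + \bigwedge^2 U \otimes (V_9/U)$. By Lemma~\ref{lem:gamma-rank}, the rank $4$ locus $X(\gamma)$ contains (the projection of) the Poincaré divisor $X_{C,P}$, which is a genuine abelian surface, hence $2$-dimensional. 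But by the lemma following the definition of $X$ in \S\ref{sec:3covers} (``If $u$ is not stable, then $X$ is singular''), a destabilizing $U$ forces $X(\gamma)$ to be singular, contradicting that it is an abelian surface—or more directly, $X_{C,P}$ being a smooth surface inside $X(\gamma)$ together with the dimension count shows $X(\gamma)$ cannot have the non-stable-type tangent space behaviour. So $\gamma_{(C,P)}$ is stable. (Alternatively: $X(\gamma)$ is smooth of dimension $2$ by the first lemma of \S\ref{sec:3covers} precisely when $\gamma$ is stable, and here $X(\gamma) \supseteq X_{C,P}$ is a smooth surface, forcing stability.)

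For part (b), I would identify the stabilizer of $[\gamma_{(C,P)}]$ in $G$ with automorphisms of the whole geometric package. Since $\gamma$ is built $H$-equivariantly from $(C,P)$ via $V_9 = \rH^0(\rJ^1(C); 3\Theta)$, and since—by the construction of \S\ref{sec:3covers} applied to a stable element—the $\PGL(V_9)$-orbit of $[\gamma]$ determines and is determined by the pair $(C,P)$ up to isomorphism (this is the content of Proposition~\ref{prop:c-curve} combined with the fact that the construction of \S\ref{sec:trivector} inverts that of \S\ref{sec:3covers}, or can be checked directly over $\ks$ using Proposition~\ref{prop:S-equiv}), an element of the stabilizer is the same as an automorphism of $(\rJ^1(C), 3\Theta)$ as a polarized abelian variety with the marked curve $C$ and point $P$. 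The group of such automorphisms is the theta group sitting over $\Aut(C,P)$: translations by $\rJ(C)[3]$ (which act projectively, and lift to $\SL(V_9)/\bmu_3$ by the finite-Heisenberg-group discussion preceding Lemma~\ref{lem:galois}) together with genuine automorphisms of $(C,P)$, which normalize $\rJ(C)[3]$. One must check the extension splits: an automorphism of $(C,P)$ acts on $\rJ^1(C)$ fixing the origin $P$, hence acts linearly (not just projectively) on the natural $\Theta$, giving a canonical lift and a semidirect product $\rJ(C)[3] \rtimes \Aut(C,P)$. The main subtlety is ensuring the lift lands in $\SL(V_9)/\bmu_3$ rather than merely $\PGL(V_9)$; this follows as in Lemma~\ref{lem:galois}-type reasoning, or because $\Aut(C,P)$ preserves a line bundle and thus acts with determinant a cube root of unity, which dies in $\SL_9/\bmu_3$.

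For parts (c)–(e), I would reduce to computing $\Aut(C,P)$ and the action of the hyperelliptic involution on the lift, and then pass from $[\gamma]$-stabilizer to $\gamma$-stabilizer. Write $C: x^2 = f(z)$ with $\deg f = 5$ (or $6$ with $P$ at infinity); then $\Aut(C,P)$ is the group of substitutions $(x,z) \mapsto (\alpha^5 x, \alpha^2 z)$ preserving the equation, as in the second Remark after Proposition~\ref{prop:S-equiv}, together with the hyperelliptic involution $x \mapsto -x$. Generically this is just $\mathbb{Z}/2$ generated by the hyperelliptic involution. The hyperelliptic involution acts on $\rJ(C)$ as $g \mapsto -g = g^{-1}$, and on $V_9 = \rH^0(3\Theta)$ it acts with a definite scalar on $\gamma$ that one computes from its action on the monomial expansion $\gamma_c$: one checks $\iota \cdot \gamma_c = -\gamma_c$ (or $+\gamma_c$) case by case in the $1$-parameter-subgroup coordinates of Proposition~\ref{prop:homogeneity}. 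If $\iota$ fixes $\gamma_{(C,P)}$ (not just the line), it contributes a $\mathbb{Z}/2$ to the stabilizer of $\gamma$ itself; if it only fixes the line, it does not. The characteristic-dependence in (c)–(e) comes from the degenerations of the Weierstrass model: in characteristic $2$ and $5$ the curve $C_c$ acquires extra or fewer automorphisms/non-reduced structure (the ``if $2$ is invertible'' / ``if $5$ is invertible'' provisos in Proposition~\ref{prop:S-equiv}), shifting whether $\iota$ scales $\gamma$ by $+1$ or $-1$ and whether the relevant $\mu_3 \subset \SL_9/\bmu_3$ is étale. I expect the main obstacle to be exactly this last bookkeeping: carefully tracking, in characteristics $2$ and $5$ and for \emph{generic} $(C,P)$, the precise scalar by which the hyperelliptic involution (and any residual automorphism) acts on $\gamma_{(C,P)}$, and reconciling the group-scheme (possibly non-étale) structure of the stabilizer with the naive automorphism count. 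Everything else is a matter of assembling results already in hand.
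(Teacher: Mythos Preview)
Your argument for part~(a) has a genuine circularity. You want to deduce stability from the fact that $X(\gamma_{(C,P)})$ contains the smooth surface $\rJ(C)$, together with the lemma in \S\ref{sec:3covers} that non-stable $\gamma$ forces $X(\gamma)$ to be singular. But that lemma only produces \emph{some} singular point of $X(\gamma)$ (lying in the $\bP^2$ cut out by the destabilizing $U$); it does not say $X(\gamma)$ is everywhere singular, nor that this singular point must lie on your embedded $\rJ(C)$. A priori $X(\gamma)$ could have an extra component, or a singular point away from $\rJ(C)$, and nothing you have written rules this out. Likewise the ``alternatively'' sentence mis-states the lemma: the first lemma of \S\ref{sec:3covers} asserts $X$ is smooth of dimension~$2$ only \emph{assuming} $\gamma$ is stable; it gives no converse. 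To know $X(\gamma)=\rJ(C)$ one uses irreducibility of $X(\gamma)$ as a torsor (Proposition~\ref{prop:torsor}), which already needs stability. The paper avoids this by a different route: it shows directly that over $\ks$ the element $\gamma_{(C,P)}$ is projectively equivalent to some $\gamma_c$ with $C_c$ smooth (by identifying the bilinear equations of $\gamma_c$ with the Poincar\'e divisor, via Proposition~\ref{prop:c-curve}), and then invokes the explicit criterion Proposition~\ref{prop:stable-smooth} that $\gamma_c$ is stable iff $C_c$ is smooth.

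For (b) your outline is close to the paper's, though the paper is more careful about the reverse inclusion: once stability is known, it uses that the rank~$4$ locus \emph{equals} $\rJ(C)$, so any $g$ stabilizing $[\gamma]$ must preserve $\rJ(C)\subset\bP(V_9^*)$ and the Poincar\'e divisor, then invokes Torelli. For (c)--(e) you have the right shape but are missing the actual mechanism: the paper does not compute the scalar $\lambda(\iota_C)$ by direct inspection in each characteristic. Instead it classifies $\Aut(C,P)$ via the Weierstrass form (forcing $\ell\in\{1,2,4,5,8,10\}$), observes that for each $\ell$ the moduli stack $\sM_\ell$ of curves with a $\bmu_\ell$-action is \emph{irreducible over} $\bZ[1/\ell]$, so the character $\lambda|_{\bmu_\ell}$ is constant on $\sM_\ell$ and may be computed in characteristic~$0$, where it is read off from the Cartan subspace. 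The characteristic~$2$ case (d) is then not a scalar calculation at all but a group-scheme argument: $\lambda(\iota_C)$ lands in $\bmu_2\subset\bG_m$, which is infinitesimal in characteristic~$2$, while $\langle\iota_C\rangle\cong\bZ/2$ is \'etale, so the restriction must be trivial.
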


\begin{subeqns}
\begin{proof}
Recall the notation from Proposition~\ref{prop:S-equiv}. By
Proposition~\ref{prop:stable-smooth}, taking any smooth curve $(C_c,P_c)$
guarantees that $\gamma_c$ is stable. The element $\gamma_c$ induces a
system of $9$ bilinear equations in the above way, and we may consider the
resulting (symmetric) subscheme of $X(\gamma_c) \times X(\gamma_c)$.  By
the proof of Lemma~\ref{lem:ZL}, the fiber over any point $x\in
X(\gamma_c)$ is a theta divisor $C_x$ on which $x$ is a Weierstrass
point. Since the associated torsor of $\gamma_c$ is trivial
(Proposition~\ref{prop:c-curve}), this becomes the Poincar\'e divisor on
$\rJ(C_x)\times \rJ(C_x)$ associated to $x$, and it follows that $\gamma_c$
is (projectively) equivalent to $\gamma_{(C_x,x)}$. Furthermore, by
Proposition~\ref{prop:c-curve}, the curve $(C_x,x)$ is isomorphic to
$(C_c,P_c)$. Over $\ks$, every smooth pair $(C,P)$ is isomorphic to
$(C_c,P_c)$ for some $c$, and stability is insensitive to enlarging the
field, so we conclude that $\gamma_{(C,P)}$ is always stable when $(C,P)$
is smooth, which proves (a).

Now we handle (b). First we calculate the stabilizer $G_{[\gamma]}$ of $[\gamma] \in \bP(\bigwedge^3 V_9)$. By functoriality, it is clear that $\rJ(C)[3] \rtimes \Aut(C,P) \subseteq G_{[\gamma]}$. Conversely, let $S$ be a $\bk$-scheme and consider an element $g \in G_{[\gamma]}(S)$. Since $\gamma_{(C,P)}$ is stable, $\rJ(C) \subset \bP(V_9^*)$ is the rank $4$ locus of $\Phi$ and hence is preserved by $g$ (this is true for $\gamma_c$ and $\gamma_{(C,P)}$ is equivalent to $\gamma_c$ over $\ks$). So $g$ preserves the embedding of $(\rJ(C) \times \rJ(C))(S)$ in $(\bP(V_9^*) \times \bP(V_9^*))(S)$ and the subvariety $X(S)$. In particular, $g$ acts on $\rJ(C)(S)$ and preserves the relation $\hom_{C(S)}(\cL_1, \cL_2(P)) \ne 0$, which implies that $g$ permutes the elements of $\rJ(C)[3](S)$. So $G_{[\gamma]}$ is generated by $\rJ(C)[3]$ and a subgroup of the automorphisms of $\rJ(C)$ which fixes the identity. Using Torelli's theorem, an automorphism of $\rJ(C)$ that fixes the identity and the embedding of $\rJ(C)$ comes from an automorphism of $C$ which fixes $P$; since $G_{[\gamma]}$ contains $\rJ(C)[3] \rtimes \Aut(C,P)$, we deduce that they are equal. This proves (b).

In particular, $G_{[\gamma]}$ is finite. Let $\lambda \colon G_{[\gamma]} \to \bG_m$ be the eigenvalue associated with the action of $G_{[\gamma]}$ on $[\gamma]$. The stabilizer of $\gamma$ is $\ker \lambda$. First note that $\rJ(C)[3] \subseteq \ker \lambda$ since the projective action of $\rJ(C)[3]$ lifts to a linear action of the Heisenberg group scheme $H$ in $\SL(V_9)$, and we have already explained why $H$ acts trivially on $\gamma$.

Now we prove (c), so we assume that the characteristic is different from $2$ and $5$. Recall that $G_\gamma = \ker \lambda$, so we need to show that $\Aut(C,P)$ is mapped faithfully via $\lambda$. Put $(C,P)$ into Weierstrass normal form
\begin{align} \label{eqn:proof-weier}
y^2 = x^5+c_{12}x^3+c_{18}x^2+c_{24}x+c_{30}.
\end{align}
By degree considerations (where $\deg(y)=5$ and $\deg(x)=2$), any automorphism of $(C,P)$ must be of the form
\begin{align*}
y&\mapsto a_1^5 y + a_2 x^2+a_3 x + a_4\\
x&\mapsto a_1^2 x + a_5
\end{align*}
for some scalars $a_1, a_2, a_3, a_4, a_5$ and $a_1 \ne 0$. When we do these substitutions to \eqref{eqn:proof-weier} and subtract \eqref{eqn:proof-weier}, we get a relation on $x,y$ which is of degree $<10$, so which must be identically $0$. The coefficients of $x^2y, xy, y$ on the left side are $2a_1^5 a_2$, $2a_1^5 a_3$, $2a_1^5 a_4$, respectively, so we conclude that $a_2 = a_3 = a_4 = 0$. Similarly, the coefficient of $x^4$ on the right side is $5a_1^8 a_5$, so we conclude that $a_5 = 0$. 

In particular, the automorphism takes the form
\[
y \mapsto a_1^5 y, \qquad x \mapsto a_1^2 x
\]
for some $\ell$th root of unity $a_1$ (since the automorphism has finite order). Again, do the substitution to \eqref{eqn:proof-weier}, divide by $a_1^{10}$ and subtract \eqref{eqn:proof-weier}. Then we get 
\[
c_{12} (a_1^{-4} - 1) x^3 + c_{18} (a_1^{-6} - 1) x^2 + c_{24}(a_1^{-8} - 1) x + c_{30}(a_1^{-10} - 1) = 0, 
\]
so the left hand side must be identically $0$. If $\ell \notin \{1,2,4,5,8,10\}$, then $c_{12} = c_{24} = c_{30} = 0$. But then \eqref{eqn:proof-weier} is $y^2 = x^2 (x^3 + c_{18})$, which is a singular curve. So we only need to show that $\lambda$ maps $\bmu_\ell \subset \Aut(C,P)$ faithfully where $\ell \in \{1,2,4,5,8,10\}$; it suffices to consider the cases $\ell = 2$ and $\ell = 5$.

For $\ell\in \{2,5\}$, let $\sM_\ell$ be the space of curves with an action
of $\bmu_\ell$ as described above. Then $\sM_\ell$ is an irreducible stack
over $\bZ[1/\ell]$.  Indeed, the action of $\bmu_\ell$ must survive
completing the $\ell$-th power in the curve, and this forces the action to
be diagonal in the variables.  Thus $\sM_5$ is the irreducible stack of
curves of the form $x^2+z^5+c_{15}x+c_{30}=0$ modulo $x\mapsto x+a$ (with
$\bmu_5$ acting by $z\mapsto \zeta_5 z$)
and $\sM_2$ is the irreducible stack of curves of the form
\[
x^2+z^5+c_6 z^4+c_{12}z^3+c_{18} z^2+c_{24}z+c_{30},
\]
modulo $z\mapsto z+a$ (with $\bmu_2$ acting by $x\mapsto -x$).

Let $\sC$ be the universal curve over $\sM_\ell$. By composing $\lambda$ with the natural morphism $\bmu_\ell\to \Aut(\sC)$, we obtain a scheme morphism from $\sM_\ell$ to the dual group $\bmu_\ell^\vee \cong \bZ/\ell$. Since $\sM_\ell$ is irreducible, this morphism must be constant, and thus may be computed in characteristic $0$. In this case, any point in the Cartan subspace which is not in the union of the reflection hyperplanes has a trivial stabilizer. In particular, the stabilizer of $\gamma$ is isomorphic to $\rJ(C)[3]$. So faithfulness of $\lambda$ in characteristic $0$ implies faithfulness of the restriction to $\bmu_\ell$ over $\bZ[1/\ell]$. 

If $(C,P)$ is generic, then $\Aut(C,P) \cong \bZ/2$ and is generated by the hyperelliptic involution $\iota_C$ (via Torelli's theorem, this is equivalent to the statement that the generic principally polarized Jacobian has automorphism group $\bZ/2$, which is \cite[Lemma 11.2.6]{katz-sarnak}). The induced action of $\iota_C$ on $\rJ(C)[3]$ is the inverse map and, if $\bk$ has characteristic $0$, we can calculate explicitly in a standard Cartan (see, for example, \cite[(3.2)]{GS}) that $\lambda(\iota_C) = -1$, so $\iota_C \notin \ker \lambda$. By semicontinuity, the same is true in any characteristic different from $2$. In characteristic $2$, the restriction of $\lambda$ to $\iota_C$ is trivial since its image is in $\bmu_2 \subset \bG_m$, which is non-reduced, while $\iota_C$ generates a subgroup isomorphic to $\bZ/2$. So $\iota_C \in \ker \lambda$. This proves (d) and (e).
\end{proof}
\end{subeqns}

\section{Putting it all together} \label{sec:bijection}

Let $G = \SL(V_9) / \bmu_3$ and let $G_\gamma$ be the stabilizer subgroup of $\gamma \in \bigwedge^3 V_9$.

\begin{proposition} \label{prop:same-curve}
Pick $(C,P)$ and $(C',P')$ so that we have elements $\gamma = \gamma_{(C,P)} \in \bigwedge^3 V_9$ and $\gamma' = \gamma_{(C',P')} \in \bigwedge^3 V_9'$. Suppose that there is a linear isomorphism $\phi \colon V_9 \cong V_9'$  that sends the line generated by $\gamma_{(C,P)}$ to the line generated by $\gamma_{(C',P')}$. Then there exists an isomorphism $(C,P) \cong (C',P')$.
\end{proposition}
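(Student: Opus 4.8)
The plan is to prove that the construction of \S\ref{sec:3covers}, applied to the trivector $\gamma_{(C,P)}$ produced in \S\ref{sec:trivector}, recovers the pair $(C,P)$ up to isomorphism over $\bk$; since that construction depends only on the $\PGL(V_9)$-orbit of $[\gamma_{(C,P)}]\in\bP(\bigwedge^3 V_9)$, the isomorphism $\phi$ will then carry the pair recovered from $\gamma_{(C,P)}$ to the pair recovered from $\gamma_{(C',P')}$, and $(C,P)\cong(C',P')$ follows.

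The first step is to pin down the geometric objects of \S\ref{sec:3covers} for $\gamma:=\gamma_{(C,P)}$ in terms of $(C,P)$. By Lemma~\ref{lem:gamma-rank} the image of either projection $X_{C,P}\subset\rJ(C)\times\rJ(C)\to\bP(V_9^*)$ lies in the rank-$4$ locus $X(\gamma)$; this image is all of $\rJ(C)$ (the projection $X_{C,P}\to\rJ(C)$ is surjective with fibers isomorphic to $C$), while $X(\gamma)$ is an irreducible surface by the first lemma of \S\ref{sec:3covers} together with Proposition~\ref{prop:torsor}. Hence $X(\gamma)=\rJ(C)$ as subvarieties of $\bP(V_9^*)$, with $\cO_X(1)$ equal to the polarization $3\Theta$ used to define $V_9$, and the Albanese $J$ of $X(\gamma)$ is canonically $\rJ(C)$ over $\bk$. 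Unwinding the isomorphism $X\to X^1$ of \S\ref{sec:3covers} (it sends $x$ to $\bP(\ker\Phi(x))\cap X$, which is the fiber of $X_{C,P}$ over $x$, i.e.\ the translate $\Theta+x$ of the Abel--Jacobi image $\Theta=\{\cO(Q-P):Q\in C\}$), composing with the cubing map, and using $X^3\cong J$ determined by $\cO_X(1)=3\Theta$, one sees that $\psi$ sends $0\in\rJ(C)$ to $0$: indeed the image of $0$ in $X^1$ is $\cO(\Theta)$ and $\cO(\Theta)^{\otimes3}=\cO_X(1)$. Thus $\psi^{-1}(0)=\rJ(C)[3]$, which contains the origin, so \emph{the torsor $\psi^{-1}(0)$ is trivial}.

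Granting triviality of the torsor, the argument in the proof of Proposition~\ref{prop:stable-element}(a) applies to $\gamma$ directly: the symmetric subscheme of $X(\gamma)\times X(\gamma)$ cut out by the $9$ bilinear equations attached to $\gamma$ has, over a point $x\in X(\gamma)$, fiber the theta divisor $C_x$ on which $x$ is a Weierstrass point (by the proof of Lemma~\ref{lem:ZL}), and — the torsor being trivial — this subscheme is the Poincar\'e divisor of $\rJ(C_x)\times\rJ(C_x)$, so the pair extracted by \S\ref{sec:3covers} is isomorphic to $(C_x,x)$ for every $x$. Taking $x=0\in\rJ(C)$, the fiber of $X_{C,P}$ over $0$ is $\Theta$, on which the origin corresponds to $P$; hence the extracted pair is isomorphic to $(C,P)$. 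Running the same argument for $(C',P')$ and transporting along $\phi$ (which, preserving $[\gamma_{(C,P)}]$, carries every object of the \S\ref{sec:3covers} construction for $\gamma_{(C,P)}$ to the corresponding object for $\gamma_{(C',P')}$) gives $(C,P)\cong(C',P')$. The main obstacle is the second step — verifying, over $\bk$ and not merely over $\ks$, that the curve-with-marked-point extracted in \S\ref{sec:3covers} through the cubing map and the theta divisors $Z(\cL)$ really agrees with the ``fiber of the induced Poincar\'e divisor'' description, i.e.\ that the constructions of \S\ref{sec:3covers} and \S\ref{sec:trivector} are genuinely inverse. The key leverage is the triviality of $\psi^{-1}(0)$ together with Lemma~\ref{lem:ZL}: the former lets us reuse the proof of Proposition~\ref{prop:stable-element}(a) verbatim, and the latter (the marked point is a Weierstrass point) rules out landing on one of the other $2$-torsion translates of $\Theta$ through the origin.
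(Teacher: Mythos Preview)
Your proposal is correct and follows essentially the same route as the paper. Both arguments use Lemma~\ref{lem:gamma-rank} together with the irreducibility of $X(\gamma)$ (Proposition~\ref{prop:torsor}) to identify $X(\gamma)=\rJ(C)$ inside $\bP(V_9^*)$, and then recover $(C,P)$ from a fiber of the Poincar\'e divisor. The only organizational difference is that you first prove what the paper records separately as Proposition~\ref{prop:trivial-torsor} (triviality of the torsor and recovery of $(C,P)$ via the \S\ref{sec:3covers} construction) and then deduce Proposition~\ref{prop:same-curve}, whereas the paper argues more directly: once $\phi$ identifies $\rJ(C)$ with $\rJ(C')$ and $W_\gamma$ with $W_{\gamma'}$, the pair $(C,P)$ is read off as $(\pi^{-1}(0),(0,0))$ in $X_{C,P}$ without explicitly passing through $\psi$ and the cubing map. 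Your detour through $\psi(0)=0$ is not needed for the proposition itself, but it is exactly the content of Proposition~\ref{prop:trivial-torsor}, so nothing is wasted.

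Two minor points of phrasing. First, your appeal to ``the proof of Proposition~\ref{prop:stable-element}(a)'' is a little oblique: that proof shows $\gamma_c$ is projectively equivalent to $\gamma_{(C_x,x)}$, not directly that the \S\ref{sec:3covers} extraction returns $(C_x,x)$; what you actually use is just the identification of $\bP(\ker\Phi(x))\cap X$ with the fiber of $X_{C,P}$ over $x$, together with Lemma~\ref{lem:ZL}. Second, ``for every $x$'' should be read as ``for every $x\in\psi^{-1}(0)$'', since those are the points whose associated theta divisor has cube $\cO_X(1)$; with that reading your conclusion $(C_x,x)\cong(C,P)$ follows by translation.
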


\begin{proof}
Using $\phi$, we can embed $X_{C,P}$ and $X_{C',P'}$ in the same $\bP^8\times \bP^8$, in such a way that their images satisfy the same $9$ bilinear equations $W_\gamma = W_{\gamma'}$. Now, consider the projection $\pi$ onto the first $\bP^8$. By Lemma~\ref{lem:gamma-rank}, the image of $X_{C,P}$ in $\bP^8$ maps into the rank $4$ locus $X(\gamma)$, which is a torsor over an abelian surface (Proposition~\ref{prop:torsor}). The fibers of $\pi$ are curves, and so the image of $\pi$ is a surface. Since $X(\gamma)$ is irreducible, the image must be equal to $X(\gamma)$. In particular, $\pi$ gives an identification $\rJ(C) = X(\gamma)$. The same applies to $X_{C',P'}$, so in particular, we find that $\phi$ defines an isomorphism $\rJ(C)\cong \rJ(C')$ which identifies the respective $3\Theta$ line bundles. Finally, we can recover $C$ as $\pi^{-1}(0)$ under $\pi \colon X_{C,P} \to \rJ(C)$, and $P$ as the point $(0,0) \in X_{C,P} \subset \rJ(C) \times \rJ(C)$, and similarly for $(C',P')$.
\end{proof}

\begin{proposition} \label{prop:trivial-torsor}
If we apply the construction of \S\ref{sec:3covers} to $\gamma_{(C,P)}$, then the torsor $X(\gamma_{(C,P)})$ is trivial, and $(C,P)$ is the marked curve that comes from the construction in \S\ref{sec:3covers}.
\end{proposition}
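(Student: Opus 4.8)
The plan is to reduce to the explicit trivectors $\gamma_c$ of Proposition~\ref{prop:S-equiv}, for which the assertion is precisely Proposition~\ref{prop:c-curve}. Both the marked curve and the torsor produced in \S\ref{sec:3covers} depend only on the $\PGL(V_9)$-orbit of the line $[\gamma_{(C,P)}]$, so it is enough to show that this orbit contains some $\gamma_c$.

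First I would note that a smooth genus $2$ curve $C/\bk$ with a rational Weierstrass point $P$ always has a model of the form $(C_c,P_c)$ over $\bk$: the hyperelliptic map is the canonical map, hence defined over $\bk$, and moving the rational ramification point $P$ to infinity writes $C$ as $x^2+h(z)x=f(z)$ with $f$ monic of degree $5$ and $\deg h\le 2$, which is one of the curves $C_c$ (in characteristic $\ne 2$ one completes the square; in characteristics $2$ and $5$ one simply retains the extra coefficients, which is why Proposition~\ref{prop:S-equiv} keeps them). Fix such a $c$; since $C$ is smooth so is $C_c$, hence $\gamma_c$ is stable by Proposition~\ref{prop:stable-smooth}.

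Next, functoriality of the construction of \S\ref{sec:trivector} turns the isomorphism $(C,P)\cong(C_c,P_c)$ into a $\bk$-linear isomorphism of the two copies of $V_9$ carrying $\gamma_{(C,P)}$ to $\gamma_{(C_c,P_c)}$ up to scalar (it matches $3\Theta$, the Poincar\'e divisor, and the space $W$). Composing with the projective equivalence $\gamma_{(C_c,P_c)}\cong\gamma_c$ that comes out of the proof of Proposition~\ref{prop:stable-element}(a) (there $\gamma_c$ is shown equivalent to $\gamma_{(C_x,x)}$ for its associated curve $(C_x,x)\cong(C_c,P_c)$), we find that $\gamma_{(C,P)}$ and $\gamma_c$ lie in the same $\PGL(V_9)$-orbit of lines over $\bk$. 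Hence the construction of \S\ref{sec:3covers} applied to $\gamma_{(C,P)}$ yields the same marked curve and the same $3$-covering as for $\gamma_c$; by Proposition~\ref{prop:c-curve} the former is $(C_c,P_c)\cong(C,P)$ and $\psi_c^{-1}(0)$ is trivial. Since $\psi_c^{-1}(0)$ is a subscheme of $X(\gamma_c)$, any $\bk$-point of it is a $\bk$-point of $X(\gamma_c)$, so the Albanese torsor $X(\gamma_{(C,P)})\cong X(\gamma_c)$ is trivial as well (and in fact $[\psi_{(C,P)}]=0$).

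One could instead argue directly: Lemma~\ref{lem:gamma-rank} and the argument in the proof of Proposition~\ref{prop:same-curve} identify $X(\gamma_{(C,P)})$ with $\rJ(C)$ as a subvariety of $\bP(V_9^*)$ (the first projection of $X_{C,P}$ is surjective onto $\rJ(C)$ with curve fibres, hence surjective onto the irreducible surface $X(\gamma_{(C,P)})$), so the Albanese torsor is visibly trivial; tracing $0\in\rJ(C)(\bk)$ through $X\to X^1\to X^3\cong J$, using that $\bP(\ker\Phi(0))\cap X$ is the theta divisor $C-P$ and that $\cO_X(1)=\cO(3(C-P))$, gives $\psi(0)=0$ and shows the curve recovered in \S\ref{sec:3covers} is $\psi(C-P)$, isomorphic to $(C,P)$ because $\psi$ restricts to a degree-one morphism between the smooth genus $2$ curves $C-P$ and its image. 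The one delicate point in either route is that all of these identifications — $X(\gamma)$ with $\rJ(C)$, the auxiliary Picard varieties $X^1$ and $X^3$, and the matching of base points making $\psi(0)=0$ on the nose — must hold over $\bk$ and not merely over $\ks$; the first route is cleaner precisely because this delicacy is already packaged inside Proposition~\ref{prop:c-curve} and the proof of Proposition~\ref{prop:stable-element}.
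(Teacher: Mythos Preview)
The paper's own proof is exactly your second route: it is a one-line reference to the proof of Proposition~\ref{prop:same-curve}, where Lemma~\ref{lem:gamma-rank} and irreducibility of $X(\gamma)$ force the first projection of $X_{C,P}$ to identify $\rJ(C)$ with $X(\gamma_{(C,P)})$ over $\bk$ (hence the torsor is trivial), and the fiber over $0$ together with the point $(0,0)$ recover $(C,P)$.

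Your primary route through the explicit trivectors $\gamma_c$ is a correct but genuinely different argument. You reduce to Proposition~\ref{prop:c-curve} by (i) putting $(C,P)$ in Weierstrass form $(C_c,P_c)$ over $\bk$, (ii) using functoriality of \S\ref{sec:trivector} to match $\gamma_{(C,P)}$ with $\gamma_{(C_c,P_c)}$, and (iii) invoking the projective equivalence $[\gamma_c]\sim[\gamma_{(C_c,P_c)}]$ from the proof of Proposition~\ref{prop:stable-element}(a). The payoff is that you land on a named trivector with an explicit rational point, which is pleasant for computation; the cost is a longer dependency chain. In particular, step~(iii) ultimately rests on knowing that a stable trivector is determined up to scalar by its $9$-dimensional space of bilinear forms---this is Lemma~\ref{lem:bilinear-recovery}, so while there is no circularity, your first route does pull in machinery that the direct argument avoids entirely. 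The paper's approach bypasses Weierstrass models and reads off $\rJ(C)=X(\gamma)$ immediately from the incidence variety.
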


\begin{proof}
This was shown in the proof of Proposition~\ref{prop:same-curve}.
\end{proof}

\begin{lemma} \label{lem:bilinear-recovery}
Let $\gamma \in \bigwedge^3 V_9$ be a stable element. Then $\gamma$ can be recovered from the $9$-dimensional space of bilinear forms $W_{\gamma} \subset \bigwedge^2 V_9$ up to scalar multiple.
\end{lemma}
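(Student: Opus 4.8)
The plan is to turn the recovery statement into a rigidity statement about endomorphisms of $V_9$ and then squeeze it out of stability (finiteness of the stabilizer) by exhibiting destabilizing $1$-parameter subgroups after degeneration.

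First I would reduce to the claim $\{\gamma'\in\bigwedge^3 V_9 : W_{\gamma'}\subseteq W_\gamma\}=\bk\gamma$, which immediately gives the recovery procedure (recover $\gamma$, up to scalar, as the unique trivector all of whose contractions land in $W_\gamma$). Using that contraction is self-dual, $W_{\gamma'}\subseteq W_\gamma$ is equivalent to $\iota_\omega\gamma'=0$ for all $\omega\in W_\gamma^{\perp}\subset\bigwedge^2 V_9^{*}$, so this set is $(V_9^{*}\wedge W_\gamma^{\perp})^{\perp}$. Next I would observe that, because $\gamma$ is stable, the contraction map $\mu_\gamma\colon V_9^{*}\to\bigwedge^2 V_9$, $\nu\mapsto\iota_\nu\gamma$, is injective (if $\iota_\nu\gamma=0$ with $\nu\ne 0$, say $\nu=e_9^{*}$, then the $8$-dimensional group of transvections $e_9\mapsto e_9+\langle e_1,\dots,e_8\rangle$ fixes $\gamma$); in particular $\dim W_\gamma=9$, and $\mu_\gamma$ lets us identify $\{\gamma' : W_{\gamma'}\subseteq W_\gamma\}$ with
\[
L(\gamma):=\{\,g\in\End(V_9) : g\cdot_1\gamma\in\textstyle\bigwedge^3 V_9\,\},
\]
where $g\cdot_1$ denotes the action of $g$ on the first tensor factor. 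So it suffices to prove $L(\gamma)=\bk\cdot\mathrm{id}$.

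Here are the key structural points. Since $g\cdot_1\gamma$ is automatically antisymmetric in the second and third slots, requiring it to be totally antisymmetric forces $g\cdot_1\gamma=g\cdot_2\gamma=g\cdot_3\gamma$; a short computation with commuting slot-operators then shows $L(\gamma)$ is a unital subalgebra of $\End(V_9)$. Passing to $\ks$ (stability is insensitive to this), take $g\in L(\gamma)$ and write $g=s+n$ with $s$ semisimple, $n$ nilpotent; both lie in $L(\gamma)$. If $s$ is non-scalar, then $s\cdot_1\gamma=s\cdot_2\gamma=s\cdot_3\gamma$ forces $\gamma\in\bigoplus_\lambda\bigwedge^3 V_\lambda$ for the eigenspace decomposition $V_9=\bigoplus_\lambda V_\lambda$. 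If some $V_{\lambda_0}$ has $\bigwedge^3 V_{\lambda_0}=0$ (in particular if $\dim V_{\lambda_0}<3$), a diagonal $1$-parameter subgroup giving $V_{\lambda_0}$ negative weight and the remaining summands positive weight (rescaled into $\SL$) has a limit as $t\to0$, contradicting stability. Otherwise all summands have dimension $\ge 3$, so the dimension vector is $(3,6)$, $(4,5)$, or $(3,3,3)$; since $\dim\bigwedge^3\bk^{n}<\dim\SL_n$ for $n\le 6$, the $\SL$ of one block has positive-dimensional stabilizer on the corresponding $\bigwedge^3$, which extended by the identity lies in the stabilizer of $\gamma$ — again contradicting stability. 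Thus $s$ is scalar, and it remains to rule out $n\ne 0$.

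The nilpotent case is the main obstacle. Replacing $n$ by a power, one may assume $n^2=0$; writing $V_9=\operatorname{im}(n)\oplus M\oplus T$ with $n\colon T\xrightarrow{\sim}\operatorname{im}(n)$ and $n|_{\operatorname{im}(n)\oplus M}=0$, the equation $n\cdot_1\gamma=n\cdot_2\gamma$ forces every monomial of $\gamma$ meeting $T$ to also meet $\operatorname{im}(n)$. When $\operatorname{rank}(n)=1$ a single explicit diagonal $1$-parameter subgroup (giving $T$ weight $-8$, the $\operatorname{im}(n)$ line weight $+8$, and $M$ weight $0$) has a limit as $t\to 0$, contradicting stability; for higher rank the naive $1$-parameter subgroup does not obviously work, and one must either extract the stronger constraints coming from the full equality $n\cdot_1\gamma=n\cdot_2\gamma=n\cdot_3\gamma$ to force $\gamma$ into a destabilizing configuration $\bigwedge^3 U+\bigwedge^2 U\otimes(V_9/U)$ as in Proposition~\ref{prop:non-stable-hyper}, or simply invoke Proposition~\ref{prop:S-equiv}: the assertion is $\GL_9$-equivariant and every stable $\gamma$ is projectively equivalent to some $\gamma_c$ with $C_c$ smooth, so it is enough to verify $L(\gamma_c)=\bk\cdot\mathrm{id}$, a finite linear-algebra computation for which the explicit description of $\ker\Phi$ in the proof of Proposition~\ref{prop:c-curve} is well suited. (As a sanity check, one can instead argue through the theta group $H$ attached to the stable $\gamma$: $W_\gamma$ is $H$-stable and isomorphic to $V_9^{*}$, comultiplication identifies $(\bigwedge^3 V_9)^{H}$ with $\hom_H(V_9^{*},\bigwedge^2 V_9)$, and $[f]\mapsto\operatorname{image}(f)$ is injective on $\bP\hom_H(V_9^{*},\bigwedge^2 V_9)$ by Schur's lemma; this recovers $\gamma$ cleanly among $H$-semiinvariants, leaving exactly the same residual point, namely that there is no non-$H$-semiinvariant competitor.)
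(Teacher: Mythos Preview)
Your reformulation via $L(\gamma)=\{g\in\End(V_9):g\cdot_1\gamma\in\bigwedge^3 V_9\}$ is correct, and the semisimple analysis is clean. The nilpotent case, however, is not finished: you handle rank $1$ explicitly and then, for higher rank, defer to either unspecified ``stronger constraints'' or an unperformed computation on $\gamma_c$. That is a genuine gap in the argument as written. (Incidentally, already in rank $1$ the constraint is stronger than you state: with $n=e_1e_9^*$, the antisymmetry condition $\iota_\nu\iota_{n^*\nu'}\gamma=\iota_{n^*\nu}\iota_{\nu'}\gamma$ applied with $\nu\in\langle e_2^*,\dots,e_9^*\rangle$ and $\nu'=e_1^*$ gives $\iota_{e_i^*}\iota_{e_9^*}\gamma=0$ for all $i>1$, whence $\iota_{e_9^*}\gamma=0$, not merely ``every monomial meeting $T$ also meets $\operatorname{im}(n)$''. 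A similar but more intricate analysis is needed for higher rank, and you have not carried it out; the Heisenberg sketch you offer as a sanity check has, as you note, the same residual lacuna.)

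The paper's argument is quite different and sidesteps $L(\gamma)$ entirely. Since $\gamma$ is stable, the rank $\le 6$ locus in $\bP(W_\gamma)$ is a cubic hypersurface (Lemma~\ref{lem:coble}), so a generic $w\in W_\gamma$ has rank $8$ and its kernel is a line in $V_9^*$. This defines a rational map $\rho\colon\bP(W_\gamma)\dashrightarrow\bP(V_9^*)$ that depends only on the subspace $W_\gamma$. But if $w=\iota_\nu\gamma$ then $\nu\in\ker w$, so $\rho$ is the projectivization of the inverse of the contraction isomorphism $\mu_\gamma\colon V_9^*\xrightarrow{\sim}W_\gamma$. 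A linear isomorphism is determined up to scalar by its projectivization (pick ten rank-$8$ points with any nine linearly independent; nine fix a basis and the tenth pins down the scalars), so $\mu_\gamma$---and hence $\gamma$---is recovered up to scalar. This is shorter, constructive, and uniform in the characteristic.
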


\begin{proof}
We represent this space as an injective map $f \colon W_\gamma \to \bigwedge^2 V_9$. Since $\gamma$ is stable, the locus $Y(\gamma) = \{x \in \bP(W_\gamma) \mid \rank f(x) \le 6\}$ is a cubic hypersurface (Lemma~\ref{lem:coble}), and so the generic element in $W_\gamma$ has rank $8$, and hence its kernel is a line in $V_9^*$. This gives a rational map $\rho \colon \bP(W_\gamma) \dashrightarrow \bP(V_9^*)$. Furthermore, $\rho$ is the projectivization of a linear map $\phi \colon W_\gamma \to V_9^*$ since there exists an identification $W_\gamma = V_9^*$ coming from $\gamma \colon V_9^* \to \bigwedge^2 V_9$. This linear map is unique up to scalar multiple, and our goal is to reconstruct it from $W_\gamma$.

Pick $10$ elements in $W_\gamma$ with rank $8$ such that any $9$ of them are linearly independent. Pick a basis $e_1,\dots,e_9$ for $W_\gamma$. Up to projective equivalence, we may assume that the points are the projectivizations of $e_1,\dots,e_9, e_1+\cdots+e_9$. For $i=1,\dots,9$, choose $x_i \in \rho(e_i)$ such that $x_1+\cdots+x_9 \in \rho(e_1+\cdots+e_9)$. This can be used to define a linear map $\phi' \colon W_\gamma \to V_9^*$ which is well-defined up to a global choice of scalar. In particular, there must be scalars $\alpha_i$ such that $\phi'(e_i) = \alpha_i \phi(e_i)$ for $i=1,\dots,9$. However, $\phi'(e_1+\cdots+e_9) = \alpha_1 \phi(e_1) + \cdots + \alpha_9 \phi(e_9)$, and it must generate the same line as $\phi(e_1+\cdots+e_9)$, so we conclude that $\alpha_1=\cdots=\alpha_9$ and hence $\phi$ and $\phi'$ agree up to scalar multiple.
\end{proof}

\begin{proposition} \label{prop:same-orbit}
Pick stable elements $\gamma, \gamma' \in \bigwedge^3 V_9$ with trivial cohomology class, i.e., $[\psi] = [\psi'] = 0$. Let $(C,P)$ and $(C',P')$ be the marked curves constructed in \S\ref{sec:3covers} and assume that there is an isomorphism $(C,P) \cong (C',P')$ defined over $\bk$. Then the lines spanned by $\gamma$ and $\gamma'$ are in the same $\PGL(V_9)$-orbit.
\end{proposition}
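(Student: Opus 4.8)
The plan is to show that each of $\gamma$ and $\gamma'$ is $\PGL(V_9)$-equivalent to $\gamma_{(C,P)}$, respectively $\gamma_{(C',P')}$, after the canonical identification of $V_9$ with the relevant space $\rH^0(\rJ^1,3\Theta)$, and then to move $\gamma_{(C,P)}$ to $\gamma_{(C',P')}$ using the given isomorphism of curves.

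\textbf{Rigidification.} Fix $\gamma$ and run the construction of \S\ref{sec:3covers}, producing $(C,P)$ together with $X(\gamma)\subset\bP(V_9^*)$. Recall that $\psi\colon X(\gamma)\to\rJ(C)$ factors as $X(\gamma)\cong X^1\xrightarrow{\text{cube}} X^3\cong\rJ(C)$, with $\cO_X(1)\in X^3$ corresponding to $0$. Since $[\psi]=0$, the $\rJ(C)[3]$-torsor $\psi^{-1}(0)$ is trivial, so there is a $\bk$-rational $\cL_0\in X^1$ with $\cL_0^{\otimes 3}\cong\cO_X(1)$. Its theta divisor $\Theta:=Z(\cL_0)$ is a $\bk$-rational genus $2$ curve, and $\cL_0$ identifies $X(\gamma)$ with $\rJ^1(\Theta)$, sending $\cO_X(1)$ to the $(3,3)$-polarization $3\Theta$. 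Arguing as in Lemma~\ref{lem:ZL} and in the proof of Proposition~\ref{prop:stable-element}(a) — where the triviality of the torsor is precisely what is used — this identification carries $(C,P)$ to $(\Theta,\text{its distinguished Weierstrass point})$, so we may take $C=\Theta$. Thus $V_9=\rH^0(X(\gamma),\cO_X(1))=\rH^0(\rJ^1(C),3\Theta)$, exactly the space on which \S\ref{sec:trivector} builds $\gamma_{(C,P)}$, and the incidence locus $\{(y,z)\in X(\gamma)^2:z\in\bP(\ker\Phi(y))\}$ becomes the Poincar\'e divisor $X_{C,P}$ of \S\ref{sec:trivector}.

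\textbf{Recovering $\gamma$ and transporting.} The space $W_\gamma\subset\bigwedge^2 V_9$ — the image of the map $V_9^*\to\bigwedge^2 V_9$ attached to $\gamma$, which is $9$-dimensional since $\gamma$ is stable — consists of the bilinear forms $(y,z)\mapsto\gamma(x,y,z)$, each of which vanishes on $\{(y,z):z\in\ker\Phi(y)\}$ and hence on $X_{C,P}$. So $W_\gamma$ is contained in $W_{\gamma_{(C,P)}}=\rH^0(\rJ(C)^2;\cO(1,1)\otimes\cO(-X_{C,P}))$, which is $9$-dimensional by Lemma~\ref{lem:bilinear-eqn} (the map $V_9\otimes V_9\to\rH^0(\cO(1,1))$ being an isomorphism), so the two coincide; by Lemma~\ref{lem:bilinear-recovery}, $\gamma$ and $\gamma_{(C,P)}$ are recovered up to scalar from this common space, hence span the same line. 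The identical argument applied to $\gamma'$ gives that $\gamma'$ and $\gamma_{(C',P')}$ span the same line after the corresponding identification $V_9=\rH^0(\rJ^1(C'),3\Theta')$. Finally, the given isomorphism $(C,P)\cong(C',P')$ over $\bk$ is functorial through the construction of \S\ref{sec:trivector} (the space $\rH^0(\rJ^1,3\Theta)$, the Heisenberg action, the bilinear forms, and hence $\gamma_{(C,P)}$ are all canonically attached to the marked curve), so it induces a linear isomorphism $\rH^0(\rJ^1(C),3\Theta)\cong\rH^0(\rJ^1(C'),3\Theta')$ carrying $\gamma_{(C,P)}$ to a scalar multiple of $\gamma_{(C',P')}$. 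Composing the identifications and projective equivalences above yields an element of $\PGL(V_9)$ taking the line spanned by $\gamma$ to the line spanned by $\gamma'$.

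\textbf{Main obstacle.} The crux is the rigidification step: verifying that, when $[\psi]=0$, the surface-with-polarization-and-marked-point output by \S\ref{sec:3covers} is \emph{literally} the datum $(\rJ^1(C),3\Theta,C,P)$ consumed by \S\ref{sec:trivector}, compatibly with the Heisenberg actions, so that the two systems of $9$ bilinear equations are comparing the same object. This repackages material already used in passing in the proofs of Proposition~\ref{prop:stable-element}(a) and Proposition~\ref{prop:trivial-torsor}, but it must be carried out for an arbitrary stable $\gamma$ rather than merely for the normal-form elements $\gamma_c$. Granting that identification, the remaining steps are a formal consequence of Lemmas~\ref{lem:bilinear-eqn} and~\ref{lem:bilinear-recovery} together with the functoriality of \S\ref{sec:trivector}.
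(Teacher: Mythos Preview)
Your argument is correct and follows essentially the same route as the paper: rigidify $X(\gamma)$ to $\rJ(C)$ using the triviality of $[\psi]$, identify the incidence locus $\{(y,z):z\in\ker\Phi(y)\}$ with the Poincar\'e divisor, and then invoke Lemma~\ref{lem:bilinear-recovery}. The one organizational difference is that you pass through the intermediate element $\gamma_{(C,P)}$ (showing $W_\gamma=W_{\gamma_{(C,P)}}$ via the dimension count of Lemma~\ref{lem:bilinear-eqn}, hence $[\gamma]=[\gamma_{(C,P)}]$, and then transporting via functoriality of \S\ref{sec:trivector}), whereas the paper compares $\gamma$ and $\gamma'$ directly: after the change of coordinates making $X(\gamma)=X(\gamma')$, the Poincar\'e divisors coincide, so $W_\gamma=W_{\gamma'}$, and Lemma~\ref{lem:bilinear-recovery} finishes. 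Your detour has the virtue of making explicit \emph{why} $W_\gamma$ equals the full $9$-dimensional space of bilinear equations on the Poincar\'e divisor (containment plus the dimension match), a point the paper's proof takes for granted; the paper's version is shorter because it avoids naming $\gamma_{(C,P)}$ at all. Your ``main obstacle'' paragraph correctly identifies the only nontrivial step, and as you note it is exactly the argument already run for $\gamma_c$ in the proof of Proposition~\ref{prop:stable-element}(a), now applied to an arbitrary stable $\gamma$ with trivial torsor.
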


\begin{proof}
Via the construction in \S\ref{sec:3covers}, we have torsors $X(\gamma), X(\gamma') \subset \bP(V_9^*)$. Since its cohomology class is trivial, we can find a $\bk$-rational point in the preimage of $0$ under the $3$-covering $X(\gamma) \to \rJ(C)$. Use this point, call it $0$, to identify $X(\gamma)$ with $\rJ(C)$. The construction shows that $X(\gamma)$ has a $\bk$-rational theta divisor $C \subset X(\gamma)$ such that $0 \in X(\gamma)$ is a Weierstrass point on $C$. These remarks also apply to $C' \subset X(\gamma')$. 

The embedding $X(\gamma) \subset \bP(V_9^*)$ can be reconstructed from the data of $(C,P)$. In particular, the isomorphism $(C,P) \cong (C',P')$ that is assumed to exist induces an isomorphism $X(\gamma) \cong X(\gamma')$ that preserves their embeddings into $\bP(V_9^*)$. So, up to a linear change of coordinates for one of the embeddings, we have $X(\gamma) = X(\gamma')$. In particular, there is an identification of their Poincar\'e divisors, which then satisfy the same $9$ bilinear equations, i.e., $W_\gamma = W_{\gamma'}$. Lemma~\ref{lem:bilinear-recovery} implies that $\gamma$ and $\gamma'$ are equal up to scalar multiple after the change of coordinates.
\end{proof}

\begin{theorem} \label{thm:genus2-param}
The construction in \S\ref{sec:3covers} is a bijection between the stable orbits of $\bP(\bigwedge^3 V_9)$ under the action of $\PGL(V_9)$ and the set of $\bk$-isomorphism classes of triples $(C,P,\psi)$ where $C$ is a smooth genus $2$ curve, $P \in C(\bk)$ is a Weierstrass point, and $\psi \in \ker(\rH^1(\bk; \rJ(C)[3]) \to \rH^1(\bk; \PGL(V_9)))$.
\end{theorem}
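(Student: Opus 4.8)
The plan is to construct an explicit inverse to the map of \S\ref{sec:3covers}, building it from the construction of \S\ref{sec:trivector} in the trivial‑torsor case and then using a Galois‑twisting argument to reach an arbitrary class $\psi$. First one checks that the construction of \S\ref{sec:3covers} is well defined on $\PGL(V_9)$‑orbits of stable lines (all of its inputs depend only on that orbit) and that it outputs a triple $(C,P,\psi)$ of the required shape: by Lemma~\ref{lem:kereta} the class $[\psi]$ lies in $\ker\bigl(\rH^1(\bk;\rJ(C)[3])\to\rH^1(\bk;\SL(V_9)/\bmu_3)\bigr)$, and by Lemma~\ref{lem:galois} this kernel coincides with $\ker\bigl(\rH^1(\bk;\rJ(C)[3])\to\rH^1(\bk;\PGL(V_9))\bigr)$, which is exactly the constraint in the theorem. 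So the assignment of the theorem is well defined.

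For surjectivity together with the identity that the construction of \S\ref{sec:3covers} composed with the inverse is the identity: given a triple $(C,P,\psi)$ with $\psi$ in the above kernel, take $V_9=\rH^0(\rJ^1(C);3\Theta)$ (any $9$‑dimensional space will do) and start from the trivector $\gamma_{(C,P)}$ of \S\ref{sec:trivector}, which is stable by Proposition~\ref{prop:stable-element}(a) and which \S\ref{sec:3covers} carries to $(C,P,0)$ by Proposition~\ref{prop:trivial-torsor}. Choose a cocycle $(c_\sigma)$ for $\psi$ valued in the copy of $\rJ(C)[3]$ inside $\PGL(V_9)$ coming from its Heisenberg action on $\bP(V_9^*)$; by the kernel hypothesis $c_\sigma=\bar g^{-1}\cdot{}^\sigma\bar g$ for some $\bar g\in\PGL(V_9)(\ks)$. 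Since $\rJ(C)[3]$ fixes the line $[\gamma_{(C,P)}]$ (Proposition~\ref{prop:stable-element}), the line $\bar g\cdot[\gamma_{(C,P)}]$ is Galois‑stable, hence $\bk$‑rational, and remains stable. Running \S\ref{sec:3covers} on it, $\PGL(V_9)$‑equivariance makes all the output (the rank‑$4$ locus, its Albanese, the $(3,3)$‑polarization, the embedded curve, the Weierstrass point, the torsor) the $\bar g$‑transform of the output for $\gamma_{(C,P)}$; because $\bar g$ and ${}^\sigma\bar g$ differ by $c_\sigma\in\rJ(C)[3]$, which acts on the rank‑$4$ locus by translation and hence trivially on the marked curve inside its Albanese, the pair $(C,P)$ still descends over $\bk$, while the torsor is the $(c_\sigma)$‑twist of the trivial torsor, i.e.\ of class $\psi$. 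Thus $\bar g\cdot[\gamma_{(C,P)}]$ maps to $(C,P,\psi)$, which proves surjectivity.

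For injectivity it suffices to prove a version of Proposition~\ref{prop:same-orbit} that drops the hypothesis that the torsors are trivial: if stable $\gamma,\gamma'\in\bigwedge^3 V_9$ have triples related by an isomorphism over $\bk$, then $[\gamma]$ and $[\gamma']$ are $\PGL(V_9)(\bk)$‑equivalent. The argument runs parallel to the proof of Proposition~\ref{prop:same-orbit}: by Lemma~\ref{lem:gamma-rank}, $X(\gamma)\subset\bP(V_9^*)$ is the rank‑$4$ locus, a $3$‑covering torsor of $\rJ(C)$ of class $\psi$ embedded by the complete linear system of its $(3,3)$‑polarization $\cO_{X(\gamma)}(1)$, carrying a distinguished $\bk$‑rational theta divisor $C$ with marked Weierstrass point $P$; likewise for $\gamma'$. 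Since $X(\gamma)$ and $X(\gamma')$ are $\rJ(C)$‑torsors of the same class $\psi$ with polarizations matched by the isomorphism of triples, there is an isomorphism $X(\gamma)\cong X(\gamma')$ over $\bk$ respecting $(C,P)$ and $\cO(1)$, hence respecting the complete‑linear‑system embeddings up to an element of $\PGL(V_9)(\bk)$; transporting the Poincar\'e divisor then gives $W_\gamma=W_{\gamma'}$ up to $\PGL(V_9)(\bk)$, and Lemma~\ref{lem:bilinear-recovery} yields $[\gamma]=[\gamma']$ up to $\PGL(V_9)(\bk)$. Together with Proposition~\ref{prop:same-curve} (which recovers $(C,P)$ from the orbit) this gives injectivity, hence a bijection; the remaining composite (inverse followed by the construction of \S\ref{sec:3covers} being the identity) is then formal.

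The main obstacle is the Galois/cohomology bookkeeping, concentrated at two points. In the surjectivity step one must verify that the cocycle $(c_\sigma)$ can indeed be taken valued in $\rJ(C)[3]$ rather than merely in the full, non‑abelian stabilizer $\rJ(C)[3]\rtimes\Aut(C,P)$ of Proposition~\ref{prop:stable-element}(b), and that feeding $\bar g\cdot\gamma_{(C,P)}$ back through \S\ref{sec:3covers} returns exactly the class $\psi$ and not some modification of it; this uses precisely the kernel condition and the triviality of the $\rJ(C)[3]$‑translation action on the marked curve inside the Albanese. In the injectivity step the delicate claim is that the isomorphism of $\rJ(C)$‑torsors $X(\gamma)\cong X(\gamma')$ can be chosen over $\bk$ so as to match both the marked curve and the polarization $\cO(1)$; this again reduces to the hypothesis that $\psi$ lies in the relevant kernel, which is what guarantees that the two embedded copies of the Heisenberg group — equivalently, the two twisted spaces of sections — are $\PGL(V_9)(\bk)$‑conjugate. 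All the geometric content is already available in Sections~\ref{sec:invt-theory}--\ref{sec:trivector}; only this accounting remains.
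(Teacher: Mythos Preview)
Your overall strategy---establish the trivial-torsor case via \S\ref{sec:trivector} and then twist---is the same as the paper's, but the packaging of the twisting step differs. The paper does not run surjectivity and injectivity separately by explicit cocycle manipulation and an extended Proposition~\ref{prop:same-orbit}. Instead, having established the bijection for trivial $\psi$, it invokes Proposition~\ref{prop:stable-element}(b) once: since the projective stabilizer of $\gamma_{(C,P)}$ is $\rJ(C)[3]\rtimes\Aut(C,P)$, the standard twisting formalism identifies $\ker\bigl(\rH^1(\bk;\rJ(C)[3]\rtimes\Aut(C,P))\to\rH^1(\bk;\PGL(V_9))\bigr)$ with the set of $\bk$-orbits in the $\ks$-orbit of $[\gamma_{(C,P)}]$, and then projects to $\rH^1(\bk;\Aut(C,P))$ to separate out the $\bk$-form of $(C,P)$; the fibre over the trivial form is exactly $\ker\bigl(\rH^1(\bk;\rJ(C)[3])\to\rH^1(\bk;\PGL(V_9))\bigr)$. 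This handles in one stroke both of the bookkeeping issues you correctly flag at the end (the $\Aut(C,P)$ ambiguity and the matching of torsor data), and it is what the stabilizer computation in Proposition~\ref{prop:stable-element} is there for.

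Your direct injectivity argument has a slip worth noting: you write that $X(\gamma)$ ``carries a distinguished $\bk$-rational theta divisor $C$ with marked Weierstrass point $P$,'' but in \S\ref{sec:3covers} the curve $C$ and the point $P=\cO_X(1)$ are constructed inside $J$, not inside $X(\gamma)$; for nontrivial $\psi$ there is no $\bk$-rational theta divisor on $X(\gamma)$ (the $Z(\cL)$ form a nontrivial $\rJ(C)[3]$-torsor). The argument can be repaired---one matches the $3$-covering maps $X(\gamma)\to J$ and $X(\gamma')\to J$ and then the polarizations---but carrying this out is precisely the ``delicate claim'' you isolate, and the paper's cohomological route sidesteps it entirely.
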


\begin{proof}
Given a smooth genus $2$ curve with Weierstrass point $P$, we have constructed a stable element in $\bP(\bigwedge^3 \rH^0(\rJ(C); 3\Theta)^*)$ in \S\ref{sec:trivector}. If we pick a linear isomorphism $\rH^0(\rJ(C); 3\Theta)^* \cong V_9$, we hence get an element of $\bP(\bigwedge^3 V_9)$. The $\PGL(V_9)$-orbit of this element does not depend on the choice of isomorphism. So we have a well-defined map $\Phi$ from the set of $\bk$-isomorphism classes of $(C,P)$ to $\PGL(V_9)$-orbits in $\bP(\bigwedge^3 V_9)$. Furthermore, by Proposition~\ref{prop:trivial-torsor}, $\Phi(C,P)$ has trivial cohomology class. By Proposition~\ref{prop:same-curve}, this map is injective on $\bk$-isomorphism classes of $(C,P)$.

In \S\ref{sec:3covers}, we constructed a map from $\PGL(V_9)$-orbits of $\bP((\bigwedge^3 V_9)_\st)$ to the set of $\bk$-isomorphism classes of $(C,P)$; let $\Psi$ be the restriction to the orbits with trivial cohomology class. By Proposition~\ref{prop:trivial-torsor}, $\Psi \circ \Phi$ is the identity, so $\Psi$ is surjective. By Proposition~\ref{prop:same-orbit}, $\Psi$ is injective, so $\Phi$ is a bijection between $\bk$-isomorphism classes of marked curves $(C,P)$ and $\PGL(V_9)$-orbits of stable elements in $\bP(\bigwedge^3 V_9)$ with trivial cohomology class.

By Proposition~\ref{prop:stable-element}, the stabilizer of any element in $\Phi(C,P)$ is isomorphic to $\rJ(C)[3] \rtimes \Aut(C,P)$. In particular, 
\[
\ker(\rH^1(\bk; \rJ(C)[3] \rtimes \Aut(C,P)) \to \rH^1(\bk; \PGL(V_9)))
\]
is in bijection with the $\PGL(V_9)$-orbits in $\bP(\bigwedge^3 V_9)$ which are in the same orbit as $\Phi(C,P)$ over a separable closure of $\bk$. Now consider the map 
\[
\rH^1(\bk; \rJ(C)[3] \rtimes \Aut(C,P)) \to \rH^1(\bk; \Aut(C,P)).
\]
The latter group parametrizes $\bk$-forms of $C$, so each such orbit is naturally associated to a $\bk$-form of $C$. In particular, the orbits that correspond to $C$ itself, i.e., $\bk$-forms that are actually isomorphic to $C$ over $\bk$, are in bijection with 
\[
\ker(\rH^1(\bk; \rJ(C)[3]) \to \rH^1(\bk; \PGL(V_9))).
\]
In particular, $\Phi$ extends to a map on triples $(C,P,\psi)$ and gives an isomorphism to all stable $\PGL(V_9)$-orbits in $\bP(\bigwedge^3 V_9)$.
\end{proof}

\begin{corollary}
If $\bk$ is algebraically closed of characteristic different from $3$, then every stable element of $\bigwedge^3 V_9$ is in the standard Cartan subspace up to the action of $G$.
\end{corollary}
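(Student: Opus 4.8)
The plan is to reduce the statement to characteristic $0$, where it is immediate, and then to descend from characteristic $0$ to characteristic $p\neq 3$ by a lifting argument over the discrete valuation ring $R$ combined with the valuative criterion of properness.

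If $\bk$ has characteristic $0$ there is nothing to prove: a stable element has a closed orbit, which therefore meets the standard Cartan subspace $\fh$ by Lemma~\ref{lem:closedorbit}, and by Proposition~\ref{prop:stable-refl} the point of intersection lies off every reflection hyperplane. So assume the characteristic of $\bk$ is $p\neq 3$, fix a stable $u\in\bigwedge^3 V_9$, and lift it to $u_R\in\bigwedge^3 R^9$. The non-stable locus of $\bigwedge^3 V_9$ is closed and defined over $\bZ$ (it is the image of the Grassmannian bundle $\cE$ of Proposition~\ref{prop:non-stable-hyper}(a)), so, since $u$ is stable, the section $u_R$ of $\bigwedge^3 V_R$ avoids it entirely; hence $u_s$ is stable for every point $s$ of $\Spec R$, in particular $u_\bK$ is stable and stays stable over $\ol{\bK}$. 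Because $\fh$ is defined over $\bZ$ and is a direct summand of $\bigwedge^3\bZ^9$ (the monomials $[123],[147],[159],[168]$ occur in only one of the four spanning vectors), the scheme
\[
Z \;=\; \{\,g\in G_R : g\cdot u_R\in\fh_R\,\}
\]
(the zero locus of $g\mapsto g\cdot u_R \bmod \fh_R$) is a closed subscheme of $G_R$. By the characteristic-$0$ case it has a point over $\ol{\bK}$, hence over some finite extension $\bK'$ of $\bK$; replacing $R$ by its integral closure $R'$ in $\bK'$ — still a complete DVR with residue field $\bk$, since $\bk$ is algebraically closed — one gets a point of $Z(\bK')$. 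If $Z$ is proper over $R$, the valuative criterion then produces a point of $Z(R')$, whose reduction modulo the maximal ideal is a point of $Z_\bk$, i.e., since $\bk=\ol{\bk}$, an element $g\in G(\bk)$ with $g\cdot u\in\fh_\bk$; being stable, $g\cdot u$ automatically lies off the reflection hyperplanes by Proposition~\ref{prop:stable-refl}. This is exactly the assertion.

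Everything therefore reduces to proving that $Z$ is proper over $R$, and this is the main obstacle. Each nonempty fibre $Z_s$ is, after a choice of $g_0\in Z_s$ with $v_0:=g_0\cdot u_s\in\fh_s$ (necessarily a stable, hence ``regular'', Cartan element), of the form $\{h g_0 : h\cdot v_0\in\fh_s\}$; this set always contains $N_G(\fh)_s$, and equals it once one knows that $G_s$-conjugate regular Cartan elements are $N_G(\fh)_s$-conjugate and that $\operatorname{Stab}(v_0)$ normalizes $\fh_s$ — standard Vinberg facts in characteristic $0$ (they underlie Lemma~\ref{lem:closedorbit}) whose analogues in characteristic $\neq 3$ need to be supplied. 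Granting them, $Z$ is a torsor under $N_G(\fh)_R$; and $Z_G(\fh)$ is finite (it is contained in the finite stabilizer of any regular element of $\fh$, which exists because $\bk$ is infinite and there are only finitely many reflection hyperplanes), so $N_G(\fh)$ is finite over $\bZ[1/3]$ and, using also flatness of $Z$ over $R$, $Z$ is finite and hence proper over $R$. Equivalently, the input required is that the restriction map $\bk[\bigwedge^3 V_9]^G\to\bk[\fh]^W$ remains module-finite in every characteristic $\neq 3$ — a mild strengthening of the isomorphism of Proposition~\ref{prop:chevalley}, with the delicate cases being the modular characteristics $p=2,5$. A variant repackaging the same difficulty: by Theorem~\ref{thm:genus2-param} (over $\bk=\ol{\bk}$ the cohomological datum is trivial, so stable $\PGL(V_9)$-orbits correspond bijectively to pairs $(C,P)$), it would suffice to show that the stable elements of $\fh$ realize every pair $(C,P)$ under the construction of \S\ref{sec:3covers}; one could try to deduce this by extending that construction to a proper morphism $\bP(\fh)/W\to\ol{\cM}$ onto the moduli space of possibly-singular pairs and matching boundary loci via Proposition~\ref{prop:stable-smooth}, the obstacle now being that the construction must be checked to extend to an honest morphism across the reflection hyperplanes.
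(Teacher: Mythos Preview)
Your main argument has a genuine, acknowledged gap: the properness of $Z$ over $R$ is never established. The fiberwise torsor description is circular---to know that $Z_\bk$ is an $N_G(\fh)_\bk$-torsor one must first know it is nonempty, which is the conclusion---so at best the \emph{generic} fiber is finite, and the issue becomes whether a point of $Z_{\bK'}$ can escape to infinity in $G$ over the special fiber. That is exactly where positive-characteristic Vinberg theory (or the module-finiteness of $\bk[\bigwedge^3 V_9]^G\to\bk[\fh]^W$ you mention) would have to enter, and you do not supply it.

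The paper's route is close to the ``variant'' you sketch at the end, but it runs in the opposite direction and thereby sidesteps every obstacle you list. Rather than show that the construction of \S\ref{sec:3covers}, restricted to $\fh$, surjects onto all pairs $(C,P)$, it uses the \emph{inverse} construction of \S\ref{sec:trivector}: by design $\gamma_{(C,P)}$ is invariant under the Heisenberg group scheme $H$, and the $H$-invariants in $\bigwedge^3 V_9$ form a $4$-dimensional subspace. Over $\ol\bk$ any two such Heisenberg groups inside $\SL_9/\bmu_3$ are conjugate (the weight-$1$ irreducible of dimension $9$ is unique), and for the standard one---translations and dual translations in the $\bP^2_{\bF_3}$ picture of \S\ref{ss:cartan}---the invariant subspace is visibly $\fh$ itself. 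Hence every $\gamma_{(C,P)}$ already lies in a $G$-conjugate of $\fh$, and Theorem~\ref{thm:genus2-param} says every stable orbit over $\ol\bk$ contains some $\gamma_{(C,P)}$. No lifting to characteristic $0$, no analysis of the boundary of $\fh$, and no Vinberg theory beyond what is cited for Lemma~\ref{lem:closedorbit} are needed.
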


\begin{proof}
By the construction in \S\ref{sec:trivector}, every stable element of the form $\gamma_{(C,P)}$ is in the standard Cartan subspace up to the action of $G$. By Theorem~\ref{thm:genus2-param}, they all arise in this way.
\end{proof}

\section{Complements} \label{sec:complements}

\subsection{Selmer groups} \label{sec:selmer}

For this section, suppose that $\bk$ is a global field, and let $B$ be an abelian variety defined over $\bk$. Let $\alpha \in \rH^1(\bk; B[n])$ be a torsor for $B[n]$. We can use this to twist the multiplication by $n$ map $B \xrightarrow{\cdot n} B$ to get $B' \to B$ where $B'$ is a $B$-torsor. We say that $\alpha$ is an element of the \df{$n$-Selmer group} of $B$ if, for all completions $\bk_v$ of $\bk$, the corresponding torsor $B'$ has a $\bk_v$-rational point. We denote this subgroup by $\Sel_n(B) \subset \rH^1(\bk; B[n])$. 

\begin{proposition} \label{prop:selmer}
Let $C$ be a genus $2$ curve with rational Weierstrass point. The $3$-Selmer group $\Sel_3(\rJ(C))$ is contained in $\ker(\rH^1(\bk; \rJ(C)[3]) \to \rH^1(\bk; \SL_9/\bmu_3))$.
\end{proposition}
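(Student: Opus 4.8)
The plan is to show that every $\alpha\in\Sel_3(\rJ(C))$ maps to the trivial class under $\rH^1(\bk;\rJ(C)[3])\to\rH^1(\bk;\PGL(V_9))$; this suffices, because that map factors through $\rH^1(\bk;\SL(V_9)/\bmu_3)$ — via the lift $\rJ(C)[3]\subset\SL(V_9)/\bmu_3$ recalled before Lemma~\ref{lem:galois} followed by the natural surjection $\SL(V_9)/\bmu_3\to\PGL(V_9)$ — and by Lemma~\ref{lem:galois} the kernel of $\rH^1(\bk;\SL(V_9)/\bmu_3)\to\rH^1(\bk;\PGL(V_9))$ is trivial, so any $\alpha$ that dies in $\rH^1(\bk;\PGL(V_9))$ already dies in $\rH^1(\bk;\SL(V_9)/\bmu_3)$.

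Next I would identify the image of $\alpha$ in $\rH^1(\bk;\PGL(V_9))$ geometrically, extending the bookkeeping in the proof of Lemma~\ref{lem:kereta}. Let $\psi\colon X\to\rJ(C)$ be the $3$-covering attached to $\alpha$ — this is exactly the torsor $B'\to\rJ(C)$ (twisted multiplication by $3$) appearing in the definition of $\Sel_3$ — and let $T=\psi^{-1}(0)$, a torsor under $\rJ(C)[3]$. In terms of contracted products, $X\cong T\times^{\rJ(C)[3]}\rJ(C)$ for the translation action of $\rJ(C)[3]$ on $\rJ(C)$ (with $\psi(t,a)=3a$), while the image of $\alpha$ in $\rH^1(\bk;\PGL(V_9))$ is the $\PGL(V_9)$-torsor $T\times^{\rJ(C)[3]}\PGL(V_9)$, i.e.\ the Brauer--Severi variety $S:=T\times^{\rJ(C)[3]}\bP(V_9^*)$, where $\rJ(C)[3]$ acts on $\bP(V_9^*)$ through its embedding in $\PGL(V_9)$. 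Since the $(3,3)$-embedding $\rJ(C)\hookrightarrow\bP(V_9^*)$ intertwines the translation action on the source with this (projective Heisenberg) action on the target, forming the contracted product with $\rJ(C)$ in place of $\bP(V_9^*)$ exhibits $X$ as a closed $\bk$-subvariety of $S$ — the twist of $\rJ(C)\subset\bP(V_9^*)$.

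With this in hand the argument is local-to-global. If $\alpha\in\Sel_3(\rJ(C))$, then $X$ has a $\bk_v$-point for every completion $\bk_v$; since $X\subseteq S$ over $\bk$, the Brauer--Severi variety $S$ has a $\bk_v$-point for every $v$, hence its class in $\mathrm{Br}(\bk)=\rH^2(\bk;\bG_m)$ restricts to $0$ in every $\mathrm{Br}(\bk_v)$ (Ch\^atelet's theorem). By the Albert--Brauer--Hasse--Noether theorem the map $\mathrm{Br}(\bk)\to\bigoplus_v\mathrm{Br}(\bk_v)$ is injective, so the class of $S$ vanishes in $\mathrm{Br}(\bk)$, i.e.\ $S\cong\bP(V_9^*)$; equivalently $\alpha$ maps to the trivial class in $\rH^1(\bk;\PGL(V_9))$, since $\rH^1(\bk;\PGL(V_9))\to\mathrm{Br}(\bk)$ has trivial kernel (as $\rH^1(\bk;\GL(V_9))=1$). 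Combined with the first paragraph, $\alpha\in\ker(\rH^1(\bk;\rJ(C)[3])\to\rH^1(\bk;\SL(V_9)/\bmu_3))$, as desired.

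The one step that needs genuine care is the second paragraph: one must verify that the Brauer--Severi variety classified by $\alpha$ really is the ambient twist into which the twisted covering $X$ closed-immerses — concretely, the $\rJ(C)[3]$-equivariance of $\rJ(C)\hookrightarrow\bP(V_9^*)$ (which is essentially the Heisenberg picture already set up before Lemma~\ref{lem:kereta}) and the resulting agreement of the two contracted-product constructions. Everything else is the standard Hasse principle for Brauer--Severi varieties over a global field together with the bookkeeping supplied by Lemma~\ref{lem:galois}.
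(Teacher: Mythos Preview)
Your proposal is correct and follows essentially the same route as the paper's proof: identify the image of $\alpha$ in $\rH^1(\bk;\PGL(V_9))$ with a Brauer--Severi variety $S$ containing the twist $X$, use the local points coming from the Selmer condition together with the Hasse principle for Brauer--Severi varieties to conclude $S\cong\bP^8$, and then invoke Lemma~\ref{lem:galois}. The paper's proof is terser---it simply asserts the embedding $X\subset S$ and the Hasse principle---whereas you spell out the contracted-product construction and the equivariance of $\rJ(C)\hookrightarrow\bP(V_9^*)$ that justifies it, but the substance is the same.
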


\begin{proof}
Pick $\psi \in \Sel_3(\rJ(C))$. Then $\psi$ gives an embedding $X \subset S$ where $S$ is a Brauer--Severi variety of dimension $8$ and $X$ is the corresponding twist of $\rJ(C)$. By assumption, $X(\bk_v) \ne \emptyset$ for all completions $\bk_v$ of $\bk$. Brauer--Severi varieties satisfy the Hasse principle, so we conclude that $S \cong \bP^8$ and that the image of $\psi$ in $\rH^1(\bk; \PGL(9))$ is trivial. By Lemma~\ref{lem:galois}, its image in $\rH^1(\bk; \SL(9)/\bmu_3)$ is also trivial.
\end{proof}

In particular, triples $(C,P,\psi)$ where $C$ is a genus $2$ curve, $P \in C(\bk)$ is a Weierstrass point, and $\psi \in \Sel_3(\rJ(C))$ are parametrized by certain $\PGL(V_9)$-orbits in $\bP(\bigwedge^3 V_9)$.

\subsection{Ordinary curves}

Let $\bk$ be a field of characteristic $3$. Given a smooth curve of genus $g$, then $|\rJ(C)(\ks)| = 3^r$ where $0 \le r \le g$. The quantity $r$ is the \df{$3$-rank} of the curve. If $r=g$, then $C$ is \df{ordinary}. 

The Lie algebra of type $\rE_8$ has a cubing map $x \mapsto x^{[3]}$ which induces a cubing map $\bigwedge^3 V_9 \to \fsl(V_9)$.

Set $\gamma_0$ to be the principal nilpotent element with all $c_i=0$ in Proposition~\ref{prop:S-equiv}:
\[
\gamma_0 = [267]+[258]+[348]+[169]+[357]+[249]+[178]+[456].
\]

\begin{lemma} \label{lem:lie-algebra-char3}
Let $C$ be the genus $2$ curve associated with a stable element $\gamma \in \bigwedge^3 V_9$. The Lie algebra of the stabilizer of $\gamma$ $($equivalently, the Lie algebra of $\rJ(C)[3])$ is $2$-dimensional, and is spanned by $\gamma^{[3]}$ and $\gamma^{[9]}$.
\end{lemma}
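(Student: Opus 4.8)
The plan is to separate the two claims. The dimension count is soft: since the characteristic $3$ is different from $2$ and $5$, Proposition~\ref{prop:stable-element}(c) identifies $G_\gamma$ with $\rJ(C)[3]$ as group schemes over $\bk$, so $\Lie(G_\gamma)=\Lie(\rJ(C)[3])$; and since $\rJ(C)$ is an abelian surface on whose Lie algebra multiplication by $3$ acts as $0$ in characteristic $3$, we have $\Lie(\rJ(C)[3])=\Lie(\rJ(C))$, which is $2$-dimensional. Moreover $\gamma^{[3]}$ and $\gamma^{[9]}=(\gamma^{[3]})^{[3]}$ lie in $\fsl(V_9)$: the $\bZ/3$-grading $\fe_8=\fsl(V_9)\oplus\bigwedge^3V_9\oplus\bigwedge^6V_9$ (degrees $0,1,2$) is compatible with the restricted power in the sense that $(\fe_8)_i^{[3]}\subseteq(\fe_8)_{3i}$ — which is exactly why cubing carries $\bigwedge^3V_9$ into $\fsl(V_9)$ — and moreover $\gamma^{[3]}$ and $\gamma^{[9]}$ annihilate $\gamma$ under the $\fsl(V_9)$-action on $\bigwedge^3V_9$, since in any restricted Lie algebra $[x^{[3]},x]=(\ad x)^3(x)=(\ad x)^2([x,x])=0$ and likewise $[\gamma^{[9]},\gamma]=(\ad\gamma)^9(\gamma)=0$. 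Since the scalar matrices, which form the kernel of $\fsl(V_9)\to\Lie(G)$, also annihilate $\gamma$, the classes of $\gamma^{[3]}$ and $\gamma^{[9]}$ lie in $\Lie(G_\gamma)$.

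Everything then comes down to showing these two classes are linearly independent, after which they span the $2$-dimensional $\Lie(G_\gamma)$. Here is the structure I would use. The elements $\gamma,\gamma^{[3]},\gamma^{[9]},\gamma^{[27]},\dots$ pairwise commute (an element commutes with all of its $[3]$-powers, by the identity above), so they span an abelian restricted subalgebra $\fa\subseteq\fz_{\fe_8}(\gamma)$, whose degree-$0$ part $\fa_0$ is a $[3]$-stable subspace of $\Lie(G_\gamma)$ containing the classes of all $\gamma^{[3^k]}$ with $k\ge 1$. If the class of $\gamma^{[9]}$ were a scalar multiple of that of $\gamma^{[3]}$, then $(\lambda v)^{[3]}=\lambda^3v^{[3]}$ would force every $\gamma^{[3^k]}$ to be one, so $\fa_0$ would be at most $1$-dimensional; hence it is enough to show $\fa_0=\Lie(G_\gamma)$. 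For this I would reduce to a computation: over $\ol{\bk}$ every torsor under $\rJ(C)[3]$ is trivial, so by Theorem~\ref{thm:genus2-param} and Proposition~\ref{prop:stable-smooth} every stable $\gamma$ is, over $\ol{\bk}$, in the $\PGL(V_9)$-orbit of some $\gamma_c$ with $C_c$ smooth; since the formation of $\gamma^{[3]}$ is equivariant for the adjoint action and the independence only depends on the $\PGL(V_9)$-orbit of $[\gamma]$, it suffices to treat $\gamma=\gamma_c$, and one then computes $\gamma_c^{[3]}$ and $\gamma_c^{[9]}$ inside $\fsl_9\subset\fe_8$ from the explicit bracket and $[3]$-operation of $\fe_8$ — using the degeneration to the principal nilpotent $\gamma_0$ to fix the form of the answer — and checks that the $2\times2$ minors witnessing dependence are nonzero for every smooth $C_c$.

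The main obstacle is precisely this last verification. Unlike the dimension count it is not formal: the analysis above shows that the linear independence is equivalent to the class of $\gamma^{[3]}$ not being a $[3]$-eigenvector (modulo scalars) in the $2$-dimensional restricted Lie algebra $\Lie(\rJ(C))$, and this must hold uniformly over all stable $\gamma$ — in particular when $C$ is supersingular or of $3$-rank $1$, where the $[3]$-operation on $\Lie(\rJ(C))$ is degenerate and an unlucky value of $\gamma^{[3]}$ would genuinely break the statement. So either the explicit $\fe_8$-computation has to be carried out carefully enough to cover every smooth $C_c$ (for instance symbolically over the parameter ring, checking that the minors do not vanish modulo the singular-curve locus), or one must exhibit a conceptual reason why $\gamma^{[3]}$ can never be such an eigenvector.
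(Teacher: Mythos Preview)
Your dimension count is correct and in fact tidier than the paper's: you get $\dim\Lie(G_\gamma)=2$ directly from $G_\gamma\cong\rJ(C)[3]$ (Proposition~\ref{prop:stable-element}(c), valid since $\ch\bk=3\ne 2,5$) together with $\Lie(\rJ(C)[3])=\Lie(\rJ(C))$, whereas the paper sandwiches the dimension between an upper bound from $\gamma_0$ and a lower bound from the generic (ordinary) case. Your argument that the classes of $\gamma^{[3]},\gamma^{[9]}$ lie in $\Lie(G_\gamma)$ is also fine, and you have correctly isolated the linear independence as the substantive point.

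Where your proposal differs from the paper is in how to close that gap. You propose either a symbolic computation over the whole $\gamma_c$-family or an unspecified conceptual reason. The paper supplies exactly such a conceptual reason, and it hinges on the one ingredient you do not use: the full height $\bZ$-grading on $\fe_8$ (refining the $\bZ/3$-grading), in which $\gamma_0$ is the sum of the simple root vectors, i.e., a principal nilpotent. Springer's description of $\ker(\ad\gamma_0)$ degree by degree \cite[Theorem~2.6]{springer} shows that in characteristic~$3$ the extra kernel elements beyond degrees $-1,0$ occur in height-degrees $3,9,-4,-10$; reducing modulo $3$, only degrees $3$ and $9$ land in $\fsl(V_9)$, and those elements are $\gamma_0^{[3]}$ and $\gamma_0^{[9]}$. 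Because they live in \emph{distinct} height-degrees they are automatically linearly independent, and the degree-$0$ piece contributes nothing (the relevant structure constants are $\pm 2$). This single computation at $\gamma_0$ then propagates to every stable $\gamma$: the condition ``$\gamma^{[3]},\gamma^{[9]}$ linearly dependent'' is closed and $\GL(V_9)$-invariant (adjoint equivariance of $(\cdot)^{[3]}$), and the $1$-parameter subgroup of Proposition~\ref{prop:homogeneity} puts $\gamma_0$ in the orbit closure of every $\gamma_c$, so dependence at any $\gamma_c$ would force dependence at $\gamma_0$.

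So your plan is sound but incomplete; the missing idea is that checking independence at the single point $\gamma_0$ suffices, and that the height grading makes that check immediate. No symbolic verification over the parameter ring is required.
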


\begin{proof}
Consider the height grading on $\fe_8$ discussed in \S\ref{ss:cartan}. The principal nilpotent element in $\fe_8$ restricts to $\gamma_0$ and \cite[Theorem 2.6]{springer} shows that, outside of degrees $-1,0$, multiplication by $\gamma_0$ has a single kernel element in characteristic $3$ in degrees $3,9,-4,-10$. Reducing these degrees modulo $3$, we see that $\ker(\ad \gamma_0 \cap \fsl(V_9))$ has two elements coming from degrees $3$ and $9$, which are $\gamma_0^{[3]}$ and $\gamma_0^{[9]}$, together with whatever comes from degree $0$. However, the latter is $0$ since the structure constants of the Lie algebra $\fe_8$ are all $\pm 2$.

By semicontinuity, the Lie algebra of $\rJ(C)[3]$ coming from $\gamma$ is at most $2$-dimensional. However, a generic $\gamma$ (for example, take a stable element in the Cartan subspace) comes from an ordinary curve, in which case the Lie algebra is $2$-dimensional, so the dimension is always $2$, and agrees with the $\ge 2$-dimensional span of $\gamma^{[3]}$ and $\gamma^{[9]}$.
\end{proof}

\begin{corollary}
Pick a stable element $\gamma \in \bigwedge^3 V_9$. The $3$-rank of the associated curve is:
\begin{enumerate}[\indent \rm (a)]
\item $2$ if $\gamma^{[3]}$ is semisimple,
\item $1$ if $\gamma^{[3]}$ is not semisimple, but $\gamma^{[9]}$ is semisimple,
\item $0$ if neither $\gamma^{[3]}$ nor $\gamma^{[9]}$ is semisimple.
\end{enumerate}
\end{corollary}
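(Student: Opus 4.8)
The plan is to read off the $3$-rank from the restricted-Lie-algebra structure of $\fg:=\Lie(\rJ(C)[3])$. By Lemma~\ref{lem:lie-algebra-char3}, $\fg$ is $2$-dimensional and spanned by $\gamma^{[3]}$ and $\gamma^{[9]}$; moreover $\fg$ sits inside $\fsl(V_9)\subset\fe_8$ and the cubing (restricted $[3]$-) operation of $\fe_8$ restricts to the $[3]$-operation $\phi$ of $\fg$, so that $\gamma^{[9]}=\phi(\gamma^{[3]})$. Since the $3$-rank and semisimplicity of elements of $\fsl(V_9)$ are unchanged by passing to $\ks$, I would work over $\ks$ throughout.

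First I would recall the structure of $\rJ(C)[3]$ over the perfect field $\ks$. Writing $r$ for the $3$-rank, the connected--\'etale sequence together with Dieudonn\'e theory gives $\rJ(C)[3]\cong(\bZ/3)^r\times\bmu_3^r\times L$, where $L$ is local-local of order $3^{2(2-r)}$. Taking Lie algebras and using $\dim\Lie(\rJ(C)[3])=\dim\rJ(C)=2$, one gets a $\phi$-stable decomposition $\fg=\fg_{\mathrm m}\oplus\fg_{\mathrm u}$ with $\fg_{\mathrm m}=\Lie(\bmu_3^r)$ of dimension $r$ and $\fg_{\mathrm u}=\Lie(L)$ of dimension $2-r$. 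Here $\fg_{\mathrm m}$ is toral, hence consists of semisimple elements of $\fsl(V_9)$, and $\phi$ is bijective on it; while $L$ is unipotent, so $\fg_{\mathrm u}$ consists of nilpotent elements and $\phi$ acts nilpotently on it. Consequently, for $x\in\fg$ with decomposition $x=x_{\mathrm m}+x_{\mathrm u}$, the summands commute, $x_{\mathrm m}$ is semisimple and $x_{\mathrm u}$ nilpotent, so this is the additive Jordan decomposition in $\fsl(V_9)$; in particular $x$ is semisimple if and only if $x\in\fg_{\mathrm m}$.

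Then I would split into the three cases. If $r=2$ then $\fg=\fg_{\mathrm m}$, so $\gamma^{[3]}$ is semisimple. If $r=1$ then $\dim\fg_{\mathrm m}=\dim\fg_{\mathrm u}=1$ and $\phi$ kills $\fg_{\mathrm u}$; since $\gamma^{[3]}$ and $\phi(\gamma^{[3]})$ span the $2$-dimensional $\fg$, the element $\gamma^{[3]}$ cannot lie in $\fg_{\mathrm m}$ (else that span would too), so $\gamma^{[3]}$ is not semisimple, whereas $\gamma^{[9]}=\phi(\gamma^{[3]})=\phi\big((\gamma^{[3]})_{\mathrm m}\big)\in\fg_{\mathrm m}$ is semisimple. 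If $r=0$ then $\fg=\fg_{\mathrm u}$ with $\phi$ nilpotent and $\phi^2=0$; since $\gamma^{[3]}$ and $\phi(\gamma^{[3]})$ span $\fg$, both are nonzero, hence nonzero nilpotent, hence not semisimple. As $r\in\{0,1,2\}$ is exhaustive and the three resulting conclusions are mutually exclusive, the equivalences asserted in the corollary follow.

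The main obstacle is the structural input of the second paragraph: matching the multiplicative, \'etale, and local-local constituents of $\rJ(C)[3]$ with, respectively, the semisimple part of $\fg$, the zero part, and the nilpotent part, and verifying that the $[3]$-operation is bijective on the first and nilpotent on the last. All of this is standard structure theory of commutative finite group schemes over a perfect field, but it must be assembled carefully, together with the (implicit) compatibility in Lemma~\ref{lem:lie-algebra-char3} between the abstract $[3]$-operation on $\fg$ and the cubing map inherited from $\fe_8$. Once that is in place, the remainder is just bookkeeping in a $2$-dimensional space.
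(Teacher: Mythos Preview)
Your proposal is correct and follows essentially the same route as the paper: both reduce the question to the structure of $\Lie(\rJ(C)[3])$ as a $2$-dimensional restricted Lie algebra, identifying the $3$-rank with the dimension of its semisimple (toral) part. The paper's proof is a terse two-sentence sketch that invokes the Weil pairing to get $\bmu_3^r\subset\rJ(C)[3]$ and then declares the $3$-rank equal to the dimension of the largest semisimple subalgebra, leaving the case analysis implicit; you supply the full connected--\'etale/local-local decomposition and carry out the case-by-case argument explicitly, which is a welcome elaboration rather than a different idea.
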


\begin{proof}
The Weil pairing shows that $\rJ(C)[3]^\vee \cong \rJ(C)[3]$. In particular,  the $3$-rank $r$ appears in the reduced quotient $(\bZ/3)^r$ of $\rJ(C)[3]$ and hence appears in the largest diagonalizable subgroup $\bmu_3^r \subset \rJ(C)[3]$. So the $3$-rank of the curve $C$ is the dimension of the largest semisimple subalgebra of the Lie algebra of $\rJ(C)[3]$. 
\end{proof}

\begin{remark}
We can write our curve $C$ in Weierstrass normal form
\[
y^2 = x^5 + c_{12} x^3 + c_{18} x^2 + c_{24} x + c_{30}.
\]
According to \cite[Lemma 2.2]{elkin-pries}, the $3$-rank of $C$ is:
\[
\begin{cases}
2 & \text{if $c_{24}\ne 0$},\\
1 & \text{if $c_{24}=0,\ c_{18}\ne 0$},\\
0 & \text{if $c_{24}=c_{18}=0$.} 
\end{cases} 
\]
Furthermore, by Lemma~\ref{lem:lie-algebra-char3}, we know that $\gamma^{[27]}$ is a linear combination of $\gamma^{[3]}$ and $\gamma^{[9]}$; in Weierstrass normal form, a computer calculation shows that
\[
\gamma^{[27]} = c_{24} \gamma^{[3]} - c_{18} \gamma^{[9]}.  \qedhere
\]
\end{remark}

\subsection{Model for $3$-torsion}

Let $\gamma \in \bigwedge^3 V_9$ be a stable vector. By Proposition~\ref{prop:stable-element}, the stabilizer of $[\gamma] \in \bP(\bigwedge^3 V_9)$ in $\SL(V_9)/\bmu_3$ is isomorphic to $\rJ(C)[3] \rtimes \Aut(C,P)$ where $(C,P)$ is the marked curve associated to $\gamma$, and there is also an associated torsor of $\rJ(C)[3]$. Here is a more direct construction for this torsor.

The split Lie algebra of type $\rE_8$ has a graded direct sum decomposition
\[
\fsl(V_9) \oplus \bigwedge^3 V_9 \oplus \bigwedge^6 V_9 .
\]
Pick a flag of subspaces $F_1 \subset F_3 \subset F_6 \subset F_8 \subset V_9$ (the subscripts indicate the dimension of the subspace). Via the embedding $\Flag(1,8;V_9) \subset \bP(\fsl(V_9))$, the subspaces $F_1 \subset F_8$ determine (up to scalar multiple) an element $v_0 \in \fsl(V_9)$, $F_3$ determines an element $v_1 \in \bigwedge^3 V_9$, and $F_6$ determines an element $v_2 \in \bigwedge^6 V_9$. We say that $F_\bullet$ is \df{compatible with $\gamma$} if:
\begin{enumerate}[\indent \rm (a)]
\item $[v_0, \gamma] \in \bigwedge^3 V_9$ is a scalar multiple of $v_1$,
\item $[v_1, \gamma] \in \bigwedge^6 V_9$ is a scalar multiple of $v_2$, and
\item $[v_2, \gamma] \in \fsl(V_9)$ contains $F_6$ in its kernel and its image is contained in $F_1$.
\end{enumerate}

The conditions above are algebraic, so determines a subscheme $F(\gamma)$ of compatible flags. To be precise, let $F_\bullet$ be the standard flag defined by $F_i = \langle e_1, \dots, e_i\rangle$. If it is compatible with $\gamma$, then it implies that the coefficient of $e_i \wedge e_j \wedge e_k$ vanishes where $ijk$ is 
\begin{equation} \label{eqn:monomials}
\begin{split}
ij9, \qquad & 4 \le i<j \le 8;\\
ij9, \qquad & i=2,3;\quad 4 \le j \le 8;\\
i78, \qquad & 2 \le i \le 6;\\
ij7, ij8, \qquad & 4 \le i<j \le 6. 
\end{split}
\end{equation}
Let $P$ be the stabilizer in $\GL(V_9)$ of the standard flag $F_1 \subset F_3 \subset F_6 \subset F_8$. The span of the monomials which are not listed above forms a $P$-submodule of $\bigwedge^3 V_9$, and via algebraic induction from $P$ to $\GL(V_9)$, we get a subbundle $\xi \subset \bigwedge^3 V_9 \times \Flag(1,3,6,8; V_9)$. Hence, $\gamma$ is a section of the quotient bundle $\eta$, which is of rank $31$, and $F(\gamma)$ is the zero locus of this section, and this can be used to define it as a scheme. 

Define a $\GL(V_9)$-equivariant map $\pi \colon \Flag(1,3,6,8; V_9) \to \bP(V_9^*)$ by sending $F_\bullet$ to the annihilator of $F_8$ in $V_9^*$.

\begin{lemma} \label{lem:pi-bij}
$\pi(F(\gamma))$ is the underlying set of the torsor for $\rJ(C)[3]$ constructed previously. In fact, $\pi$ gives a bijection between the underlying sets.
\end{lemma}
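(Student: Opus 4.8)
The plan is to identify $\pi(F(\gamma))$ with the torsor $\psi^{-1}(0) \subset X^1$ (the preimage of $\cO_X(1)$ under the cubing map $X^1 \to X^3$) constructed above, by giving an explicit geometric interpretation of the compatibility conditions (a)--(c). First I would reduce to the case $\gamma = \gamma_{(C,P)}$ arising from the construction of \S\ref{sec:trivector}: every stable $\gamma$ is $\PGL(V_9)$-equivalent to such an element after a separable field extension, both $\pi \circ F(-)$ and the torsor $\psi^{-1}(0)$ are $\GL(V_9)$-equivariant, and descent lets us work with a model where we understand the geometry. Alternatively one can work over $\ks$ throughout and descend at the end, since both sides are defined over $\bk$.

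The heart of the argument is to interpret a compatible flag $F_\bullet$ in terms of the linear algebra of $\Phi$. The key observation should be that condition (c), namely that $[v_2,\gamma]$ kills $F_6$ and has image in $F_1$, together with the restriction already extracted in \eqref{eqn:monomials}, forces $F_8$ (equivalently its annihilator point $x := \pi(F_\bullet) \in \bP(V_9^*)$) to satisfy $\rank \Phi(x) \le 4$, i.e.\ $x \in X$; here one uses that the $[ij9]$-type vanishing in \eqref{eqn:monomials} is exactly the statement that $e_9$ (the generator of the annihilator of $F_8$, up to the flag structure) lies in the kernel of $\Phi(x)$ and that the relevant $6\times 6$ Pfaffians vanish. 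Then conditions (a) and (b), which say $[v_0,\gamma] \propto v_1$ and $[v_1,\gamma] \propto v_2$, should translate into the statement that the flag $F_1 \subset F_3 \subset F_6$ sits inside $\ker \Phi(x)$ in a way compatible with the induced alternating form, which by the dictionary of \S\ref{sec:3covers} (the map $x \mapsto \bP(\ker\Phi(x)) \cap X$ realizing $X \cong X^1$) pins down the line bundle class in $X^1$; the extra Lie-bracket constraints cut this down from $X^1$ to the $\rJ(C)[3]$-torsor $\psi^{-1}(0)$, i.e.\ they impose that the class cubes to $\cO_X(1)$. Concretely, one can check this for the single element $\gamma_0$ (or a well-chosen $\gamma_c$) by direct computation with the $\fe_8$ bracket in the height grading, as in the proof of Lemma~\ref{lem:lie-algebra-char3}, and then spread out by equivariance: the Weyl group $W$ (or rather $\rJ(C)[3] \rtimes \Aut(C,P)$) acts transitively on $\pi(F(\gamma))$ with the same stabilizer calculation as in Proposition~\ref{prop:stable-element}, so matching one point and counting matches all of them.

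Finally, to get that $\pi$ restricts to a \emph{bijection} $F(\gamma) \to \pi(F(\gamma))$, I would argue that the fiber $\pi^{-1}(x) \cap F(\gamma)$ over a point $x \in X$ is a single reduced point: given $x$, the kernel $\ker \Phi(x)$ is a fixed $5$-dimensional space carrying a rank-$4$ alternating form, and conditions (a)--(c) determine $F_1 \subset F_3 \subset F_6$ uniquely inside it (the radical line, the radical-plus-a-Lagrangian-type data, and the orthogonal), with $F_8$ already determined as the annihilator of $x$; this is a finite, in fact bijective, linear-algebra verification. Dimension-counting on the bundle side — $\eta$ has rank $31$ and $\Flag(1,3,6,8;V_9)$ has dimension $8 + \binom{\cdot}{\cdot}$... concretely $\dim \Flag(1,3,6,8;9) = 8+12+15+8 - (\text{corrections}) = 31$ after the flag-incidence count — shows $F(\gamma)$ is generically finite, and properness plus the torsor structure upgrades this to an everywhere-bijective finite morphism onto the $81$-point torsor.

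The main obstacle I anticipate is the middle step: making the Lie-theoretic conditions (a) and (b) transparently equivalent to the ``cubes to $\cO_X(1)$'' condition on the $X^1$-class. The bracket $[-,-]$ between the three graded pieces $\fsl(V_9)$, $\bigwedge^3 V_9$, $\bigwedge^6 V_9$ of $\fe_8$ is not literally a familiar contraction, so one has to either invoke an explicit presentation of $\fe_8$ in this grading (e.g.\ via the Chevalley construction used implicitly for Lemma~\ref{lem:lie-algebra-char3}) or argue more indirectly that the scheme $F(\gamma)$ is reduced of the right degree and $\pi$ is injective on it, then identify the image with $\psi^{-1}(0)$ by the equivariance-plus-one-point method without ever spelling out the bracket. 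I would pursue the latter, cleaner route, and relegate the explicit $\gamma_0$ computation to a remark.
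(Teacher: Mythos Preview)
Your proposal has the right shape but contains a genuine gap at the key step, plus two subsidiary confusions.

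First, the subsidiary points. You attribute the conclusion $x = \pi(F_\bullet) \in X$ to condition (c); in fact it is condition (a) alone that does this. The bracket $[v_0,\gamma]$ is computed by contracting $\gamma$ with $x$ and then wedging with a generator $u$ of $F_1$; condition (a) says this is a \emph{pure} trivector $u\wedge\ell_1\wedge\ell_2$, which forces $\Phi(x) = \ell_1\wedge\ell_2 + \ell_3\wedge u$ to have rank $\le 4$. Second, your bijection sketch says that $F_1\subset F_3\subset F_6$ are determined ``inside $\ker\Phi(x)$'' as radical/Lagrangian/orthogonal data. This cannot be right as stated: the flag lives in $V_9$ while $\ker\Phi(x)$ lives in $V_9^*$, and in any case $\dim F_6 = 6 > 5 = \dim\ker\Phi(x)$. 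The paper instead recovers $F_1$ from $x$ by a geometric argument (see below), and then $F_3, F_6$ are determined successively by (a) and (b) via the brackets.

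The real gap is the step you yourself flag as the main obstacle: why does a compatible flag land in the torsor $\psi^{-1}(0)$ rather than elsewhere in $X$? The paper's argument is direct and does not go through equivariance or a single-point check. It shows that the hyperplane $H = \{u=0\}$ (where $u$ generates $F_1$) satisfies $H\cap X = \ker\Phi(x)\cap X$ set-theoretically; since the right-hand side is the theta divisor $D$ associated to $x$ under $X\cong X^1$, this says $3D$ is a hyperplane section, i.e.\ the image of $x$ in $X^3$ is $\cO_X(1)$. One containment is immediate from $\Phi(x) = \ell_1\wedge\ell_2 + \ell_3\wedge u$; the reverse is checked (after normalizing the flag) by a computer verification for generic $\gamma$, plus the observation that $H\cap X$ always contains the curve $C$ and so cannot degenerate further without jumping in dimension. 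This hyperplane trick is also what makes the bijection work: $H$ is the \emph{unique} hyperplane with this intersection property (since $X$ spans $\bP^8$), so $F_1$ is recovered from $x$.

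Your proposed workaround via ``equivariance-plus-one-point'' is not obviously viable. Checking a single $\gamma_c$ does not spread to all stable $\gamma$, since over $\ks$ the $G$-orbits are parametrized by isomorphism classes of $(C,P)$ and are not all the same; you would need a deformation argument across the moduli, which you do not supply. And appealing to the degree-$81$ count is circular here: in the paper, the Chern-class computation of $\deg F(\gamma)=81$ uses this lemma to first establish that $F(\gamma)$ is $0$-dimensional.
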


\begin{proof}
Fix a compatible flag $F_\bullet$. Pick nonzero $u \in F_1$ and pick nonzero $x \in V_9^*$ which annihilates $F_8$. The action of $v_0$ on $\gamma$ can be obtained by first contracting $\gamma$ by $x$ and then multiplying by $u$. By assumption, the result is a pure trivector, say equal to $u \wedge \ell_1 \wedge \ell_2$ for $\ell_1, \ell_2 \in V_9$. Let $\Phi(x)$ be the contraction of $\gamma$ by $x$. Then $\Phi(x) = \ell_1 \wedge \ell_2 + \ell_3 \wedge u$ for some $\ell_3 \in V_9$ and so $x \in X(\gamma)$. So $\ker \Phi(x) \cap X$ gives a divisor $D$ on $X$ by \cite[Theorem 3.6]{GS}, and we want to show that $3D$ is the divisor corresponding to $\cO_X(1)$.

To do this, it suffices to show that there is a hyperplane $H \subset \bP(V_9^*)$ such that $H \cap X = \ker \Phi(x) \cap X$ as sets. We claim that this works if $H$ is the zero locus of $u$. It follows from the definition that $\ker \Phi(x) \cap X \subseteq H \cap X$. The correctness of this statement is unaffected if we do a change of basis and if we pass to an algebraic closure of $\bk$. So we do both and assume that $F_\bullet$ is the standard flag. This implies that $\Phi(x) = e_1 \wedge \ell + e_2 \wedge e_3$ where $\ell$ is in the span of $e_2, \dots, e_8$ but is not contained in the span of $e_2$ and $e_3$. In particular, doing a further change of basis using the stabilizer of $F_\bullet$, we may assume that $\ell = e_6$ or $\ell = e_8$. In both cases, we can verify, for generic $\gamma$, using a computer algebra system, that if $y \in H \cap X$, then $y \in \ker \Phi(x)$. The general case follows because $H \cap X$ contains $C$ so cannot possibly degenerate any further unless it increases in dimension (but $X$ is not contained in a hyperplane).

For the last statement, let $x$ be a $\ks$-point in the image of $\pi$. From the proof above, we see that $x$ determines the subspace $F_8$ in the flag. Also, there is a hyperplane $H \subset \bP(V_9^*)$ such that $H \cap X = \ker \Phi(x) \cap X$ as sets. Since $X$ is not contained in a hyperplane, $H$ is unique with this property, and it determines $F_1$. If $F_\bullet$ is a compatible flag, then $F_3$ is determined by $F_1 \subset F_8$ and $F_6$ is determined by $F_3$, so we are done.
\end{proof}

\begin{theorem}
$F(\gamma)$ is a degree $81$ scheme of dimension $0$. In particular, $\pi$ restricts to a $\rJ(C)[3]$-equivariant isomorphism between $F(\gamma)$ and the torsor for $\rJ(C)[3]$, so $F(\gamma)$ is reduced outside of characteristic $3$.
\end{theorem}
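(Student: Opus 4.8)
The plan is to use Lemma~\ref{lem:pi-bij} to see that $F(\gamma)$ is $0$-dimensional and that its degree is a Chern number of $\eta$; to evaluate that Chern number as $81$ by a generic-smoothness argument in characteristic $0$; and then to promote the set-theoretic bijection of Lemma~\ref{lem:pi-bij} to a $\rJ(C)[3]$-equivariant scheme isomorphism using the torsor structure.

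First, $\Flag(1,3,6,8;V_9)$ has successive quotient dimensions $(1,2,3,2,1)$, so $\dim\Flag(1,3,6,8;V_9) = \sum_{i<j}d_id_j = 31 = \rank\eta$. By Lemma~\ref{lem:pi-bij}, $\pi$ restricts to a bijection between the underlying set of $F(\gamma)$ and that of the torsor for $\rJ(C)[3]$, which is finite; hence $F(\gamma)$ is $0$-dimensional. Therefore $\gamma$, viewed as a section of $\eta$, cuts out a scheme of codimension $\rank\eta$, so $F(\gamma)$ is a local complete intersection with $[F(\gamma)] = c_{31}(\eta)\cap[\Flag(1,3,6,8;V_9)]$ in the Chow group, and $\deg F(\gamma) = \int_{\Flag(1,3,6,8;V_9)}c_{31}(\eta)$; since $\Flag(1,3,6,8;V_9)$ and $\eta$ are defined over $\bZ$, this number is independent of $\gamma$ and of $\bk$.

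To evaluate it I would work in characteristic $0$. The incidence scheme $\{(\gamma',F_\bullet) : \gamma'\in\xi_{F_\bullet}\}\subset\bigwedge^3 V_9\times\Flag(1,3,6,8;V_9)$ is exactly the total space of the rank-$53$ bundle $\xi$, hence smooth and irreducible of dimension $53+31 = 84 = \dim\bigwedge^3 V_9$, and the first projection $p$ to $\bigwedge^3 V_9$ is dominant, since its fiber over $\gamma'$ is $F(\gamma')$ and this is nonempty on the dense stable locus by Lemma~\ref{lem:pi-bij}. By generic smoothness $p$ is \'etale over a dense open set, so for a stable $\gamma'$ in that set $F(\gamma')$ is reduced; by Lemma~\ref{lem:pi-bij} (in characteristic $0$ the torsor is reduced of length $81$) it then has exactly $81$ points, so $\deg F(\gamma') = 81$. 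Hence $\int_{\Flag(1,3,6,8;V_9)}c_{31}(\eta) = 81$, and $\deg F(\gamma) = 81$ for every stable $\gamma$ over every field.

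For the identification, note that $\rJ(C)[3]\subset G$ acts on $\fe_8$ preserving the grading \eqref{eqn:e8-decomp} and fixes $\gamma$ (it lies in $\ker\lambda$ by the proof of Proposition~\ref{prop:stable-element}); since the conditions (a)--(c) of being ``compatible with $\gamma$'' are expressed through the brackets $[v_i,\gamma]$ with $v_i$ cut out $\GL(V_9)$-equivariantly from the flag, the section $\gamma$ of the equivariant bundle $\eta$ is $\rJ(C)[3]$-invariant, so $\rJ(C)[3]$ acts on $F(\gamma)$, the restriction $\rho$ of $\pi$ is $\rJ(C)[3]$-equivariant, and (as in the proof of Lemma~\ref{lem:pi-bij}) $\rho$ factors through $X(\gamma)$. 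The $3$-covering $\psi\colon X(\gamma)\to\rJ(C)$ is an $\rJ(C)[3]$-torsor over $\rJ(C)$, so $\rJ(C)[3]$ acts freely on $X(\gamma)$ and hence on $F(\gamma)$ (a point stabilizer in $F(\gamma)$ injects into a point stabilizer in $X(\gamma)$, which is trivial); thus $F(\gamma)\to F(\gamma)/\rJ(C)[3] = \Spec L$ is an $\rJ(C)[3]$-torsor and $\deg F(\gamma) = 81\dim_\bk L$, which with $\deg F(\gamma) = 81$ forces $L = \bk$, so $F(\gamma)$ is itself an $\rJ(C)[3]$-torsor over $\bk$. Finally $\psi\circ\rho$ is $\rJ(C)[3]$-invariant (as $\psi$ is invariant under $\rJ(C)[3]$-translation) and is set-theoretically constant equal to $0$ by Lemma~\ref{lem:pi-bij}, hence is the constant morphism $0$, so $\rho$ factors through $T:=\psi^{-1}(0)$; a $\rJ(C)[3]$-equivariant $\bk$-morphism $F(\gamma)\to T$ between $\rJ(C)[3]$-torsors over $\bk$ is automatically an isomorphism. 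Since $T$ is \'etale over $\bk$ precisely when the characteristic is not $3$, $F(\gamma)$ is reduced there. I expect the one place needing care to be this last step: checking equivariance and freeness of the actions at the scheme level and the scheme-theoretic factorizations of $\rho$ through $X(\gamma)$ and then through $T$, which is where the numerical input $\deg F(\gamma)=81$ is used.
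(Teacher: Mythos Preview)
Your proof is correct and diverges from the paper in two places.

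For the Chern number, the paper computes $c_{31}(\eta)$ directly in the Borel presentation of the Chow ring of $\Flag(1,3,6,8;V_9)$: the bundle $\eta$ is filtered by line bundles indexed by the monomials in \eqref{eqn:monomials}, each with first Chern class $x_i+x_j+x_k$, and the product of these $31$ linear forms is computed (with Macaulay2) to be $81$ times a generator of the top-degree piece. Your generic-smoothness argument bypasses the computer calculation by using Lemma~\ref{lem:pi-bij} to count geometric fibers in characteristic $0$; this is more conceptual, though it leans on the full bijection statement of that lemma rather than just the $0$-dimensionality.

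For the isomorphism with the torsor, the paper is terse: it records only the $\rJ(C)[3]$-equivariance of $\pi$ and lets the degree matching with Lemma~\ref{lem:pi-bij} do the rest. Your argument via freeness of the $\rJ(C)[3]$-action and the identification $F(\gamma)/\rJ(C)[3]=\Spec\bk$ is more careful and, in particular, handles characteristic $3$ uniformly. The step you flag as needing care---the scheme-theoretic factorization of $\rho$ through $X(\gamma)$---does go through: in the standard coordinates, the vanishing of the coefficients listed in \eqref{eqn:monomials} forces $\Phi(e_9^*)=e_1\wedge\ell + c\,e_2\wedge e_3$ for some linear form $\ell$ and scalar $c$, so the $6\times 6$ Pfaffians of $\Phi$ pull back to zero on $F(\gamma)$ as functions, and $\rho$ lands in $X(\gamma)$ scheme-theoretically.
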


\begin{proof} 
Note that $\dim \Flag(1,3,6,8; V_9) = 31 = \rank(\eta)$, and Lemma~\ref{lem:pi-bij} shows that $F(\gamma)$ is $0$-dimensional whenever $\gamma$ is stable. Hence the degree of $F(\gamma)$ can be calculated as the top Chern class of $\eta$, which can be shown to be $81$ as follows. The Borel presentation for the (rational) Chow ring of $\Flag(1,3,6,8; V_9)$ describes it as the subring of $S_1 \times S_2 \times S_3 \times S_2 \times S_1$-invariants inside the quotient ring $\bQ[x_1,\dots,x_9] / I$ where $I$ is generated by all positive degree homogeneous $S_9$-invariants ($S_9$ is the symmetric group on $9$ letters, and acts by permuting the $x_i$; $S_1 \times S_2 \times S_3 \times S_2 \times S_1$ is the subgroup where the first $S_2$ permutes $x_2, x_3$, $S_3$ permutes $x_4,x_5,x_6$, and the second $S_2$ permutes $x_7,x_8$). Over the full flag variety of $V_9$, the bundle $\eta$ is filtered by line bundles, one for each monomial in \eqref{eqn:monomials} and the (rational) Chow ring of the full flag variety is $\bQ[x_1, \dots, x_9] / I$. In the Borel presentation, the Chern class of the line bundle corresponding to the monomial $ijk$ is represented by $x_i + x_j + x_k$. So the top Chern class of $\eta$ is the product of these linear forms, which is $81 m$ modulo $I$ where $m$ is a nonzero monomial of degree $31$ (doing this in Macaulay2 \cite{M2}, we get $81 x_2  x_3 x_4^3 x_5^3 x_6^3 x_7^6 x_8^6 x_9^8$).

The $\rJ(C)[3]$-equivariance of $\pi$ comes from the fact that $\rJ(C)[3]$ is a subgroup of the stabilizer of $\gamma$.
\end{proof}

In particular, in every $G(\ks)$-orbit of a point in $\bigwedge^3 V_9$, there is a distinguished $G(\bk)$-orbit corresponding to elements which have a compatible flag defined over $\bk$.

\begin{corollary}
In characteristics different from $2$ and $5$, $\SL(V_9)/\bmu_3$ acts freely on the scheme of pairs $(\gamma, F_\bullet)$ where $\gamma \in \bigwedge^3 V_9$ is stable and $F_\bullet$ is a compatible flag for $\gamma$.
\end{corollary}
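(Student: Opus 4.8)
The plan is to prove that the scheme-theoretic stabilizer in $G=\SL(V_9)/\bmu_3$ of every pair $(\gamma,F_\bullet)$, with $\gamma$ stable and $F_\bullet$ compatible with $\gamma$, is trivial. Concretely, for a $\bk$-scheme $S$ and an $S$-point $(\gamma,F_\bullet)$ of the scheme of pairs I would take $g\in G(S)$ with $g\cdot(\gamma,F_\bullet)=(\gamma,F_\bullet)$ and show $g=e$. The first, essentially formal, reduction is that such a $g$ fixes $\gamma$, hence lies in the stabilizer group scheme $G_\gamma$ of $\gamma$.

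Next I would identify $G_\gamma$. Proposition~\ref{prop:stable-element}(c) says that in characteristic different from $2$ and $5$ the stabilizer of $\gamma_{(C,P)}$ in $G$ is $\rJ(C)[3]$; by Theorem~\ref{thm:genus2-param} every stable $\gamma$ lies in the $G$-orbit (up to rescaling) of some $\gamma_{(C,P)}$, and since the stabilizer of $\gamma$ in $G$ is conjugate to that of any $G$-translate of $\gamma$ and is unaffected by multiplying $\gamma$ by a scalar, we get $G_\gamma\cong \rJ(C)[3]$ for \emph{every} stable $\gamma$, where $(C,P)$ is the marked curve attached to $\gamma$ in \S\ref{sec:3covers}. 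It is important that this is an isomorphism of group schemes, so that the argument stays valid scheme-theoretically, including in characteristic $3$ where $\rJ(C)[3]$ is non-reduced.

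Finally, $g$ also fixes $F_\bullet$, viewed as an $S$-point of the scheme $F(\gamma)$ of flags compatible with $\gamma$, on which $G_\gamma=\rJ(C)[3]$ acts (anything fixing $\gamma$ carries a compatible flag for $\gamma$ to a compatible flag for $\gamma$). By the theorem immediately preceding this corollary, together with Lemma~\ref{lem:pi-bij}, the map $\pi$ is a $\rJ(C)[3]$-equivariant isomorphism of $F(\gamma)$ onto the torsor $\psi^{-1}(0)$ under $\rJ(C)[3]$. The action of a group scheme on a torsor under it is free --- the stabilizer of any point is trivial --- so $g$ must be the identity, as desired.

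I expect the only genuine subtlety to be the reduction in the second paragraph: checking that the stabilizer computation of Proposition~\ref{prop:stable-element}(c), stated for the specific $\gamma_{(C,P)}$ built in \S\ref{sec:trivector}, really does propagate via Theorem~\ref{thm:genus2-param} to all stable $\gamma$ and at the level of group schemes rather than just geometric points. Everything else is formal once one has the preceding theorem, which already does the real work of showing that $F(\gamma)$ is an honest torsor under $\rJ(C)[3]$ and not merely a set-theoretic principal homogeneous space. Note that the hypothesis $\ch \bk \ne 2,5$ is used only to rule out the extra automorphism factor $\Aut(C,P)$ in the stabilizer, which in those characteristics can act with fixed points on $F(\gamma)$; with that factor absent, freeness is automatic.
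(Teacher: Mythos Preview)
Your argument is correct and is exactly the intended deduction: the paper leaves the corollary unproved because it is immediate from the preceding theorem (which identifies $F(\gamma)$ with the $\rJ(C)[3]$-torsor) together with Proposition~\ref{prop:stable-element}(c). The only refinement worth making explicit is that the reduction in your second paragraph should be carried out after base change to $\bar\bk$: Theorem~\ref{thm:genus2-param} does \emph{not} say that every stable $\gamma$ is in the $G(\bk)$-orbit of some $\gamma_{(C,P)}$ (only those with trivial $\psi$ are), but over $\bar\bk$ every torsor trivializes, so $G_\gamma\times\bar\bk\cong\rJ(C)[3]\times\bar\bk$, and since freeness of a finite group scheme action is an fpqc-local condition this suffices.
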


\begin{remark}
If we permute the basis via 974852631, then the family in Proposition~\ref{prop:S-equiv} becomes
\begin{align*}
[348]-[357]+[267]-[189]+[456]+[239]-[147]-[258]\\
+c_3 [345]
-c_6 [234]
+c_9 [127]
-c_{12} [124]
-c_{15} [356]
-c_{18} [236]
+c_{24} [126]
-c_{30} [136]
\end{align*}
and the standard coordinate flag is compatible with the entire family.
\end{remark}

\end{document}